\DeclareMathAlphabet{\mathpzc}{OT1}{pzc}{m}{it}
\newcommand{\cC}{{\mathcal C}}
\newcommand{\sL}{{\mathscr L}}
\newcommand{\pa}{\parallel}
\newcommand{\cd}{\cdot}
\newcommand{\sm}{\setminus}
\newcommand{\li}{l_\infty}
\newcommand{\la}{\lambda}
\newcommand{\C}{\mathcal{C}}
\newcommand{\Cp}{\mathcal{C}_P}
\newcommand{\N}{\mathcal{N}}
\newcommand{\Nh}{\mathcal{N}_H}
\newcommand{\Npp}{\mathcal{N}_{P'}}
\newcommand{\Remarks}{{\bf Remarks.} } 
\newtheorem{thm}{Theorem}[section]
\newtheorem{lem}[thm]{Lemma}
\newtheorem{defn}[thm]{Definition}
\newtheorem{prop}[thm]{Proposition}
\newtheorem{cor}[thm]{Corollary}
\begin{document}

\renewcommand{\thefootnote}{\arabic{footnote}}
 	
\title{Synthetic foundations of cevian geometry, III:\\The generalized orthocenter}

\author{\renewcommand{\thefootnote}{\arabic{footnote}}
Igor Minevich and Patrick Morton\footnotemark[1]}
\footnotetext[1]{The authors were partially supported by an Honors Research Fellowship from the IUPUI Honors Program.}
\maketitle

\begin{section}{Introduction.}
In Part II of this series of papers \cite{mm2} we studied the conic $\Cp=ABCPQ$ and its center $Z$, where $P$ is a point not on the extended sides of triangle $ABC$ or its anticomplementary triangle, and $Q=K(\iota(P))=K(P')$ is the complement of the isotomic conjugate of $P$ ({\it isotomcomplement}\footnote[2]{This is Grinberg's terminology.}, for short) with respect to $ABC$.  When $P=Ge$ is the Gergonne point of $ABC$, $\Cp$ is the Feuerbach hyperbola.  To prove this, we introduce in Section 2 a generalization of the orthocenter $H$ which varies with the point $P$.  \medskip

The generalized orthocenter $H$ of $P$ is defined to be the intersection of the lines through the vertices $A, B, C$, which are parallel, respectively, to the lines $QD, QE, QF$, where $DEF$ is the cevian triangle of $P$ with respect to $ABC$. We prove that the point $H$ always lies on the conic $\Cp$, as does the corresponding generalized orthocenter $H'$ for the point $P'$ (Theorem \ref{thm:HonCp}). Thus, the cevian conic $\Cp$ lies on the nine points
\[A, B, C, P, Q, P', Q', H, H',\]
where $Q'=K(\iota(P'))=K(P)$.  \medskip

In the first two parts \cite{mm1} and \cite{mm2} we used the affine maps $T_P, T_{P'}, \mathcal{S}_1=T_P \circ T_{P'}$, and $\lambda = T_{P'} \circ T_P^{-1}$, where $T_P$ is the unique affine map which takes $ABC$ to $DEF$ and $T_{P'}$ is defined similarly for the point $P'$.  (See Theorems 2.1 and 3.4 of \cite{mm2}.)  In Section 2 of this paper we prove the affine formula
\begin{equation}
\label{eqn:H}
H = K^{-1}T_{P'}^{-1}K(Q)
\end{equation}
for the point $H$ that we defined above and deduce that $H$ and $H'$ are related by $\eta(H) = H'$, where $\eta$ is the affine reflection we made use of in Part II \cite{mm2}. (See Theorem \ref{thm:K(Q)}.) The point $H$ is the anti-complement of the point
\begin{equation}
\label{eqn:O}
O = T_{P'}^{-1}K(Q),
\end{equation}
which is a generalization of the circumcenter. Several facts from Part I \cite{mm1}, including the Quadrilateral Half-turn Theorem, allow us to give a completely synthetic proof of the affine formulas (\ref{eqn:H}) and (\ref{eqn:O}).  We show that the circumscribed conic $\tilde{\C}_O$ of $ABC$ whose center is $O$ is the {\it nine-point conic} (with respect to the line at infinity $l_\infty$) for the quadrangle formed by the point $Q$ and the vertices of the anticevian triangle of $Q$ (for $ABC$).  Furthermore, the complement $K(\tilde{\C}_O)$ is the nine-point conic $\Nh$ of the quadrangle $ABCH$.  When $P=Ge$ is the Gergonne point, $Q=I$ is the incenter, $P'$ is the Nagel point, and (\ref{eqn:H}) and (\ref{eqn:O}) yield affine formulas for the orthocenter and circumcenter as functions of $I$.  \medskip

In Section 3 we study the relationship between the nine-point conic $\Nh$, the circumconic $\tilde{\C}_O$, and the inconic $\mathcal{I}$, which is the conic tangent to the sides of $ABC$ at the traces $D, E, F$ of the point $P$.  Its center is $Q$.  (See [mm1, Theorem 3.9].)  We also study the maps
\[\textsf{M}=T_P \circ K^{-1} \circ T_{P'} \ \ \textrm{and} \ \ \Phi_P=T_P \circ K^{-1} \circ T_{P'} \circ K^{-1}.\]
We show that these maps are homotheties or translations whose centers are the respective points
\[S=OQ \cdot GV = OQ \cdot O'Q' \ \ \textrm{and} \ \ Z = GV \cdot T_P(GV),\]
and use these facts to prove the Generalized Feuerbach Theorem, that $\Nh$ and $\mathcal{I}$ are tangent to each other at the point $Z$.  The proof boils down to the verification that $\Phi_P$ takes $\Nh$ to $\mathcal{I}$, leaving $Z$ and its tangent line to $\Nh$ invariant.  Thus, this proof continues the theme, begun in Part I \cite{mm1}, of characterizing important points as fixed points of explicitly given affine mappings. \medskip

When $\Nh$ is an ellipse, the fact that $\Nh$ and $\mathcal{I}$ are tangent could be proved by mapping $\tilde{\C}_O$ to the circumcircle and $ABC$ to a triangle $A'B'C'$ inscribed in the same circumcircle; and then using the original Feuerbach theorem for $A'B'C'$.  The proof we give does not assume Feuerbach's original theorem, and displays explicitly the intricate affine relationships holding between the various points, lines, and conics that arise in the proof.  (See Figure 2 in Section 3.)  It also applies when $\Nh$ is a parabola or a hyperbola, and when $Z$ is infinite.  (See Figures 3 and 4 in Section 3.  Also see [mo], where a similar proof is used to prove Feuerbach's Theorem in general Hilbert planes.)  \medskip

In Section 4 we determine synthetically the locus of points $P$ for which the generalized orthocenter is a vertex of $ABC$.  This turns out to be the union of three conics minus six points.  (See Figure 5 in Section 4.)  We also consider a special case in which the map $\textsf{M}$ is a translation, so that the circumconic $\tilde{\C}_O$ and the inconic are congruent.  (See Figures 6 and 7.) \medskip

The results of this paper, as well as those in Part IV, could be viewed as results relating to a generalized notion of perpendicularity. The inconic replaces the incircle, and the lines $QD, QE, QF$ replace the perpendicular segments connecting the incenter to the points of contact of the incircle with the sides of $ABC$.  In this way we obtain the generalized incenter $Q$, generalized orthocenter $H$, generalized circumcenter $O$, generalized nine-point center $N$, etc., all of which vary with the point $P$.  Using the polarity induced by the inconic, we have an involution of conjugate points on the line at infinity, unless $P$ lies on the Steiner circumellipse.   Now Coxeter's well-known development (see \cite{co1}, Chapter 9 and \cite{bach}) of Euclidean geometry from affine geometry makes use of an elliptic involution on the line at infinity (i.e., a projectivity $\psi$ from $l_\infty$ to itself without fixed points, such that $\psi(\psi(X)) = X$ for all $X$). This involution is just the involution of perpendicularity: the direction perpendicular to the direction represented by the point $X$ at infinity is represented by $\psi(X)$.  \medskip

Our development is, however, not equivalent to Coxeter's. If $P$ lies inside the Steiner circumellipse, then the inconic is an ellipse and the
involution is elliptic, but if $P$ lies outside the Steiner circumellipse, the inconic is a hyperbola and the involution is hyperbolic. Furthermore, if
$P$ is on the Steiner circumellipse, then $P' = Q = H = O$ is at infinity, the inconic is a parabola, and there is no corresponding involution on the line at infinity.  However, interesting theorems of Euclidean geometry can be proved even in the latter two settings, which cannot be derived by applying an affine map to the standard results, since an affine map will take an elliptic involution on $l_\infty$ to another elliptic involution.

\end{section}

\begin{section}{Affine relationships between $Q, O$, and $H$.}

We continue to consider the affine situation in which $Q$ is the isotomcomplement of $P$ with respect to triangle $ABC$, and $DEF$ is the cevian triangle for $P$ with respect to $ABC$, so that $D=AP \cdot BC$, $E=BP \cdot AC$, and $F=CP \cdot AB$.  As in Parts I and II, $D_0E_0F_0=K(ABC)$ is the medial triangle of $ABC$.

\begin{defn}
The point $O$ for which $OD_0 \pa QD, OE_0 \pa QE$, and $OF_0 \pa QF$ is called the {\bf generalized circumcenter} of the point $P$ with respect to $ABC$.  The point $H$ for which $HA \pa QD, HB \pa QE$, and $HC \pa QF$ is called the {\bf generalized orthocenter} of $P$ with respect to $ABC$.
\end{defn}

We first prove the following affine relationships between $Q, O$, and $H$.

\begin{thm}
\label{thm:HO}
The generalized circumcenter $O$ and generalized orthocenter $H$ exist for any point $P$ not on the extended sides or the anticomplementary triangle of $ABC$, and are given by
$$O=T_{P'}^{-1}K(Q), \ \ H = K^{-1}T_{P'}^{-1}K(Q).$$
\end{thm}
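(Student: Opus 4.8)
The plan is to treat the two formulas in turn: I would derive the formula for $H$ from that for $O$ by a single application of the complement map, and then establish the formula for $O$ by verifying that the explicitly given point meets the three parallelism conditions of the definition. The upshot is that the whole theorem reduces to one genuinely geometric parallelism statement, which is where the synthetic content sits.

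First I would record the elementary but crucial fact that $K$ is the homothety with center the centroid $G$ and ratio $-\tfrac12$, so that it preserves all directions and sends $A,B,C$ to $D_0,E_0,F_0$. Consequently, if $O$ is any point with $OD_0 \parallel QD$, then applying the direction-preserving homothety $K^{-1}$ (for which $K^{-1}(D_0)=A$) shows that the line $K^{-1}(O)A$ is parallel to $OD_0$, hence to $QD$; the same argument handles the other two conditions. Thus $K^{-1}(O)$ satisfies exactly the defining conditions of $H$, so $H=K^{-1}(O)$, and in particular $H$ exists precisely when $O$ does. This reduces the theorem to the single assertion $O=T_{P'}^{-1}K(Q)$, from which $H=K^{-1}T_{P'}^{-1}K(Q)$ follows at once.

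Because $P$ avoids the extended sides and the anticomplementary triangle, the cevian triangle $D'E'F'$ of $P'$ is nondegenerate, so $T_{P'}$ is an invertible affine map and the point $O^{\ast}=T_{P'}^{-1}K(Q)$ is well defined. I would then check that $O^{\ast}$ satisfies the definition of $O$. Since affine maps preserve parallelism, the first condition $O^{\ast}D_0 \parallel QD$ is equivalent to $T_{P'}(O^{\ast})\,T_{P'}(D_0) \parallel T_{P'}(Q)\,T_{P'}(D)$. Here $T_{P'}(O^{\ast})=K(Q)$ by construction, $T_{P'}(D_0)=M_0$ is the midpoint of $E'F'$ (affine maps preserve midpoints), and the key input from Part II, namely the identity $T_{P'}(Q)=P'$ furnished by Theorems 2.1 and 3.4 of \cite{mm2}, turns the requirement into $K(Q)\,M_0 \parallel P'\,T_{P'}(D)$.

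The main obstacle is precisely this last parallelism. I would exploit the fact that $D_0$ is simultaneously the midpoint of $BC$ and, by the isotomic relationship between $P$ and $P'$, the midpoint of the segment $DD'$ joining the two cevian traces; hence the half-turn about $D_0$ interchanges $B\leftrightarrow C$ and $D\leftrightarrow D'$, and $M_0=T_{P'}(D_0)$ is the midpoint of the segment on line $E'F'$ joining $T_{P'}(D)$ and $T_{P'}(D')$. Feeding this midpoint configuration, together with $T_{P'}(Q)=P'$, into the Quadrilateral Half-turn Theorem of Part I \cite{mm1} should produce the required parallelism. Once $O^{\ast}D_0 \parallel QD$ is secured, the remaining two conditions follow from the cyclic symmetry $A,B,C,D,E,F,D_0,E_0,F_0 \to B,C,A,E,F,D,E_0,F_0,D_0$, under which $Q$, $K$, $T_{P'}$, and hence $O^{\ast}$ are all invariant, so no further computation is needed. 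Existence of $O$ is then immediate, since $O^{\ast}$ has been exhibited, and uniqueness follows because any two of the three conditions already force $O$ to be the intersection of two nonparallel lines; the formula for $H$ follows from the reduction in the first step.
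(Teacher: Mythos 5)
Your overall architecture matches the paper's: derive the $H$-formula from the $O$-formula by noting that the dilatation $K^{-1}$ preserves directions and carries $D_0,E_0,F_0$ to $A,B,C$; then verify the $O$-formula by pushing the first parallelism condition forward under $T_{P'}$, using $T_{P'}(T_{P'}^{-1}K(Q))=K(Q)$ and $T_{P'}(Q)=P'$ (which is I, Theorem 3.7, not Part II) to reduce everything to the single assertion $K(Q)\,T_{P'}(D_0)\parallel P'\,T_{P'}(D)$. Up to that point your reduction is correct and is exactly the paper's.

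The gap is that this last parallelism is the entire synthetic content of the theorem, and your proposal does not prove it. Saying that $D_0$ is the midpoint of $DD_3$ and that $T_{P'}(D_0)$ is therefore the midpoint of $T_{P'}(D)T_{P'}(D_3)$, and then ``feeding this into the Quadrilateral Half-turn Theorem,'' does not yield the claim: a midline argument from those data would require $K(Q)$ to be the midpoint of $T_{P'}(Q)T_{P'}(D_3)=P'\,T_{P'}^{2}(A)$, which is not something you have (what one does know is that $K(Q)$ is the midpoint of $P'Q$, and $T_{P'}^{2}(A)\neq Q$ in general). The paper closes the gap with a specific three-link chain: (i) $K(Q)$, $D_0$, and $A_0'=T_{P'}(D_0)$ are collinear (I, Corollary 2.6, identifying $K(Q)$ with the point $M'$ there), so the target line $K(Q)A_0'$ \emph{is} the line $D_0A_0'$; (ii) the Quadrilateral Half-turn Theorem gives $D_0A_0'\parallel AQ$; (iii) the map $\mathcal{S}_2=T_{P'}\circ T_P$ is a homothety or translation (I, Theorem 3.8) which, because $Q$ is a fixed point of $T_P$, sends $A\mapsto T_{P'}(D)$ and $Q\mapsto P'$, hence $AQ\parallel P'\,T_{P'}(D)$. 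Your sketch contains none of ingredients (i) or (iii), and without them the Half-turn Theorem alone does not connect $K(Q)T_{P'}(D_0)$ to $P'T_{P'}(D)$. To complete the proof you would need to supply this chain (or an equivalent one); the remaining parts of your write-up (the symmetry reduction to one condition, existence and uniqueness of $O$ as an intersection of two nonparallel lines) are fine.
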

 \noindent {\bf Remark.} The formula for the point $H$ can also be written as $H=T_L^{-1}(Q)$, where $L=K^{-1}(P')$ and $T_L$ is the map $T_P$ defined for $P=L$ and the anticomplementary triangle of $ABC$. \smallskip
 
\begin{proof}
We will show that the point $\tilde O=T_{P'}^{-1}K(Q)$ satisfies the definition of $O$, namely, that
$$\tilde OD_0 \pa QD, \ \ \tilde OE_0 \pa QE, \ \ \tilde OF_0 \pa QF.$$
It suffices to prove the first relation $\tilde OD_0 \pa QD$.  We have that
$$T_{P'}(\tilde OD_0)=K(Q)T_{P'}(D_0)=K(Q)A_0'$$
and
$$T_{P'}(QD)=P'A_3',$$
by I, Theorem 3.7.  Thus, we just need to prove that $K(Q)A_0' \pa P'A_3'$.  We use the map $\mathcal{S}_2=T_{P'}T_P$ from I, Theorem 3.8, which takes $ABC$ to $A_3'B_3'C_3'$.  We have $\mathcal{S}_2(Q)=T_{P'}T_P(Q)=T_{P'}(Q)=P'$.  Since $\mathcal{S}_2$ is a homothety or translation, this gives that $AQ \pa \mathcal{S}_2(AQ)=A_3'P'$.  Now note that $M'=K(Q)$ in I, Corollary 2.6, so
$$K(Q)A_0'=M'A_0'=D_0A_0',$$
by that corollary.  Now the Quadrilateral Half-turn Theorem (I, Theorem 2.5) implies that $AQ \pa D_0A_0'$, and therefore $P'A_3' \pa K(Q)A_0'$.  This proves the formula for $O$.  To get the formula for $H$, just note that $K^{-1}(OD_0)=K^{-1}(O)A, K^{-1}(OE_0)=K^{-1}(O)B, K^{-1}(OF_0)=K^{-1}(O)C$ are parallel, respectively, to $QD,QE,QF$, since $K$ is a dilatation.  This shows that $K^{-1}(O)$ satisfies the definition of the point $H$.
\end{proof}
\begin{cor} The points $O$ and $H$ are ordinary whenever $P$ is ordinary and does not lie on the Steiner circumellipse $\iota(l_\infty)$.  If $P$ does lie on $\iota(l_\infty)$, then $O=H=Q$.
\end{cor}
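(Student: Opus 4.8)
The plan is to reduce the entire statement to two structural facts and then treat the two cases separately. First I would record that $K$, $K^{-1}$, and $K^2$ are dilatations centered at the centroid, so each of them carries ordinary points to ordinary points and fixes every point of $l_\infty$; in particular, since $Q=K(P')$, the point $Q$ is ordinary exactly when $P'$ is ordinary. Next I would observe that $P$ lies on the Steiner circumellipse $\iota(l_\infty)$ precisely when $P'=\iota(P)$ lies on $l_\infty$, because $\iota$ is an involution. The one genuinely structural input is that, under the standing hypothesis on $P$, the map $T_{P'}$ is a \emph{non-singular} affine transformation: from the coordinate description of the cevian triangle one checks that the cevian triangle of $P'$ collapses only when $P'$ lies on a side line of $ABC$, and $P'$ lies on a side line exactly when $P$ lies on one of the other two side lines, which the hypothesis excludes. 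Hence $T_{P'}^{-1}$ is again a non-singular affine map, and likewise preserves the ordinary/infinite dichotomy and sends $l_\infty$ to $l_\infty$.

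With these preliminaries the ordinary case is immediate: if $P$ is ordinary and not on $\iota(l_\infty)$, then $P'$ is ordinary, so $K(Q)=K^2(P')$ is ordinary, and therefore $O=T_{P'}^{-1}K(Q)$ is ordinary, being the image of an ordinary point under a non-singular affine map; then $H=K^{-1}(O)$ is ordinary as well. For the degenerate case, suppose $P\in\iota(l_\infty)$, so that $P'$ is at infinity. Since $K$ fixes every infinite point, $Q=K(P')=P'$ is the same point of $l_\infty$, and likewise $K(Q)=Q$. I would then invoke the identity $T_{P'}(Q)=P'$ used in the proof of Theorem \ref{thm:HO}: because here $P'=Q$, this says $T_{P'}(Q)=Q$, i.e. $Q$ is a fixed point of $T_{P'}$. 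Consequently $O=T_{P'}^{-1}K(Q)=T_{P'}^{-1}(Q)=Q$, and finally $H=K^{-1}(O)=K^{-1}(Q)=Q$, again because $K^{-1}$ fixes $l_\infty$ pointwise; this gives $O=H=Q$.

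The main obstacle I anticipate is the degenerate case, specifically ensuring that $T_{P'}$ (and with it the borrowed identity $T_{P'}(Q)=P'$) remains valid when $P'$ is at infinity. This is exactly why the non-singularity observation must be established in the sharp form ``degenerate $\iff$ $P'$ on a side line'' rather than the weaker ``degenerate when $P'$ is infinite'': it is precisely the sharp form that guarantees $T_{P'}$ is still a bona fide affine map on the Steiner circumellipse, so that the fixed-point computation $T_{P'}(Q)=Q$ makes sense. Everything else is routine bookkeeping with the dilatations $K$ and $K^{-1}$.
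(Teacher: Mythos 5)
Your proof is correct and follows the same overall structure as the paper's: the ordinary case is dispatched by noting that $K^{\pm 1}$ and $T_{P'}^{-1}$ carry ordinary points to ordinary points, and the degenerate case reduces to showing that the infinite point $Q=K(P')=P'$ is fixed by $T_{P'}^{-1}$ and by $K^{-1}$. The one place you diverge is in how that fixed-point fact is justified: you invoke $T_{P'}(Q)=P'$ (I, Theorem 3.7) together with $P'=Q$, whereas the paper inserts $T_P^{-1}(Q)=Q$ and computes $T_{P'}^{-1}(Q)=T_{P'}^{-1}T_P^{-1}(Q)=\mathcal{S}_1^{-1}(Q)=Q$, using that $\mathcal{S}_1$ is a homothety or translation and hence fixes $l_\infty$ pointwise (I, Theorems 3.2 and 3.8); your route is, if anything, more direct. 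Your preliminary discussion of the non-singularity of $T_{P'}$ is already guaranteed by the standing hypotheses carried over from Parts I--II, and the parenthetical claim that $P'$ lies on a side line exactly when $P$ lies on one of the \emph{other two} side lines should simply read ``exactly when $P$ lies on some side line,'' but neither point affects the validity of the argument.
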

\begin{proof}
If $P$ lies on $\iota(l_\infty)$, then $P'=Q$ is infinite, and since $K$ is a dilatation, we have that $O=H=T_{P'}^{-1}(Q)=T_{P'}^{-1} T_P^{-1}(Q)=\mathcal{S}_1^{-1}(Q)=Q$ by I, Theorems 3.2 and 3.8.
\end{proof}

To better understand the connection between the point $P$ and the points $O$ and $H$ of Theorem \ref{thm:HO} we consider the circumconic $\tilde{\C}_O$ on $ABC$ with center $O$. We will show that this circumconic is related to the nine-point conic $\Npp$ (with respect to $l_\infty$) on the quadrangle $ABCP'$. Recall that this is the conic through the diagonal points and the midpoints of the sides of the quadrangle [co1, p. 84]; alternatively, it is the locus of the centers of conics which lie on the vertices of the quadrangle $ABCP'$. Three of these centers are the points
$$D_3 = AP' \cdot BC, \ \ E_3 = BP' \cdot AC, \ \ F_3 = CP' \cdot AB,$$
which are centers of the degenerate conics
$$AP' \cup BC, \ \ BP' \cup AC, \ \ CP' \cup AB.$$
Since the quadrangle is inscribed in $\Cp$, its center $Z$ lies on $\Npp$.

\begin{figure}
\[\includegraphics[width=5.5in]{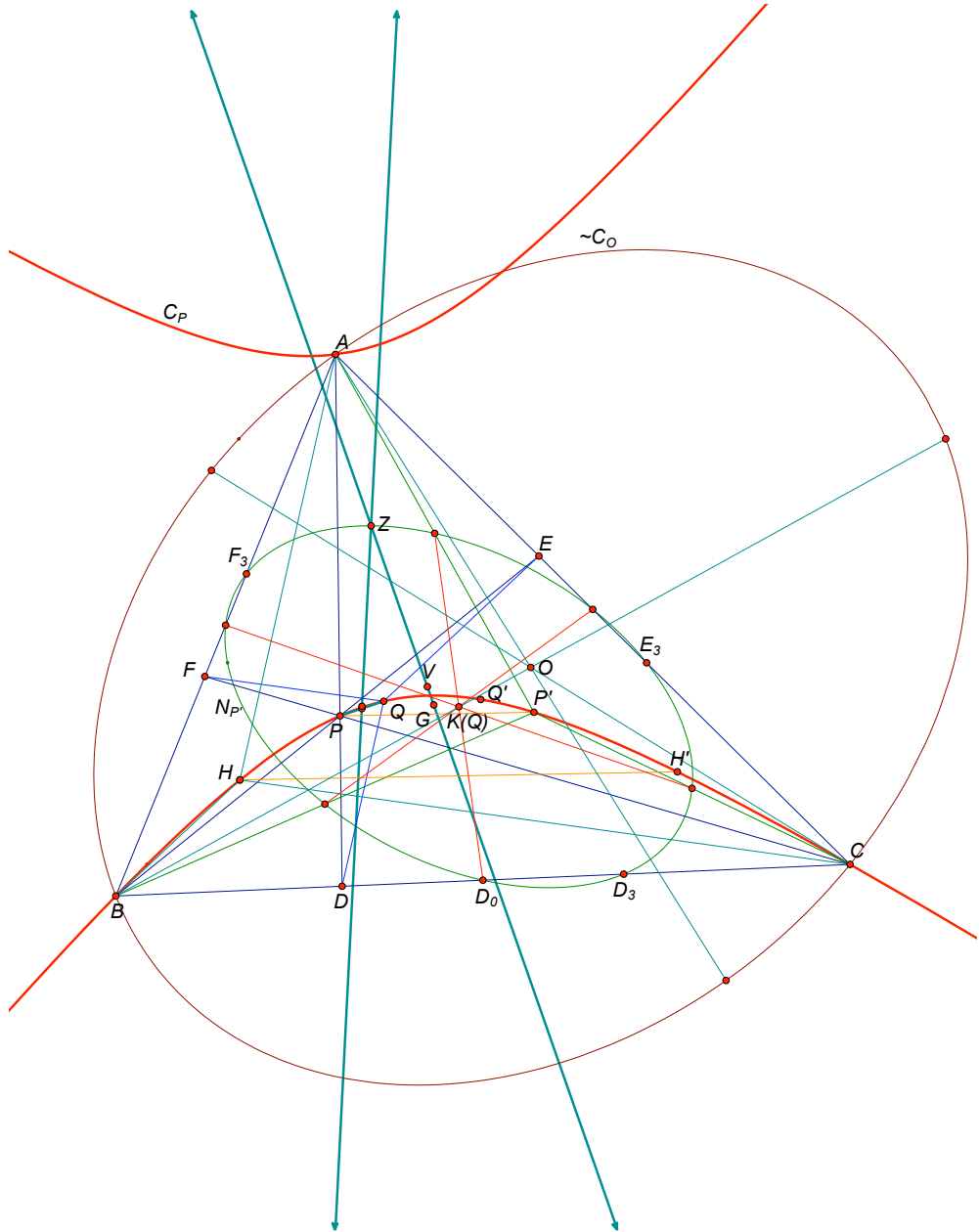}\]
\caption{Circumconic $\tilde{\mathcal{C}}_O$ and Nine-point Conic $\mathcal{N}_{P'}$}
\label{fig:2.1}
\end{figure}

\begin{thm}
\label{thm:K(Q)}
\begin{enumerate}[a)]
\item The point $K(Q)$ is the center of the nine-point conic $\Npp$ (with respect to $l_\infty$) for the quadrangle $ABCP'$.
\item The circumconic $\tilde{\C}_O = T_{P'}^{-1}(\Npp)$ is the nine-point conic for the quadrangle formed by the point $Q$ and the vertices of the anticevian triangle of $Q$. Its center is $O = T_{P'}^{-1}K(Q)$.
\item If $P$ does not lie on a median or on $\iota(l_\infty)$ and $O', H'$ are the points of Theorem 2.2 corresponding to the point $P'$, then $O' = \eta(O)$ and $H' = \eta(H)$. Thus $OO'$ and $HH'$ are parallel to $PP'$.
\end{enumerate}
\end{thm}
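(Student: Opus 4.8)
The plan is to prove the three parts in order, using the affine formula $O = T_{P'}^{-1}K(Q)$ from Theorem \ref{thm:HO} as the common engine. For part (a), I would show that $K(Q)$ is the center of $\Npp$ by exploiting the defining property of the nine-point conic as the locus of centers of conics through $A, B, C, P'$. Three such centers are the points $D_3, E_3, F_3$ listed just before the theorem, and a fourth is the midpoint-type data coming from the degenerate conics. The center of $\Npp$ is itself the center of the conic $\Npp$, so I would instead use the characterization that the nine-point conic passes through the midpoints of the four vertices taken in pairs; in particular it passes through the midpoints of $AP', BP', CP'$ and of the sides of $ABC$. The center of this conic is the common midpoint structure, and I expect the cleanest route is to recognize $\Npp$ as $K$ applied to a conic through the anticomplementary configuration, so that its center is $K$ of the center of that conic. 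Concretely, since $K$ is the complement map (a homothety of ratio $-1/2$ about the centroid $G$), and $Q$ is visibly the center associated to the quadrangle via the relation $Q = K(P')$, I would verify that $K(Q) = K(K(P'))$ sits at the center by checking it is the midpoint of the segment joining the centroid of $ABC$ to an appropriate diagonal point, or equivalently by a direct parallelism argument of the same flavor as the proof of Theorem \ref{thm:HO}.

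For part (b), the strategy is to apply the affine map $T_{P'}^{-1}$ to the conclusion of part (a). Since $T_{P'}$ is affine, it takes nine-point conics to nine-point conics and centers to centers; thus $T_{P'}^{-1}(\Npp)$ is again a nine-point conic, and its center is $T_{P'}^{-1}(K(Q)) = O$, matching formula \eqref{eqn:O}. The substantive point is to identify \emph{which} quadrangle's nine-point conic this is. Here I would track where the four defining points $A, B, C, P'$ go under $T_{P'}^{-1}$. By I, Theorem 3.7 and the computations already used in the proof of Theorem \ref{thm:HO} (where $T_{P'}(QD) = P'A_3'$ and $T_{P'}(Q) = P'$ were invoked), the image of $P'$ under $T_{P'}^{-1}$ is $Q$, and the images of $A, B, C$ should be exactly the vertices of the anticevian triangle of $Q$. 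Establishing this last identification — that $T_{P'}^{-1}(\{A,B,C\})$ is the anticevian triangle of $Q$ — is the key computation, and I would carry it out using the definition of $T_{P'}$ as the unique affine map sending $ABC$ to $D'E'F'$ together with the relation $Q = K(P')$.

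For part (c), I would invoke the affine reflection $\eta$ from Part II, using the fact that $\eta$ interchanges the roles of $P$ and $P'$ (hence of $Q$ and $Q'$, and of $T_P$ and $T_{P'}$). Applying $\eta$ to the formula $O = T_{P'}^{-1}K(Q)$ and using that $\eta$ conjugates $T_{P'}$ to $T_P$ and carries $Q$ to $Q'$, I expect to obtain $\eta(O) = T_{P}^{-1}K(Q') = O'$, which is exactly the circumcenter formula for the point $P'$. The same conjugation applied to $H = K^{-1}O$ gives $\eta(H) = H'$, once I confirm $\eta$ commutes appropriately with $K$ (which it should, since $\eta$ fixes the centroid and the line at infinity setup). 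Finally, to conclude that $OO'$ and $HH'$ are parallel to $PP'$, I would use the structural property of $\eta$ as an affine reflection: a point and its $\eta$-image are joined by a segment in the fixed direction of $\eta$, and this direction is precisely that of $PP'$. The hypothesis that $P$ avoids the medians and $\iota(l_\infty)$ ensures $\eta$ is a genuine (non-degenerate) reflection and that $O, O', H, H'$ are ordinary, so the parallelism is meaningful.

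The main obstacle I anticipate is part (b), specifically pinning down that $T_{P'}^{-1}$ sends $A, B, C$ to the anticevian triangle of $Q$ rather than to some other triangle; this requires carefully matching the affine action of $T_{P'}^{-1}$ against the intrinsic definition of the anticevian triangle, and it is the step where the precise normalizations from Part I (Theorems 3.7 and 3.8) must be deployed correctly. Parts (a) and (c) are more formal: (a) follows from the locus characterization of the nine-point conic combined with a parallelism argument parallel to the one already given for $O$, and (c) is essentially a conjugation computation once the symmetry properties of $\eta$ are in hand.
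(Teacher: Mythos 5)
Your plans for parts (b) and (c) coincide with the paper's proof: (b) is exactly ``apply $T_{P'}^{-1}$, which preserves nine-point conics and centers, and identify the image quadrangle as $Q$ together with its anticevian triangle via I, Corollary 3.11 and $T_{P'}^{-1}(P')=Q$''; (c) is exactly the conjugation computation $\eta T_{P'}^{-1}K(Q)=T_P^{-1}K(\eta(Q))=T_P^{-1}K(Q')=O'$ using II, Theorem 2.4 and the fact that $\eta$ commutes with $K$. Those two parts are fine.

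Part (a) is where you have a genuine gap. You correctly list the nine points of $\Npp$ (the midpoints $D_0,E_0,F_0$, the diagonal points $D_3,E_3,F_3$, and the midpoints $R'_a,R'_b,R'_c$ of $AP',BP',CP'$), but you never commit to an argument that actually certifies $K(Q)$ as the pole of $l_\infty$. The paper's proof hinges on a specific prior fact, I, Corollary 2.6: $K(Q)$ is simultaneously the midpoint of the segments $D_0R'_a$, $E_0R'_b$, $F_0R'_c$, each of which is a chord of $\Npp$. Being the midpoint of two intersecting chords makes $K(Q)$ conjugate to two distinct points of $l_\infty$, hence the pole of $l_\infty$, i.e.\ the center. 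Your substitutes do not do this job: checking that $K(Q)$ is the midpoint of a single segment (e.g.\ one joining $G$ to a diagonal point) cannot identify the center of a conic, and a ``parallelism argument of the same flavor as Theorem 2.2'' proves parallelism of lines, which is not what the center condition asks for. The suggestion to realize $\Npp$ as $K$ of another conic would need you to name that conic and prove $P'$-incidence, which you do not attempt. Finally, the statement of part (a) does not exclude $P$ on $\iota(l_\infty)$, so you must also handle the case $P'$ infinite; the paper does this with a separate tangency argument showing $l_\infty$ meets $\Npp$ only at $Q=K(Q)$, and your proposal is silent on it.
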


\begin{proof}
a) If $P'$ is ordinary, the conic $\Npp$ lies on the midpoints $D_0, E_0, F_0$, the vertices $D_3, E_3, F_3$, and the midpoints $R'_a, R'_b, R'_c$ of $AP', BP'$, and $CP'$. By I, Corollary 2.6, the point $K(Q)$ is the midpoint of segments $D_0R'_a, E_0R'_b, F_0R'_c$, which are chords for the conic $\Npp$. As the midpoint of two intersecting chords, $K(Q)$ is conjugate to two different points at infinity (with respect to the polarity associated with $\Npp$), and so must be the pole of the line at infinity. If $P'$ is infinite, the nine-point conic $\Npp$ lies on the midpoints $D_0, E_0, F_0$, the vertices $D_3, E_3, F_3$, and the harmonic conjugate of $AQ \cdot l_\infty=Q$ with respect to $A$ and $Q$, which is $Q$ itself. (See [co1, 6.83, p. 84].)  In this case we claim that $Q=K(Q)$ is the pole of $l_\infty$.  The line $l_\infty$ intersects $\Npp$ at least once at $Q$; it is enough to show that $l_\infty$ intersects $\Npp$ {\it only} at $Q$, because that will imply it is the tangent at $Q$, hence its pole is $Q$. Suppose $l_\infty$ also intersects $\Npp$ at a point $X$. The nine-point conic $\Npp$ is the locus of centers (that is, poles of $l_\infty$) of conics that lie on $A, B, C$, and $P'=Q$, which means $X$ is the pole of $l_\infty$ with respect to one of these conics $\cC$. Now $X$ cannot be $D_3, E_3$, or $F_3$, which are the centers of the degenerate conics through $ABCQ$, because none of these points lies on $l_\infty$.  Thus $\cC$ is nondegenerate.  By assumption, $X$ lies on its polar $l_\infty$ with respect to $\cC$, so $X$ lies on $\cC$ and $l_\infty$ is the tangent line at $X$. But we assumed that $Q$ and $X$ are distinct points on $l_\infty$, so $l_\infty$ is also a secant of $\cC$, a contradiction.  \smallskip

b) By I, Corollary 3.11, the anticevian triangle $A'B'C' $ of $Q$ is  $T_{P'}^{-1}(ABC)$. By I, Theorem 3.7, $Q = T_{P'}^{-1}(P')$, so the quadrangle $ABCP'$ is mapped to quadrangle $A'B'C'Q$ by the map $T_{P'}^{-1}$. The diagonal points $D_3, E_3, F_3$ of quadrangle $ABCP'$ map to $A, B, C$ so $T_{P'}^{-1}(\Npp)$ is certainly a circumconic for triangle $ABC$ with center $T_{P'}^{-1}K(Q)=O$, by Theorem \ref{thm:HO}. \smallskip

c) By Theorem \ref{thm:HO}, II, Theorem 2.4, and the fact that the map $\eta$ (see the discussion following Prop. 2.3 in Part II) commutes with the complement map, we have that
\[\eta(O) = \eta T_{P'}^{-1}K(Q) = T_{P}^{-1}K(\eta(Q)) = T_{P}^{-1}K(Q') = O',\]
and similarly $\eta(H) = H'$.
\end{proof}

We show now that there are 4 points $P$ which give rise to the same generalized circumcenter $O$ and generalized orthocenter $H$. These points arise in the following way. Let the vertices of the anticevian triangle for $Q$ with respect to $ABC$ be denoted by
\[Q_a = T_{P'}^{-1}(A), Q_b = T_{P'}^{-1}(B), Q_c = T_{P'}^{-1}(C).\]
Then we have
\[A = Q_bQ_c \cd QQ_a, B = Q_aQ_c\cd QQ_b, C = Q_aQ_b \cd QQ_c.\]
This clearly implies that $QQ_aQ_b$ is the anticevian triangle of $Q_c$ with respect to $ABC$. Similarly, the anticevian triangle of any one of these four points is the triangle formed by the other three (analogous to the corresponding property of the excentral triangle). We let $P_a, P_b, P_c$ be the anti-isotomcomplements of the points $Q_a, Q_b, Q_c$ with respect to $ABC$, so that
\[P_a = \iota^{-1}K^{-1}(Q_a), \ P_b = \iota^{-1}K^{-1}(Q_b),  \ P_c = \iota^{-1}K^{-1}(Q_c).\]

\begin{thm}
\label{thm:Pabc}
The points $P, P_a, P_b, P_c$ all give rise to the same generalized circumcenter $O$ and generalized orthocenter $H$.
\end{thm}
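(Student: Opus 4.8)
The plan is to reduce the entire statement to the single assertion that $P, P_a, P_b, P_c$ share the same generalized circumcenter $O$, and then to exploit the symmetric, set-theoretic description of $O$ furnished by Theorem \ref{thm:K(Q)}(b).

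First I would observe that by Theorem \ref{thm:HO} the generalized orthocenter of any admissible point is the complement-inverse of its generalized circumcenter, $H = K^{-1}(O)$, and that $K$ is a fixed dilatation not depending on the chosen point. Consequently it suffices to prove $O_a = O_b = O_c = O$ for the generalized circumcenters attached to $P_a, P_b, P_c$; applying $K^{-1}$ then gives $H_a = H_b = H_c = H$ at once. So the whole theorem collapses to a statement about $O$.

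Next I would recall the content of Theorem \ref{thm:K(Q)}(b): the circumconic $\tilde{\C}_O$ with center $O$ is exactly the nine-point conic (with respect to $l_\infty$) of the quadrangle formed by $Q$ and the vertices $Q_a, Q_b, Q_c$ of the anticevian triangle of $Q$, and $O$ is the pole of $l_\infty$ with respect to it. Since the nine-point conic of a quadrangle, and hence its center, depends only on the four points as an \emph{unordered} set, this exhibits $O$ as an invariant of the set $\{Q, Q_a, Q_b, Q_c\}$. This is the key reformulation, and the rest of the argument is a matter of recognizing the same set arising from each of the four points.

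The heart of the proof is then to apply this characterization to $P_a$ (and symmetrically to $P_b, P_c$). The definition $P_a = \iota^{-1}K^{-1}(Q_a)$ gives $K(\iota(P_a)) = Q_a$, so $Q_a$ is precisely the isotomcomplement of $P_a$, and Theorem \ref{thm:K(Q)}(b) may be invoked with $P_a$ in place of $P$: it identifies $O_a$ as the center of the nine-point conic of the quadrangle consisting of $Q_a$ together with the vertices of the anticevian triangle of $Q_a$. But by the self-referential property recorded just before the statement — that the anticevian triangle of any one of $Q, Q_a, Q_b, Q_c$ is the triangle formed by the other three — the anticevian triangle of $Q_a$ is $QQ_bQ_c$. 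Hence the quadrangle relevant to $P_a$ is $\{Q_a, Q, Q_b, Q_c\}$, the very same four-point set as for $P$. The two nine-point conics therefore coincide, and so do their centers, giving $O_a = O$; the identical reasoning with $Q_b$ and $Q_c$ yields $O_b = O_c = O$, and applying $K^{-1}$ completes the proof.

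I expect the argument itself to be short and clean once part (b) is in hand; the only points requiring care are hypotheses rather than computation. One must check that Theorem \ref{thm:K(Q)}(b) is legitimately applicable to each auxiliary point, i.e. that $P_a, P_b, P_c$ are ordinary and lie off the extended sides and the anticomplementary triangle, so that their generalized circumcenters exist (Theorem \ref{thm:HO}) and admit the nine-point description; and that $Q, Q_a, Q_b, Q_c$ are in general position, so that they genuinely form a quadrangle with a well-defined nine-point conic. Both are consequences of the standing genericity assumption on $P$ (in particular, $P$ not lying on a median or on $\iota(l_\infty)$), and this is the only place where that assumption is needed.
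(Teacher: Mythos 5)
Your proposal is correct and follows essentially the same route as the paper: both invoke Theorem \ref{thm:K(Q)}(b) to identify $\tilde{\C}_O$ with the nine-point conic of the quadrangle $\{Q, Q_a, Q_b, Q_c\}$, note that this unordered quadrangle is the same for all four points because the anticevian triangle of any one of $Q, Q_a, Q_b, Q_c$ is formed by the other three, and then recover $O$ as the center of that conic and $H$ as its anticomplement. Your additional remarks on verifying that $Q_a$ is the isotomcomplement of $P_a$ and on the admissibility of $P_a, P_b, P_c$ merely make explicit what the paper leaves to the preceding discussion.
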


\begin{proof}
We use the characterization of the circumconic $\tilde{\C}_O$ from Theorem \ref{thm:K(Q)}(b). It is the nine-point conic $\N_Q$ for the quadrangle $Q_aQ_bQ_cQ$. But this quadrangle is the same for all of the points $P, P_a, P_b, P_c$, by the above discussion, so each of these points gives rise to the same circumconic $\tilde{\C}_O$. This implies the theorem, since $O$ is the center of $\tilde{\C}_O$ and $H$ is the anti-complement of $O$.
\end{proof}

\begin{cor}
The point-set $\{A, B, C, H\}$ is the common intersection of the four conics $\cC_Y$, for $Y = P, P_a, P_b, P_c$.
\end{cor}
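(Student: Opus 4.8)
The plan is to prove the two inclusions separately. For the inclusion
\[\{A,B,C,H\} \subseteq \cC_P \cap \cC_{P_a} \cap \cC_{P_b} \cap \cC_{P_c},\]
I would first note that each cevian conic $\cC_Y$ is a circumconic of $ABC$ and so passes through $A$, $B$, and $C$. By Theorem \ref{thm:HonCp} the generalized orthocenter of $Y$ lies on $\cC_Y$ for every admissible point $Y$, and by Theorem \ref{thm:Pabc} the four points $P, P_a, P_b, P_c$ share one and the same generalized orthocenter $H$. Combining these two facts shows that $H$ lies on all four conics, which gives the forward inclusion at once.

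For the reverse inclusion I would use the projective fact that two distinct conics meet in at most four points. All four of the conics $\cC_Y$ belong to the pencil of conics through the base points $A, B, C, H$. If at least two of them are distinct, then those two already contain the four distinct points $A, B, C, H$ and therefore meet in exactly $\{A, B, C, H\}$; since the common intersection of all four conics is contained in the intersection of any two of them, it must then equal $\{A, B, C, H\}$, completing the argument.

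The step I expect to be the main obstacle is the verification of the required non-degeneracies: that $A, B, C, H$ are genuinely four distinct points, and that the four conics are pairwise distinct. For the latter I would work inside the pencil through $A, B, C, H$: each member is pinned down by one additional point, so $\cC_Y$ is the unique member through $Y$, and two of the conics coincide exactly when one of $P_a, P_b, P_c$ happens to lie on $\cC_P = ABCPQ$. Since $P, P_a, P_b, P_c$ are four distinct points --- each being the anti-isotomcomplement of a distinct vertex $Q_a, Q_b, Q_c$ of the anticevian triangle of $Q$ --- I expect this coincidence to be excluded by the standing hypotheses ($P$ off the medians and off $\iota(l_\infty)$), the same hypotheses that, through the analysis of Section 4, keep $H$ away from the vertices and so make $A, B, C, H$ distinct. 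Extracting these non-degeneracies from the explicit affine formulas for $P_a, P_b, P_c$ is where the actual work lies; once they are in hand, the Bézout/pencil conclusion is immediate.
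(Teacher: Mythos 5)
Your forward inclusion and the B\'ezout step are exactly what the paper intends: every $\cC_Y$ is a circumconic of $ABC$, each contains the generalized orthocenter of its own $Y$ by Theorem \ref{thm:HonCp}, and that orthocenter is the same $H$ for all four points by Theorem \ref{thm:Pabc}; once any two of the conics are known to be distinct, their intersection is forced to be $\{A,B,C,H\}$ and the common intersection of all four follows. The genuine gap is precisely in the step you defer as ``where the actual work lies,'' namely the pairwise distinctness of the four conics. The route you sketch --- deducing it from the distinctness of the four points $P, P_a, P_b, P_c$ --- cannot work as stated: a conic contains infinitely many points, so $P_a \neq P_b$ does nothing to prevent $P_b$ from lying on $\cC_{P_a}$, and no amount of unwinding the affine formulas for the $P$'s along these lines will produce the contradiction you need. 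Distinctness of the base points is strictly weaker than distinctness of the conics through them.

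The paper closes this gap with one observation you are missing: each conic $\cC_{P_a}=ABCP_aQ_a$ passes, by its very definition, through the isotomcomplement $Q_a=K(\iota(P_a))$ of its defining point, and the $Q$'s are arranged so that $A = Q_bQ_c \cd QQ_a$, $B = Q_aQ_c \cd QQ_b$, $C = Q_aQ_b \cd QQ_c$. Hence if, say, $\cC_{P_a}=\cC_{P_b}$, this single nondegenerate conic would contain $Q_a$, $Q_b$, and the vertex $C$, which all lie on the line $Q_aQ_b$ --- three collinear points on a conic, a contradiction. The hypothesis that $P$ avoids the medians enters only to guarantee that $Q, Q_a, Q_b, Q_c$ (and hence the three points just named) are genuinely distinct. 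So the missing idea is not a computation with $P_a, P_b, P_c$ at all, but the switch to the fifth defining points $Q_a, Q_b, Q_c$, whose collinearity with the vertices of $ABC$ is what kills any coincidence of the conics.
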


\begin{proof}
If $P$ does not lie on a median of $ABC$, the points $Q, Q_a, Q_b, Q_c$ are all distinct, since $Q_aQ_bQ_c$ is the anticevian triangle of $Q$ with respect to $ABC$.  It follows that the points $P, P_a, P_b, P_c$ are distinct, as well.  If two of the conics $\mathcal{C}_Y$ were equal, say $\mathcal{C}_{P_a}=\mathcal{C}_{P_b}$, then this conic lies on the points $Q_a, Q_b$ and $QQ_c \cdot Q_aQ_b =C$, which is impossible.  This shows that the conics $\mathcal{C}_Y$ are distinct.
\end{proof}

The points $Q_a, Q_b, Q_c$ are the analogues of the excenters of a triangle, and the points $P_a, P_b, P_c$ are the analogues of the external Gergonne points. The traces of the points $P_a, P_b, P_c$ can be determined directly from the definition of $H$; for example, $Q_cD_c, Q_cE_c, Q_cF_c$ are parallel to $AH, BH, CH$, which are in turn parallel to $QD, QE, QF$.  \medskip

The next theorem shows that the points $H$ and $H'$ are a natural part of the family of points that includes $P, Q, P', Q'$.

\begin{thm}
\label{thm:lambda}
If the ordinary point $P$ does not lie on a median of $ABC$ or on $\iota(\li)$, we have:
\begin{enumerate}[a)]
\item $\la(P) = Q'$ and $\la^{-1}(P') = Q$.
\item $\la(H) = Q$ and $\la^{-1}(H') = Q'$.
\end{enumerate}
\end{thm}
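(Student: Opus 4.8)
The plan is to reduce everything to a small set of transfer relations for the cevian maps and then treat the two parts asymmetrically. From the proof of Theorem \ref{thm:HO} I already have $T_P(Q) = Q$ and $T_{P'}(Q) = P'$. Applying the same reasoning to $P'$ in place of $P$ (so that $Q$ is replaced by $Q' = K(P)$ and the roles of $T_P$ and $T_{P'}$ are interchanged) gives the companion relations $T_{P'}(Q') = Q'$ and $T_P(Q') = P$. These four identities are the only facts about the cevian maps needed for part (a).

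Part (a) is then immediate. Since $T_P(Q') = P$ gives $T_P^{-1}(P) = Q'$, I compute $\la(P) = T_{P'}T_P^{-1}(P) = T_{P'}(Q') = Q'$. Likewise $T_{P'}(Q) = P'$ gives $T_{P'}^{-1}(P') = Q$, so $\la^{-1}(P') = T_PT_{P'}^{-1}(P') = T_P(Q) = Q$.

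For part (b) the substantive step is $\la(H) = Q$, which I rewrite as $H = \la^{-1}(Q) = T_PT_{P'}^{-1}(Q)$, equivalently $T_P^{-1}(H) = T_{P'}^{-1}(Q)$. Rather than substituting the formula $H = K^{-1}T_{P'}^{-1}K(Q)$ of Theorem \ref{thm:HO} head-on (which only unwinds into circular identities), I would verify that the point $X = T_PT_{P'}^{-1}(Q)$ satisfies the defining conditions of the generalized orthocenter, $XA \pa QD$, $XB \pa QE$, $XC \pa QF$; uniqueness then forces $X = H$. To check $XA \pa QD$ I transport it by the affine map $T_P^{-1}$, which preserves parallelism and satisfies $T_P^{-1}(Q) = Q$ and $T_P^{-1}(D) = A$, so that $QD$ becomes $QA$ and the condition becomes $T_{P'}^{-1}(Q)\,T_P^{-1}(A) \pa QA$. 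Writing $W = T_{P'}^{-1}(Q)$ and invoking the homothety $\mathcal{S}_1 = T_PT_{P'}$ (I, Theorem 3.8), which preserves all directions and satisfies $\mathcal{S}_1(W) = T_P(Q) = Q$, this last parallelism is equivalent to the collinearity of $Q$, $T_P^{-1}(A)$ and $T_{P'}T_P^{-1}(A)$. Here $T_P^{-1}(A)$ is a vertex of the anticevian triangle of $Q'$, and this collinearity is the real content; I expect to establish it from the anticevian incidence $A \in Q'\,T_P^{-1}(A)$ together with the transfer relations, in the same spirit as the parallelism $AQ \pa D_0A_0'$ proved via the Quadrilateral Half-turn Theorem inside the proof of Theorem \ref{thm:HO}. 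This collinearity is precisely the step I expect to be the main obstacle; every other move is formal.

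Finally, the remaining identity $\la^{-1}(H') = Q'$ follows from the $P \leftrightarrow P'$ symmetry of the construction: replacing $P$ by $P'$ interchanges $T_P$ and $T_{P'}$, sends $Q \mapsto Q'$ and $H \mapsto H'$, and replaces $\la = T_{P'}T_P^{-1}$ by $T_PT_{P'}^{-1} = \la^{-1}$ (this symmetry is realized by the affine reflection $\eta$ of Theorem \ref{thm:K(Q)}(c), which conjugates $\la$ to $\la^{-1}$). Under this substitution the identity $\la(H) = Q$ becomes exactly $\la^{-1}(H') = Q'$, so no separate argument is required.
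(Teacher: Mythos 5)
Your part (a) is correct and essentially self-contained: the four transfer relations $T_P(Q)=Q$, $T_{P'}(Q)=P'$, $T_{P'}(Q')=Q'$, $T_P(Q')=P$ all hold (the first two are quoted in the proof of Theorem \ref{thm:HO}; the other two are their images under the $P\leftrightarrow P'$ symmetry), and the computations $\la(P)=T_{P'}(Q')=Q'$ and $\la^{-1}(P')=T_P(Q)=Q$ follow. The paper simply cites Part II, Theorem 3.2 here, so your route is if anything more explicit. Your setup for part (b) is also sound as far as it goes: verifying that $X=T_PT_{P'}^{-1}(Q)$ satisfies the three defining parallelisms of $H$, transporting $XA\pa QD$ by $T_P^{-1}$ and then by the homothety $\mathcal{S}_1$, does correctly reduce the claim to the collinearity of $Q$, $\tilde A=T_P^{-1}(A)$, and $\la(A)=T_{P'}(\tilde A)$ (plus its analogues at $B$ and $C$), and the final appeal to $\eta$ for $\la^{-1}(H')=Q'$ matches the paper.

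The gap is exactly where you flag it: that collinearity is stated as an expectation, not proved, and it does not follow formally from the transfer relations or from the single anticevian incidence $A\in Q'\tilde A$ --- for a general affine map $T_{P'}$ with fixed point $Q'$ there is no reason for $X$, $T_{P'}(X)$, and the external point $Q$ to be collinear, so a genuine geometric input is needed. The paper supplies precisely this input: by I, Theorem 2.5 one has $QD_0\pa AP'$, and by II, Theorem 3.4(a) the points $Q$, $D_0$, $\la(A)$ are collinear; hence the line $Q\la(A)=QD_0$ is parallel to $D_3P'=AP'$, and applying $T_{P'}^{-1}$ (which sends $\la(A)\mapsto\tilde A$, $D_3\mapsto A$, $P'\mapsto Q$) gives $T_{P'}^{-1}(Q)\tilde A\pa AQ$, which is exactly your transported parallelism; equivalently, the collinearity you need is the statement that $\tilde A$ lies on the line $QD_0=Q\la(A)$. (The paper also sidesteps your ``uniqueness of $H$'' step by noting instead that $T_P^{-1}(H)$ is the generalized orthocenter of $Q'$ for the anticevian triangle $\tilde A\tilde B\tilde C$, so that $T_P^{-1}(H)\tilde A\pa QA$ as well; either form of the uniqueness argument is acceptable.) Until you import II, Theorem 3.4(a) together with I, Theorem 2.5, or prove the equivalent collinearity directly, part (b) is incomplete.
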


\noindent \Remarks This gives an alternate formula for the point $H = \la^{-1}(Q) = \eta \la^2(P)$. Part b) gives an alternate proof that $\eta(H) = H'$.

\begin{proof}
Part a) was already proved as part of the proof of II, Theorem 3.2. Consider part b), which we will prove by showing that
\begin{equation}
\label{eqn:4.2}
T_P^{-1}(H) = T_{P'}^{-1}(Q).
\end{equation}
The point $T_P^{-1}(H)$ is the generalized orthocenter for the point $T_P^{-1}(P) = Q'$ and the triangle $T_P^{-1}(ABC) = \tilde A \tilde B \tilde C$, which is the anticevian triangle for $Q'$ with respect to $ABC$ (I, Corollary 3.11(a)). It follows that the lines $T_P^{-1}(H)\tilde A$ and $QA$ are parallel, since $Q$ is the isotomcomplement of $Q'$ with respect to $\tilde A \tilde B \tilde C$ (I, Theorem 3.13), and $A$ is the trace of the cevian $\tilde A Q'$ on $\tilde B \tilde C$. We will show that the line $T_{P'}^{-1}(Q)\tilde A$ is also parallel to $QA$. For this, first recall from I, Theorem 2.5 that $QD_0$ and $AP'$ are parallel.  Part II,  Theorem 3.4(a) implies that $Q, D_0$, and $\la(A)$ are collinear, so it follows that $Q\la(A)$ and $D_3P' = AP'$ are parallel. Now apply the map $T_{P'}^{-1}$. This shows that $T_{P'}^{-1}(Q\la(A)) = T_{P'}^{-1}(Q)T_{P'}^{-1}\la(A) = T_{P'}^{-1}(Q)\tilde A$ is parallel to $T_{P'}^{-1}(D_3P') = AQ$, as claimed. Applying the same argument to the other vertices of $T_P^{-1}(ABC) = \tilde A \tilde B \tilde C$ gives (\ref{eqn:4.2}), which is equivalent to $\la(H) = Q$. Now apply the map $\eta$ to give $\la^{-1}(H') = Q'$.
\end{proof}

\begin{thm}
\label{thm:HonCp}
If $P$ does not lie on a median of triangle $ABC$ or on $\iota(l_\infty)$, the generalized orthocenter $H$ of $P$ and the generalized orthocenter $H'$ of $P'$ both lie on the cevian conic $\mathcal{C}_P=ABCPQ$.
\end{thm}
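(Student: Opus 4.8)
The plan is to read everything off the behaviour of the map $\la=T_{P'}T_P^{-1}$ on the cevian conic $\Cp$, using Theorem \ref{thm:lambda}. Recall from Part II that the conic $\Cp=ABCPQ$ coincides with $ABCP'Q'$, so that $P'$ and $Q'$ also lie on $\Cp$; under the standing hypotheses ($P$ ordinary, on no median, and not on $\iota(\li)$) the five points $A,B,C,P,Q$ are in general position and determine $\Cp$ uniquely, and $\la$ is an affine bijection to which Theorem \ref{thm:lambda} applies. Since that theorem gives $\la(H)=Q$ and $\la(Q')=H'$ (the latter rewriting $\la^{-1}(H')=Q'$), both conclusions $H\in\Cp$ and $H'\in\Cp$ will follow at once from the single claim
\[\la(\Cp)=\Cp,\]
because then $H=\la^{-1}(Q)\in\la^{-1}(\Cp)=\Cp$ and $H'=\la(Q')\in\la(\Cp)=\Cp$, using $Q,Q'\in\Cp$.

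Thus the whole proof reduces to showing that $\la$ carries $\Cp$ onto itself. As $\la$ is affine it sends the nondegenerate conic $\Cp$ to a nondegenerate conic $\la(\Cp)$, and two nondegenerate conics agree as soon as they have five common points (on a nondegenerate conic no three points are collinear, so five shared points determine a unique conic). Theorem \ref{thm:lambda}(a) already furnishes two such points: $\la(P)=Q'$ and $\la(Q)=P'$ both lie on $\Cp$. It therefore suffices to prove that the three vertex images $\la(A),\la(B),\la(C)$ lie on $\Cp$ as well; then $\la(\Cp)$ and $\Cp$ share the five points $Q',P',\la(A),\la(B),\la(C)$ and must coincide.

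Locating these three images is the step I expect to be the main obstacle, and the one where the synthetic machinery of Part I is needed. I would factor $\la(A)=T_{P'}\big(T_P^{-1}(A)\big)$ and use I, Corollary 3.11, by which $T_P^{-1}(ABC)$ is the anticevian triangle of $Q'$ with respect to $ABC$; hence $T_P^{-1}(A)$ is the vertex of that anticevian triangle lying on the line $AQ'$. Applying $T_{P'}$ and tracking this point with the Part I formulas for the action of $T_{P'}$ on the relevant cevian and anticevian configurations (I, Theorems 3.7 and 3.13), one computes $\la(A)$ explicitly and checks the incidence $\la(A)\in\Cp$; the cases of $B$ and $C$ are identical after permuting the roles of the vertices. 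Since each incidence is a projective condition, I expect the verification to come down to a single parallelism or collinearity relation of the type already exploited in the proofs of Theorems \ref{thm:HO} and \ref{thm:lambda}.

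Finally, I note that $\la$ already delivers both $H$ and $H'$, so the affine reflection $\eta$ is not logically required here. Nevertheless, once $H\in\Cp$ is established one may alternatively obtain $H'\in\Cp$ from the relation $\eta(H)=H'$ of Theorem \ref{thm:K(Q)}(c), provided one first records that $\eta$ preserves $\Cp$; this would halve the vertex computation at the cost of importing the $\eta$-invariance of $\Cp$ from Part II.
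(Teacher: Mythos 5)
Your proposal is essentially the paper's own proof: the paper deduces the theorem in one line from Theorem \ref{thm:lambda}(b) together with the fact that $\la$ maps $\Cp$ to itself, the latter being cited as a prior result (II, Theorem 3.2) rather than re-proved. The only step you leave unexecuted --- the five-point verification of $\la(\Cp)=\Cp$ via $\la(A),\la(B),\la(C)\in\Cp$ --- is therefore unnecessary; one simply invokes II, Theorem 3.2 (indeed Theorem \ref{thm:lambda}(a), which you rely on, is itself proved as part of that theorem, so the invariance is already available).
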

\begin{proof}
Theorem \ref{thm:lambda}(b) and the fact that $\lambda$ maps the conic $\mathcal{C}_P$ to itself (II, Theorem 3.2) imply that $H$ lies on $\mathcal{C}_P$, and  since $Q'$ lies on $\mathcal{C}_P$, so does $H'$.
\end{proof}

\begin{cor}
When $P$ is the Gergonne point of $ABC$, the conic $\Cp = ABCPQ$ is the Feuerbach hyperbola $ABCHI$ on the orthocenter $H$ and the incenter $Q=I$.  The Feuerbach hyperbola also passes through the Nagel point ($P'$), the Mittenpunkt ($Q'$), and the generalized orthocenter $H'$ of the Nagel point, which is the point of intersection of the lines through $A, B, C$ which are parallel to the respective lines joining the Mittenpunkt to the opposite points of external tangency of $ABC$ with its excircles.
\end{cor}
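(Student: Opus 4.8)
The plan is to specialize the general theory to $P = Ge$ and to identify every object with its classical counterpart, the only real work being the recognition of the generalized orthocenter as the ordinary one. First I would record the relevant triangle-center identities. Since the isotomic conjugate of the Gergonne point is the Nagel point, the point $P' = \iota(P)$ is the Nagel point; since the complement of the Nagel point is the incenter, $Q = K(P') = I$; and since the complement of the Gergonne point is the Mittenpunkt, $Q' = K(P)$ is the Mittenpunkt. Moreover, the cevian triangle $DEF$ of $P = Ge$ is the contact triangle, so that $D, E, F$ are the points where the incircle touches $BC, CA, AB$.

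The key step, which is also the conceptual point of the whole construction, is to show that the generalized orthocenter $H$ of $Ge$ coincides with the ordinary orthocenter. By the definition of $H$, it is the intersection of the lines through $A, B, C$ drawn parallel to $QD, QE, QF$. Here $Q = I$ and $D, E, F$ are the contact points, so the segments $ID, IE, IF$ are radii of the incircle drawn to its points of tangency and are therefore perpendicular to $BC, CA, AB$, respectively. Hence the lines through $A, B, C$ parallel to $ID, IE, IF$ are exactly the three altitudes of $ABC$, and their common point $H$ is the orthocenter. This realizes the heuristic announced in the Introduction, in which the lines $QD, QE, QF$ play the role of the perpendiculars from the incenter to the contact points.

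With $H$ identified as the orthocenter, the conic $\Cp = ABCPQ$ passes through the five points $A, B, C, I, H$, no three of which are collinear for a generic triangle. Five such points determine a unique conic, and the conic through $A, B, C, I, H$ is by definition the Feuerbach hyperbola; hence $\Cp$ is the Feuerbach hyperbola. It then remains only to place the remaining named points on it. By the list in the Introduction, $\Cp$ already carries all nine points $A, B, C, P, Q, P', Q', H, H'$; in particular the Nagel point $P'$, the Mittenpunkt $Q'$, and the generalized orthocenter $H'$ of the Nagel point all lie on the Feuerbach hyperbola. The membership of $H$ and $H'$ can alternatively be cited directly from Theorem \ref{thm:HonCp}.

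Finally I would unwind the description of $H'$. As the generalized orthocenter of $P'$, the point $H'$ is, by the definition of the generalized orthocenter applied to the point $P'$, the intersection of the lines through $A, B, C$ parallel to $Q'D', Q'E', Q'F'$, where $D'E'F'$ is the cevian triangle of the Nagel point $P'$ and $Q'$ is the Mittenpunkt. Since the cevian triangle of the Nagel point is the extouch triangle, whose vertices are the points at which the excircles touch the opposite sides, the lines $Q'D', Q'E', Q'F'$ are precisely the lines joining the Mittenpunkt to those external points of tangency, as asserted. I expect the only genuine obstacle to be the perpendicularity observation in the second paragraph; everything else is bookkeeping with standard center identities together with the already-proved Theorem \ref{thm:HonCp}.
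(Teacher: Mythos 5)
Your proposal is correct and follows exactly the route the paper intends: the paper states this as an immediate corollary of Theorem \ref{thm:HonCp}, with the identifications $Q=I$, $P'=$ Nagel point, $Q'=$ Mittenpunkt, the observation that $ID, IE, IF$ are perpendicular to the sides (so the generalized orthocenter of $Ge$ is the classical one), and the fact that the cevian triangle of the Nagel point is the extouch triangle. Nothing is missing.
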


Part II, Theorem 3.4 gives six other points lying on the Feuerbach hyperbola. Among these is the point $A_0P \cd D_0Q'$, where $D_0Q'$ is the line joining the Mittenpunkt to the midpoint of side $BC$, and $A_0P$ is the line joining the Gergonne point to the opposite midpoint of its cevian triangle. Using I, Theorem 2.4 and the fact that the isotomcomplement of the incenter $Q=I$ with respect to the excentral triangle $I_aI_bI_c$ is the Mittenpunkt $Q'$ (see I, Theorem 3.13), it can be shown that the line $D_0Q'$ also passes through the excenter $I_a$ of triangle $ABC$ which lies on the angle bisector of the angle at $A$. Thus, $A_0P \cd D_0Q' = A_0P \cd I_aQ'$. By II, Theorem 3.4(b), the lines $DQ$ and $A'_3P'$ also lie on this point.  \medskip

From (\ref{eqn:4.2}) we deduce

\begin{thm}
\label{thm:PerspectivitesWithH}
Assume that the ordinary point $P$ does not lie on a median of $ABC$ or on $\iota(\li)$.
\begin{enumerate}[a)]
\item The point $Q$ is the perspector of triangles $D_0E_0F_0$ and $\la(ABC)$.
\item The point $\tilde H = T_P^{-1}(H) = T_{P'}^{-1}(Q)$ is the perspector of the anticevian triangle for $Q'$ (with respect to $ABC$) and the medial triangle of the anticevian triangle of $Q$. Thus, the points $\tilde A = T_P^{-1}(A), \tilde H$, and $T_{P'}^{-1}(D_0)$ are collinear, with similar statements for the other vertices.
\item The point $H$ is the perspector of triangle $ABC$ and the medial triangle of $\la^{-1}(ABC)$. Thus, $A, H$, and $\la^{-1}(D_0)$ are collinear, with similar statements for the other vertices.
\item The point $\tilde H$ is also the perspector of the anticevian triangle of $Q$ and the cevian triangle of $P'$. Thus, $\tilde H$ is the $P'$-ceva conjugate of $Q$, with respect to triangle $ABC$.
\item The point $H$ is also the perspector of $\la^{-1}(ABC)$ and the triangle $A_3B_3C_3$.
\end{enumerate}
\end{thm}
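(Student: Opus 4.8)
The plan is to derive all five parts from the single basic collinearity of Part~a), namely the statement of II, Theorem 3.4(a) that $Q$, $D_0$, and $\la(A)$ are collinear (together with its cyclic analogues), by pushing this relation forward under the various affine maps and reading off the resulting perspectivities. Part~a) is then immediate: the lines $D_0\la(A)$, $E_0\la(B)$, $F_0\la(C)$ all pass through $Q$, so $Q$ is the perspector of $D_0E_0F_0$ and $\la(ABC)$. Throughout I would use $\la = T_{P'}T_P^{-1}$, $\la^{-1} = T_PT_{P'}^{-1}$, $\mathcal{S}_1 = T_PT_{P'}$, formula (\ref{eqn:4.2}) in the form $\tilde H = T_P^{-1}(H) = T_{P'}^{-1}(Q)$, and Theorem~\ref{thm:lambda}(b) in the form $H = \la^{-1}(Q)$.

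For Part~b) I would apply $T_{P'}^{-1}$ to the three collinearities of Part~a). Since $T_{P'}^{-1}(Q) = \tilde H$, $T_{P'}^{-1}\la(A) = T_P^{-1}(A) = \tilde A$, and $T_{P'}^{-1}$ carries the medial triangle $D_0E_0F_0$ to the medial triangle $T_{P'}^{-1}(D_0E_0F_0)$ of the anticevian triangle $T_{P'}^{-1}(ABC)$ of $Q$, the image collinearity is exactly $\tilde A$, $\tilde H$, $T_{P'}^{-1}(D_0)$; this is Part~b). For Part~c) I would instead apply $\la^{-1}$: here $\la^{-1}(Q) = H$ by Theorem~\ref{thm:lambda}(b), $\la^{-1}\la(A) = A$, and $\la^{-1}(D_0E_0F_0)$ is the medial triangle of $\la^{-1}(ABC)$, which gives the collinearity of $A$, $H$, $\la^{-1}(D_0)$, i.e.\ Part~c).

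Parts~d) and~e) both reduce to a single perspectivity, which I will call $(\star)$: the triangles $ABC$ and $T_{P'}^2(ABC)$ are perspective from $Q$, equivalently $A$, $Q$, $T_{P'}^2(A)$ are collinear (and cyclically). Granting $(\star)$, Part~d) follows by applying $T_{P'}^{-1}$, which sends $ABC$ to the anticevian triangle $Q_aQ_bQ_c$ of $Q$, sends $T_{P'}^2(ABC)$ to $T_{P'}(ABC) = D_3E_3F_3$ (the cevian triangle of $P'$), and sends the center $Q$ to $\tilde H = T_{P'}^{-1}(Q)$; thus $\tilde H$ is the perspector of the anticevian triangle of $Q$ and the cevian triangle of $P'$, which is by definition the $P'$-ceva conjugate of $Q$. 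For Part~e), recalling that $A_3B_3C_3 = \mathcal{S}_1(ABC) = T_PT_{P'}(ABC)$, I would apply $\la^{-1}$ to $(\star)$: since $\la^{-1}(Q) = H$, $\la^{-1}(A)$ is a vertex of $\la^{-1}(ABC)$, and $\la^{-1}(T_{P'}^2(A)) = T_PT_{P'}(A) = A_3$, the image is the collinearity of $\la^{-1}(A)$, $H$, $A_3$, which is Part~e).

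The main obstacle is therefore the proof of $(\star)$. For the perspectivity itself I would invoke the Cevian Nest Theorem applied to the nest $ABC \supset D_3E_3F_3 \supset T_{P'}^2(ABC)$: the middle triangle $D_3E_3F_3 = T_{P'}(ABC)$ is the cevian triangle of $P'$ in $ABC$, while the inner triangle $T_{P'}^2(ABC) = T_{P'}(D_3E_3F_3)$ is the cevian triangle of $T_{P'}(P')$ in $D_3E_3F_3$ (affine maps carry cevian triangles to cevian triangles), so the outer and inner triangles are perspective. What remains, and is the genuinely delicate step, is to locate the center of this perspectivity at $Q$ rather than merely to know it exists. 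I would do this either through the refined cevian-nest correspondence, which identifies this perspector precisely as a ceva conjugate, or, failing a clean synthetic identification, by the short barycentric check that, writing $P = (f:g:h)$ so that $Q = (f(g+h):g(f+h):h(f+g))$ and $T_{P'}^2(A) \propto (h^2(g+f)+g^2(h+f) : fg(h+f) : fh(g+f))$, the $3\times 3$ determinant on $A$, $Q$, $T_{P'}^2(A)$ vanishes identically. I expect the cleanest fully synthetic route to use the already-established fact $T_P(Q) = Q$ together with the anticevian relation $A \in QQ_a$ (so that $A$, $Q$, $Q_a = T_{P'}^{-1}(A)$ are collinear) to pin the center; but converting ``$Q_a$ lies on $AQ$'' into ``$T_{P'}^2(A)$ lies on $AQ$'' is exactly where the difficulty concentrates.
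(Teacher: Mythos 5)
Your proposal is correct and takes essentially the same route as the paper: parts a)--c) and e) use exactly the same push-forwards under $T_{P'}^{-1}$, $\la^{-1}$, and $T_P$, and your key collinearity $(\star)$ --- that $A$, $Q$, $T_{P'}^2(A)$ are collinear --- is precisely the content of I, Theorem 3.5 (with $A_1'=T_{P'}^2(A)$ on $E_3F_3$), which the paper simply cites to obtain part d). Your barycentric verification of $(\star)$ is valid (the last two coordinates of $T_{P'}^2(A)$ are $f$ times those of $Q$), so the only divergence is that you supply a proof of the lemma the paper imports from Part I.
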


\begin{proof}
a) As in the previous proof, the points $Q, D_0$, and $\la(A)$ are collinear, with similar statements for $Q, E_0, \la(B)$ and for $Q, F_0, \la(C)$. Therefore, $Q$ is the perspector of triangles $D_0E_0F_0$ and $\la(ABC)$. \smallskip

b) Now apply the map $T_{P'}^{-1}$, giving that $T_{P'}^{-1}(Q)$ is the perspector of $T_{P'}^{-1}(D_0E_0F_0)$, which is the medial triangle of $T_{P'}^{-1}(ABC)$, and the triangle $T_{P'}^{-1}\la(ABC) = T_P^{-1}(ABC)$. The result follows from (\ref{eqn:4.2}) and I, Corollary 3.11.

c) This part follows immediately by applying the map $T_P$ to part (b) or $\lambda^{-1}$ to part (a). \smallskip

d) I, Theorem 3.5 gives that $A, A'_1$ (on $E_3F_3$), and $Q$ are collinear, from which we get that $T_{P'}^{-1}(A) = A', T_{P'}^{-1}(A'_1) = D'_1 = D_3$, and $T_{P'}^{-1}(Q) = \tilde H$ are collinear. This and the corresponding statements for the other vertices imply (d).  \smallskip

e) Applying the map $T_P$ to (d) show that $\la^{-1}(A), A_3$, and $H$ are collinear, with similar statements for the other vertices.
\end{proof}

\end{section}

\begin{section}{The generalized Feuerbach Theorem.}

Recall that the $9$-point conic $\Nh$, with respect to the line $l_\infty$ is the conic which passes through the following nine points: \medskip

the midpoints $D_0, E_0, F_0$ of the sides $BC, AC, AB$;

the midpoints $R_1, R_2, R_3$ of the segments $AH, BH, CH$;

the traces (diagonal points) $H_1,H_2,H_3$ of $H$ on the sides $BC,AC,AB$. \medskip

\noindent In this section we give a synthetic proof, based on the results of this paper, that $\Nh$ is tangent to the inconic of $ABC$ whose center is $Q$, at the generalized Feuerbach point $Z$.  We start by showing that the conics $\Nh$ and $\tilde{\C}_O$ are in the same relationship to each other as the classical $9$-point circle and circumcircle.

\begin{thm}
\label{thm:NH}
Assume that $P$ is a point for which the generalized circumcenter $O$ is not the midpoint of any side of $ABC$, so that $H$ does not coincide with any of the vertices $A, B$, or $C$.  As in Theorem 2.4, $\tilde{\C}_O$ is the unique circumconic of $ABC$ with center $O$.  Then the $9$-point conic $\Nh$ is the complement of $\tilde{\C}_O$ with respect to $ABC$.  It is also the image of $\tilde{\C}_O$ under a homothety $\textsf{H}$ centered at $H$ with factor $1/2$.  Its center is the midpoint $N$ of segment $HO$.
\end{thm}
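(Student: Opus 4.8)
The plan is to identify the nine-point conic $\Nh$ with the complement $K(\tilde{\C}_O)$ by a direct count of common points, and then to recognize the same conic as the image $\textsf{H}(\tilde{\C}_O)$. Recall that the complement map $K$ is the homothety centered at the centroid $G$ with ratio $-\tfrac12$ (it carries $ABC$ to the medial triangle $D_0E_0F_0$), and that $H=K^{-1}(O)$ by Theorem \ref{thm:HO}. Since $\tilde{\C}_O$ is the circumconic through $A,B,C$ with center $O$, its image $K(\tilde{\C}_O)$ is a conic through $K(A)=D_0$, $K(B)=E_0$, $K(C)=F_0$, with center $K(O)$. First I would show that $K(O)$ is the midpoint $N$ of $HO$: the points $H=K^{-1}(O)$, $O$, and $K(O)$ all lie on the line $GO$ through the center of the homothety $K$, and the homothety ratios place them at signed positions $-2$, $1$, $-\tfrac12$ from $G$ (in units of $\overrightarrow{GO}$); hence $K(O)$ is the midpoint of $H$ and $O$.

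Next I would bring in the homothety $\textsf{H}$ centered at $H$ with ratio $\tfrac12$, which sends $A,B,C$ to the midpoints $R_1,R_2,R_3$ of $AH,BH,CH$ and sends $O$ to the midpoint $N$ of $HO$. The key step is to show $\textsf{H}(\tilde{\C}_O)=K(\tilde{\C}_O)$. For this I consider the composite $\textsf{H}^{-1}\circ K$, a dilatation of ratio $(-\tfrac12)(2)=-1$, i.e. a half-turn. Because $K(O)=N=\textsf{H}(O)$ from the previous step, this composite fixes $O$, so it is precisely the half-turn about $O$. A half-turn about the center of a central conic is a symmetry of that conic, so $\textsf{H}^{-1}K(\tilde{\C}_O)=\tilde{\C}_O$, which gives $K(\tilde{\C}_O)=\textsf{H}(\tilde{\C}_O)$ at once; this simultaneously proves the homothety assertion of the theorem.

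Finally, this common conic passes through $D_0,E_0,F_0$ (as the $K$-images of $A,B,C$) and through $R_1,R_2,R_3$ (as the $\textsf{H}$-images of $A,B,C$), that is, through six of the nine points defining $\Nh$. Since two distinct conics meet in at most four points and $\Nh$ is nondegenerate under the stated hypothesis, sharing five (indeed six) points forces $K(\tilde{\C}_O)=\textsf{H}(\tilde{\C}_O)=\Nh$. Its center is then $K(O)=N$, the midpoint of $HO$, which establishes all three assertions.

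I expect the main obstacle to be the clean identification of $\textsf{H}^{-1}K$ with the half-turn about $O$, which rests entirely on the relation $H=K^{-1}(O)$ and on the verification that $K(O)=\textsf{H}(O)=N$; without this the two homotheties $K$ and $\textsf{H}$ would not map $\tilde{\C}_O$ to the same conic. A secondary point to confirm is that the hypothesis that $O$ is not the midpoint of a side (equivalently $H\neq A,B,C$) keeps $\Nh$ nondegenerate and keeps the six points $D_0,E_0,F_0,R_1,R_2,R_3$ in sufficiently general position (at least five distinct) so that the intersection bound applies.
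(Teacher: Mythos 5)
Your proposal is correct and follows essentially the same route as the paper: both arguments rest on $H=K^{-1}(O)$, the identification $N=K(O)=\textsf{H}(O)$, and the key identity $\textsf{H}=K\circ\textsf{R}_O$ (your $\textsf{H}^{-1}K=\textsf{R}_O$), concluding by matching the conic $K(\tilde{\C}_O)=\textsf{H}(\tilde{\C}_O)$ against $\Nh$ via the six midpoints $D_0,E_0,F_0,R_1,R_2,R_3$. The only difference is cosmetic: the paper verifies $R_1=K(T_1)$ by an explicit similar-triangles computation and reads off $\textsf{H}=K\textsf{R}_O$ at the end, whereas you derive the half-turn identity abstractly from the dilatation ratios first.
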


\begin{proof}
Let $T_1,T_2,T_3$ denote the reflections of $A,B,C$ through the point $O$, and denote by $S_1,S_2,S_3$ the intersections of $AH,BH,CH$ with the conic $\C=\tilde{\C}_O$.  Let $\textsf{R}_O$ the mapping which is reflection in $O$.  By Theorem \ref{thm:HO}, $K(H)=O$, so $K(O)=N$.  Hence $K(\C)$ is a conic through the midpoints of the sides of $ABC$ with center $N$.  Reflecting the line $AH$ in the point $O$, we obtain the parallel line $T_1S_1'$ with $S_1'=\textsf{R}_O(S_1)$ on $\C$.  This line contains the point $\bar{H}=\textsf{R}_O(H)=K^{-1}(H)$, since the centroid $G$ lies on $OH$.  If we define $R_1=T_1G \cdot AH$, then triangle $HGR_1$ is similar to $\bar{H}GT_1$.  Hence, $R_1=K(T_1)$ lies on $K(\C)$ and $R_1H=K\textsf{R}_O(AH)$, so $R_1$ is the midpoint of $AH$.  In the same way, $R_2=K(T_2)$ and $R_3=K(T_3)$ lie on the conic $K(\C)$.  This shows that $K(\C)$ lies on the midpoints of the sides of the quadrangle $ABCH$, and so is identical with the conic $\Nh$.  Since the affine map $\textsf{H}=K\textsf{R}_O$ takes $A,B,$ and $C$ to the respective midpoints of $AH, BH, CH$ and fixes $H$, it is clear that $\textsf{H}$ is the homothety centered at $H$ with factor $1/2$, and $\textsf{H}(\C)=K\textsf{R}_O(\C)=K(\C)=\Nh$.
\end{proof}

\begin{figure}
\[\includegraphics[width=5.5in]{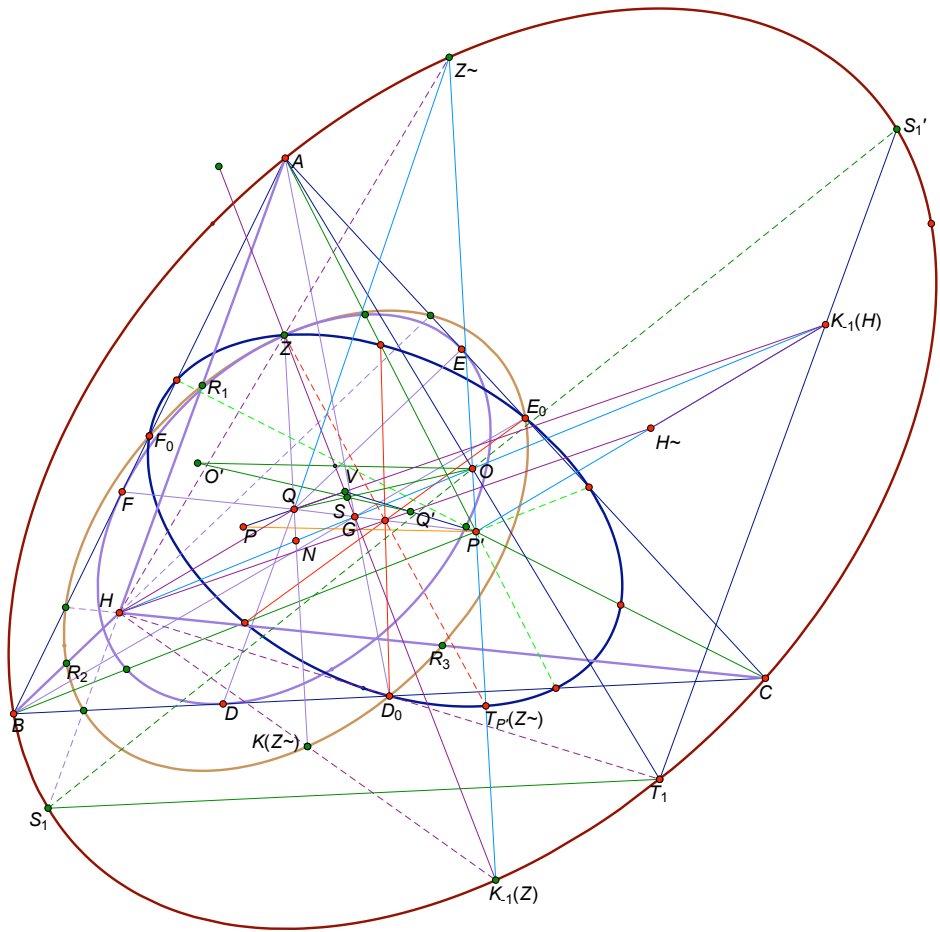}\]
\caption{$\Npp, \tilde{\mathcal{C}}_O$, $\mathcal{N}_{H}$, and $\mathcal{I}$ \ ($K_{-1}=K^{-1}$)}
\label{fig:3.1}
\end{figure}

\begin{prop}
\label{thm:STparBC}
Assume the hypothesis of Theorem \ref{thm:NH}.  If $O$ is not on the line $AH$, and the intersections of the cevians $AH$ and $AO$ with the circumconic $\tilde{\C}_O$ are $S_1$ and $T_1$, respectively, then $S_1T_1$ is parallel to $BC$.
\end{prop}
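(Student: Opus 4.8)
The plan is to exploit the circumconic's center $O$ together with the homothety structure already established in Theorem \ref{thm:NH}. Since $\tilde{\C}_O$ is a conic with center $O$, reflection $\textsf{R}_O$ in the point $O$ maps $\tilde{\C}_O$ to itself. The key observation is that $T_1$, being the second intersection of the line $AO$ with $\tilde{\C}_O$, is exactly the reflection of $A$ through $O$; that is, $T_1 = \textsf{R}_O(A)$, since a line through the center of a conic meets the conic in two points symmetric about the center. This matches the notation in the proof of Theorem \ref{thm:NH}, where $T_1,T_2,T_3$ were defined as the reflections of $A,B,C$ through $O$.

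First I would make precise the claim $T_1 = \textsf{R}_O(A)$: the chord $AT_1$ passes through the center $O$, so $O$ is its midpoint, giving $T_1 = \textsf{R}_O(A)$ immediately. Next I would use the homothety $\textsf{H} = K\textsf{R}_O$ centered at $H$ with factor $1/2$ from Theorem \ref{thm:NH}, under which $\textsf{H}(\tilde{\C}_O) = \Nh$ and $\textsf{H}(A) = R_1$, the midpoint of $AH$. The point $S_1$ lies on both $AH$ and $\tilde{\C}_O$, so I would track where the chord $S_1 T_1$ is sent and relate it to a known parallel. Concretely, applying $\textsf{R}_O$ to the line $AH$ produces the parallel line through $T_1 = \textsf{R}_O(A)$ meeting $\tilde{\C}_O$ at $\textsf{R}_O(S_1)$; this already shows $AH \pa T_1 \textsf{R}_O(S_1)$. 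The task is instead to relate the chord $S_1 T_1$ to side $BC$.

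The cleanest route is to recognize $S_1 T_1$ as a chord of $\tilde{\C}_O$ and compute its direction via the homothety $\textsf{H}$ to the nine-point conic $\Nh$, where the corresponding points have a transparent description as midpoints and traces. Since $\textsf{H}(A) = R_1$ (midpoint of $AH$) and $\textsf{H}(T_1) = \textsf{H}\textsf{R}_O(A) = K\textsf{R}_O\textsf{R}_O(A) = K(A) = D_0$, the midpoint of $BC$, the image $\textsf{H}(T_1) = D_0$ lies on side... more importantly, $\textsf{H}(S_1)$ is the point $H_1$, the trace of $H$ on $BC$, because $S_1$ lies on the cevian $AH$ and $\textsf{H}$ carries $\tilde{\C}_O$ to $\Nh$ fixing the line $AH$ through $H$. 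Thus the chord $S_1 T_1$ of $\tilde{\C}_O$ maps under the homothety $\textsf{H}$ to the chord $H_1 D_0$ of $\Nh$, and these two chords are parallel since $\textsf{H}$ is a homothety. Both $H_1$ and $D_0$ lie on line $BC$, so $H_1 D_0 \subseteq BC$, whence $S_1 T_1 \pa H_1 D_0 = BC$.

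The main obstacle I anticipate is verifying cleanly that $\textsf{H}(S_1) = H_1$, i.e. that the second intersection $S_1$ of the cevian $AH$ with $\tilde{\C}_O$ maps under $\textsf{H}$ precisely to the trace $H_1 = AH \cdot BC$ on $\Nh$, rather than to $R_1$; this requires distinguishing the two intersection points of the line $AH$ with each conic and confirming that the homothety pairs them correctly (it fixes $H$ and sends $A \mapsto R_1$, so it must send the other intersection $S_1$ to the other nine-point point on $AH$, which is $H_1$). Once that pairing is pinned down, the parallelism is immediate from the homothety. An alternative, should the homothety bookkeeping prove delicate, is to argue directly: reflect in $O$ and use that $S_1 T_1$ and $\textsf{R}_O(S_1)\textsf{R}_O(T_1) = \textsf{R}_O(S_1) A$ are parallel chords, then identify directions through the center $O$; but the homothety-to-$\Nh$ argument is the most economical and ties directly into the structure of Theorem \ref{thm:NH}.
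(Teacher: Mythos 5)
Your argument is correct, and it takes a genuinely different route from the paper's. You run everything through the homothety $\textsf{H}=K\textsf{R}_O$ of Theorem \ref{thm:NH}: since $T_1=\textsf{R}_O(A)$, you get $\textsf{H}(T_1)=K(A)=D_0$, and since $\textsf{H}$ fixes the line $AH$ and carries $\tilde{\C}_O$ to $\Nh$, it must send $S_1$ to the second point of $AH\cap\Nh$, namely $H_1$; as $H_1$ and $D_0$ both lie on $BC$ (and $H_1\ne D_0$, because $H_1=D_0$ would put $O=K(H)$ on the median $AD_0=AH$, contrary to hypothesis), the chord $S_1T_1$ is parallel to its image $H_1D_0=BC$. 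The paper instead avoids invoking the trace $H_1$ at all: it uses the Quadrilateral Half-turn Theorem to locate $R_1$ as the reflection of $O$ in the midpoint of $E_0F_0$, deduces by midline arguments that $D_0$ is the midpoint of $HT_1$ and hence that $OD_0$ bisects the chord $S_1T_1$, and then concludes by the conjugate-diameter (polarity) argument that $BC$ and $S_1T_1$, being two chords bisected by the same diameter $OD_0$, are parallel. Your route is shorter and needs fewer preliminaries (no appeal to the Half-turn Theorem or to $H$ avoiding the sidelines), at the cost of leaning on the fact that $H_1$ lies on $\Nh$ and on the pairing of intersection points that you rightly flag as the delicate step. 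That pairing does close up: if $R_1=H_1$ then the tangent to $\tilde{\C}_O$ at $A$ is parallel to $OD_0\pa AH$ and hence equals $AH$, forcing $S_1=A$; so whenever $S_1$ is a genuine second intersection (as the statement implicitly assumes, and as the paper's own proof also assumes in forming triangle $HS_1T_1$), the two nine-point conic points $R_1$ and $H_1$ on $AH$ are distinct and $\textsf{H}(S_1)=H_1$ as you claim.
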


\begin{proof}
(See Figure 2.) First note that $P$ does not lie on the median $AG$, since $H$ and $O$ lie on this median whenever $P$ does, by Theorem \ref{thm:K(Q)} and the arguments in the proof of II, Theorem 2.7, according to which $T_P$ and $T_{P'}$ fix the line $AG$ when it lies on $P$.  Thus, $H$ cannot lie on any of the secant lines $AB, AC$, or $BC$ of the conic $\Cp=ABCPQ$, because $H$ is also on this conic and does not coincide with a vertex.  It is also easy to see that $H$ does not lie on a side of the anticomplementary triangle $K^{-1}(ABC)$, since $Q$ does not lie on a side of $ABC$.  By the proof of I, Theorem 2.5 (Quadrilateral Half-turn Theorem), with $H$ as the point $P$ and $O=K(H)$ as the point $Q'$, we know that $R_1$ is the reflection of $O$ in the midpoint $N_1$ of $E_0F_0$.  Since $R_1$ is the midpoint of $AH$, the line $R_1O$ is a midline of triangle $AHT_1$; and since $N_1$ is the midpoint of segment $R_1O$, we know that $AN_1$ intersects the opposite side $HT_1$ in its midpoint $M$, with $N_1$ halfway between $A$ and $M$.
However, $AN_1 \cong N_1D_0$, because $N_1$ is the midpoint of the midline $E_0F_0$ of triangle $ABC$.  Therefore, $D_0=M$ is the midpoint of $HT_1$.  Now, considering triangle $HS_1T_1$, with $OD_0 \pa HS_1$ (on $AH$), we know that $OD_0$ intersects $S_1T_1$ in its midpoint.  But $D_0$ is the midpoint of the chord $BC$ of the conic $\tilde{\C}_O$, and $S_1T_1$ is also a chord of $\tilde{\C}_O$.  Since $O$ is the center of $\tilde{\C}_O$, it follows that $OD_0$ is the polar of the points at infinity of both $BC$ and $S_1T_1$ (with respect to $\tilde{\C}_O$), whence these lines must be parallel.
\end{proof}

When the point $H$ does coincide with a vertex, we define $\Nh$ by $\Nh=K(\tilde{\C}_O)$.

\begin{prop}
If the generalized orthocenter $H=A$, then $\Nh=K(\tilde{\C}_O)$ is the unique conic on the vertices of the quadrangle $AD_0E_0F_0$ which is tangent to the conic $\tilde{\C}_O=T_{P'}^{-1}(\mathcal{N}_{P'})$ at $A$.
\end{prop}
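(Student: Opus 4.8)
The plan is to use the hypothesis $H=A$ to locate $O$ and then to exploit the central symmetry of $\tilde{\C}_O$. Since $K(H)=O$ by Theorem \ref{thm:HO}, the assumption $H=A$ forces $O=K(A)=D_0$, the midpoint of $BC$; thus $\tilde{\C}_O$ is centrally symmetric about $D_0$. To see that $\Nh=K(\tilde{\C}_O)$ passes through $A,D_0,E_0,F_0$, first note that $D_0=K(A)$, $E_0=K(B)$, $F_0=K(C)$ are the $K$-images of the points $A,B,C\in\tilde{\C}_O$, hence lie on $K(\tilde{\C}_O)=\Nh$. For the vertex $A$ itself, observe that $A\in\Nh$ if and only if $K^{-1}(A)\in\tilde{\C}_O$; but $K^{-1}(A)=B+C-A$ is precisely the reflection of $A$ in $D_0=O$, which lies on $\tilde{\C}_O$ by central symmetry because $A$ does. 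So all four points lie on $\Nh$.

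Next I would prove tangency at $A$. Let $A^{*}=K^{-1}(A)=\textsf{R}_O(A)$ be the antipode of $A$ on $\tilde{\C}_O$, where $\textsf{R}_O$ is reflection in the center $O=D_0$. Since $\textsf{R}_O$ is a symmetry of $\tilde{\C}_O$ taking $A$ to $A^{*}$, it carries the tangent of $\tilde{\C}_O$ at $A$ to the tangent at $A^{*}$, and being a point reflection it sends every line to a parallel line; hence these two tangents are parallel. On the other hand $K$ takes $\tilde{\C}_O$ to $\Nh$ and $A^{*}=K^{-1}(A)$ to $A$, so it carries the tangent of $\tilde{\C}_O$ at $A^{*}$ to the tangent of $\Nh$ at $A$, and being a dilatation it likewise preserves direction. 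Composing the two parallelisms, the tangent to $\Nh$ at $A$ is parallel to the tangent to $\tilde{\C}_O$ at $A$; as both lines pass through $A$ they must coincide, which is exactly the assertion that $\Nh$ and $\tilde{\C}_O$ are tangent at $A$.

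For uniqueness I would argue with the pencil of conics through the four points. No three of $A,D_0,E_0,F_0$ are collinear (the medial triangle $D_0E_0F_0$ is nondegenerate and $A$ lies off each of its midlines), so these conics form a pencil containing the degenerate members $AD_0\cup E_0F_0$ and $AE_0\cup D_0F_0$, whose tangent lines at $A$ are the two distinct lines $AD_0$ and $AE_0$. The tangent (polar) at $A$ of a general member of the pencil is the corresponding linear combination of these two lines, so as the pencil parameter runs over $\mathbb{P}^1$ the tangent at $A$ runs bijectively over all lines through $A$. Hence exactly one conic of the family is tangent to $\tilde{\C}_O$ at $A$, and by the preceding steps that conic is $\Nh$.

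The delicate step is the tangency argument in the second paragraph: the content is that both the central reflection $\textsf{R}_O$ (a symmetry of $\tilde{\C}_O$) and the complement map $K$ preserve directions of lines, and that chaining them forces the two tangents at $A$ — which already share the point $A$ — to be the same line rather than merely parallel. Once this is in place, the remaining verifications (the incidences of the four points and the dimension count for uniqueness) are routine.
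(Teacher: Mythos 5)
Your proposal is correct and follows essentially the same route as the paper: identify $O=D_0$, note that $K^{-1}(A)$ is the antipode of $A$ on $\tilde{\C}_O$ (so $A\in\Nh$), and chain the two direction-preserving maps (the point reflection $\textsf{R}_O$ and the dilatation $K$) to force the two tangents at $A$ to coincide. The only difference is that you also spell out the uniqueness via the pencil of conics through the four base points, which the paper leaves implicit; that addition is sound.
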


\begin{proof}
If $H = A$, then $O = D_0$ is the center of $\tilde C_O$, so the reflection of $A$ through $D_0$ lies on $\tilde C_O$, and this reflection is precisely the anticomplement $A'=K^{-1}(A)=K^{-2}(D_0)$ of $A$.  Then clearly $A=K(A')$ lies on $\Nh=K(\tilde{\C}_O)$. Thus $\Nh$ is a conic on the vertices of $AD_0E_0F_0$.  The tangent line $t'$ to $\tilde{\C}_O$ at $A'$ is taken by the complement map to a parallel tangent line $t_1$ to $\Nh$ at $A$.  But the tangent line $t_2$ to $\tilde{\C}_O$ at $A$ is also parallel to $t'$ because the line $AA'$ lies on the center $O$ (dually, $a \cdot a'$ lies on $l_\infty$).  It follows that $t_1=t_2$, hence $\Nh$ is tangent to $\tilde{\C}_O$ at $A$.
\end{proof}

In this case, we take the point $S_1$ in the above proposition to be the intersection of the conic $\tilde{\C}_O$ with the line $t_A$ through $A$ which is parallel to $QD$, and we claim that we still have $S_1T_1 \pa BC$.  To prove this we first prove the following theorem.

\begin{thm}
\label{thm: CtoI}
The map $\textsf{M}=T_PK^{-1}T_{P'}$ is a homothety or translation taking the conic $\tilde{\C}_O$ to the inconic $\mathcal{I}$, which is the conic with center $Q$ tangent to the sides of $ABC$ at the points $D,E,F$.
\end{thm}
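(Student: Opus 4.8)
The plan is to establish two separate facts: that $\textsf{M}=T_P K^{-1} T_{P'}$ is a homothety or translation, and that it carries $\tilde{\C}_O$ onto $\mathcal{I}$. For the first I would work at the level of linear parts; write $L(\cdot)$ for the linear part of an affine map. The complement map $K$ is the homothety of ratio $-1/2$ about the centroid, so $L(K^{-1})=-2\,\mathrm{id}$ is a scalar and hence central. Therefore $L(\textsf{M})=L(T_P)L(K^{-1})L(T_{P'})=-2\,L(T_P)L(T_{P'})=-2\,L(\mathcal{S}_1)$, where $\mathcal{S}_1=T_P T_{P'}$ is a homothety or translation (I, Theorem 3.8). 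Thus $L(\mathcal{S}_1)$, and hence $L(\textsf{M})$, is a scalar multiple of the identity, which is precisely the condition for $\textsf{M}$ to be a homothety or translation.

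For the second fact I would first rewrite the image conic. By Theorem \ref{thm:K(Q)}(b) we have $\tilde{\C}_O=T_{P'}^{-1}(\Npp)$, so $\textsf{M}(\tilde{\C}_O)=T_P K^{-1}(\Npp)$. The crucial observation is that $K^{-1}(\Npp)$ is a circumconic of $ABC$ with center $Q$: the complement map sends $A,B,C$ to the side-midpoints $D_0,E_0,F_0$, which lie on $\Npp$, so $K^{-1}$ returns $D_0,E_0,F_0$ to $A,B,C$; and since the center of $\Npp$ is $K(Q)$ by Theorem \ref{thm:K(Q)}(a), the center of $K^{-1}(\Npp)$ is $K^{-1}K(Q)=Q$. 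Applying $T_P$ and using $T_P(A)=D$, $T_P(B)=E$, $T_P(C)=F$ together with $T_P(Q)=Q$ (noted in the proof of Theorem \ref{thm:HO}), we find that $\textsf{M}(\tilde{\C}_O)$ passes through $D,E,F$ and has center $Q$. Consistently, since $\textsf{M}$ is a homothety or translation it carries the center $O$ of $\tilde{\C}_O$ to the center of the image, and indeed $\textsf{M}(O)=T_P K^{-1} T_{P'}(T_{P'}^{-1}K(Q))=T_P(Q)=Q$.

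It then remains to identify this conic with $\mathcal{I}$. The inconic $\mathcal{I}$ passes through its contact points $D,E,F$ and has center $Q$ as well, so $\textsf{M}(\tilde{\C}_O)$ and $\mathcal{I}$ are both nondegenerate conics with center $Q$ through $D,E,F$. By central symmetry each also passes through the reflections of $D,E,F$ in $Q$, so the two conics share six points; since two distinct conics meet in at most four points, they must coincide, giving $\textsf{M}(\tilde{\C}_O)=\mathcal{I}$. I expect this last identification to be the main obstacle: the computations above only force $\textsf{M}(\tilde{\C}_O)$ to pass through $D,E,F$ with center $Q$, and the real content is ruling out any other such conic. The central-symmetry/six-point argument does this cleanly, the one delicate point being the distinctness of the six points---which follows from the standing hypotheses, since then $D,E,F$ are distinct and non-collinear and $Q$ lies on none of the sides. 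Alternatively one could verify directly that $\textsf{M}(\tilde{\C}_O)$ is tangent to the sides at $D,E,F$, but that route requires computing the tangent to $\Npp$ at the midpoints $D_0,E_0,F_0$ and is considerably messier.
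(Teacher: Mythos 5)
Your overall architecture matches the paper's: both arguments show $\textsf{M}$ is a homothety or translation because its linear part is a scalar (the paper cites the proof of I, Theorem 3.8 together with the fact that $K$ fixes all points at infinity), both compute that $\textsf{M}$ sends a triangle inscribed in $\tilde{\C}_O$ to $DEF$ and the center $O$ to $Q$ (your detour through $K^{-1}(\Npp)$ is equivalent to the paper's direct computation with $T_{P'}^{-1}(D_0E_0F_0)$), and both reduce the theorem to the uniqueness of a conic through $D,E,F$ with center $Q$. The difference is that the paper proves the fact on which that uniqueness rests, and you do not.

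The gap sits exactly at the point you flag as delicate. Distinctness of your six points requires that $Q$ be neither a vertex nor the midpoint of a side of triangle $DEF$; the second possibility is the dangerous one, and it is not excluded by ``$Q$ lies on none of the sides.'' If you mean the sides of $ABC$, the midpoint of $DE$ is in general an interior point of the triangle, so $Q$ could coincide with it while avoiding every side of $ABC$; if you mean the sides of $DEF$, then the assertion that $Q$ avoids them is precisely what needs proving and is not a standing hypothesis. The paper closes this hole explicitly: if $Q$ equaled the midpoint $A_0$ of $EF$, then since $T_P$ fixes $Q$ and $T_P(D_0)=A_0$, one would get $Q=T_P^{-1}(A_0)=D_0$ and hence $P'=K^{-1}(Q)=A$, contradicting the hypotheses on $P$ and $P'$. (A repair available within your own setup: if $Q$ were the midpoint of $DE$, then $DE$ would be a diameter of $\mathcal{I}$, so the tangents to $\mathcal{I}$ at $D$ and $E$ would be parallel; but those tangents are $BC$ and $AC$, which meet at $C$.) A second, smaller omission: when $P$ lies on the Steiner circumellipse, $Q=O$ is infinite and $\mathcal{I}$ is a parabola, so ``reflection in $Q$'' and the six-point count make no sense; the theorem still covers this case, and the paper disposes of it separately by observing that $\textsf{M}$ then fixes the tangent at $Q$, namely $l_\infty$.
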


\noindent {\bf Remark.} Below we will show that the fixed point (center) of the map $\textsf{M}$ is the point $S=OQ \cdot GV$, which coincides with the point $\gamma_P(P)=Q \cdot Q'$, where $\gamma_P$ is the generalized isogonal map, to be defined in Part IV of this paper, and $Q \cdot Q'$ is the point whose barycentric coordinates are the products of corresponding barycentric coordinates of $Q$ and $Q'$.

\begin{proof}
The proof of I, Theorem 3.8 shows that $\textsf{M}$ is a homothety or translation, since $K$ fixes all points at infinity.  This map takes the triangle $T_{P'}^{-1}(D_0E_0F_0)$, which is inscribed in $\tilde{\C}_O$ by Theorem \ref{thm:K(Q)}, to the triangle $DEF$, which is inscribed in the inconic.  It also takes the center $O=T_{P'}^{-1}K(Q)$ of $\tilde{\C}_O$ to the center $Q$ of $\mathcal{I}$.  Now $Q$ is never the midpoint of a side of triangle $DEF$, because, for instance, $Q=A_0$ would imply $Q=T_P^{-1}(A_0)=D_0$ and then $P'=K^{-1}(Q)=K^{-1}(D_0)=A$, contradicting the fact that $P$ and $P'$ do not lie on the sides of $ABC$.  It follows that $O$ is never the midpoint of a side of $T_{P'}^{-1}(D_0E_0F_0)$, and this implies that $\tilde{\C}_O$ is mapped to $\mathcal{I}$.  This result holds even if the point $O=Q$ is infinite, since then $\textsf{M}$ fixes the tangent at $Q$, i.e., the line at infinity.
\end{proof}

\begin{cor}
\label{cor:tangent}
The tangent to the conic $\tilde{\C}_O$ at the point $T_{P'}^{-1}(D_0)$ is parallel to $BC$, with similar statements for the other vertices of the medial triangle of the anticevian triangle of $Q$.
\end{cor}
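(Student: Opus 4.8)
The plan is to leverage the homothety or translation $\textsf{M}=T_PK^{-1}T_{P'}$ of Theorem \ref{thm: CtoI}, which carries $\tilde{\C}_O$ to the inconic $\mathcal{I}$, together with the fact that $\textsf{M}$ is a dilatation. First I would track the image of the point $T_{P'}^{-1}(D_0)$ under $\textsf{M}$. Since $K^{-1}(D_0)=A$ (because $D_0E_0F_0=K(ABC)$) and $T_P(A)=D$, a direct computation gives
$$\textsf{M}\bigl(T_{P'}^{-1}(D_0)\bigr)=T_PK^{-1}T_{P'}T_{P'}^{-1}(D_0)=T_PK^{-1}(D_0)=T_P(A)=D.$$
Thus $\textsf{M}$ sends the vertex $T_{P'}^{-1}(D_0)$ of the triangle $T_{P'}^{-1}(D_0E_0F_0)$, which is inscribed in $\tilde{\C}_O$ by Theorem \ref{thm:K(Q)}, to the vertex $D$ of the triangle $DEF$ inscribed in $\mathcal{I}$. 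This identification is exactly the correspondence used in the proof of Theorem \ref{thm: CtoI}.

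Next I would invoke that $\textsf{M}$, being a homothety or translation, fixes every point at infinity, and so maps each line to a parallel line. Because $\textsf{M}$ is a collineation carrying $\tilde{\C}_O$ to $\mathcal{I}$, it preserves tangency: the tangent to $\tilde{\C}_O$ at any point $X$ on it is carried to the tangent to $\mathcal{I}$ at $\textsf{M}(X)$, and this image line is parallel to the original. Applying this with $X=T_{P'}^{-1}(D_0)$, the tangent to $\tilde{\C}_O$ at $T_{P'}^{-1}(D_0)$ is parallel to the tangent to $\mathcal{I}$ at $D$.

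Finally, by the definition of the inconic, $\mathcal{I}$ is tangent to the side $BC$ of $ABC$ at $D$, so the tangent line to $\mathcal{I}$ at $D$ is the line $BC$ itself. Combining this with the previous paragraph shows that the tangent to $\tilde{\C}_O$ at $T_{P'}^{-1}(D_0)$ is parallel to $BC$. The analogous computations $\textsf{M}(T_{P'}^{-1}(E_0))=E$ and $\textsf{M}(T_{P'}^{-1}(F_0))=F$ yield, by the identical argument, that the tangents to $\tilde{\C}_O$ at $T_{P'}^{-1}(E_0)$ and $T_{P'}^{-1}(F_0)$ are parallel to $CA$ and $AB$, respectively. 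I do not expect a genuine obstacle here: the entire content reduces to the bookkeeping identity $\textsf{M}(T_{P'}^{-1}(D_0))=D$ and the standard fact that a dilatation preserves tangency up to parallelism. The one point deserving a word of care is that the argument remains valid when $O=Q$ is infinite, which is already handled within Theorem \ref{thm: CtoI}, where $\textsf{M}$ fixes the common tangent line $l_\infty$.
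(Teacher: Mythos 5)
Your proof is correct and follows essentially the same route as the paper: both apply the dilatation $\textsf{M}$ of Theorem \ref{thm: CtoI}, note that it carries the tangent at $T_{P'}^{-1}(D_0)$ to the tangent to $\mathcal{I}$ at $D$ (which is $BC$), and conclude parallelism since a dilatation maps lines to parallel lines. Your explicit computation $\textsf{M}(T_{P'}^{-1}(D_0))=T_PK^{-1}(D_0)=T_P(A)=D$ simply makes precise the vertex correspondence the paper takes for granted from the proof of Theorem \ref{thm: CtoI}.
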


\begin{proof}
The tangent to $\tilde{\C}_O$ at $T_{P'}^{-1}(D_0)$ is mapped by $\textsf{M}$ to and therefore parallel to the tangent to $\mathcal{I}$ at $D$, which is $BC$.
\end{proof}

We also need the following proposition in order to complete the proof of the above remark, when $O=D_0$.

\begin{prop}
\label{thm:psi}
Let $\psi_1, \psi_2, \psi_3$ be the mappings of conjugate points on $l_\infty$ which are induced by the polarities corresponding to the inconic $\mathcal{I}$ (center $Q$), the circumconic $\tilde{\C}_O$ (center $O$), and the $9$-point conic $\Nh$ (center $N$).  If $O$ and $Q$ are finite, then $\psi_1=\psi_2=\psi_3$.
\end{prop}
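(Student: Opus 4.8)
The plan is to reduce the whole statement to a single projective principle: \emph{a collineation that fixes the line $\li$ pointwise and carries one conic onto another identifies the two involutions of conjugate points that those conics induce on $\li$.} Indeed, if a collineation $T$ satisfies $T(\cC_1)=\cC_2$, then $T$ conjugates the polarity of $\cC_1$ to that of $\cC_2$; in particular a pair $X,Y$ on $\li$ is conjugate with respect to $\cC_1$ precisely when $T(X),T(Y)$ is conjugate with respect to $\cC_2$. If moreover $T$ restricts to the identity on $\li$, this reads: $X,Y$ conjugate for $\cC_1$ iff $X,Y$ conjugate for $\cC_2$, so the two induced involutions coincide. The point is that every homothety and every translation restricts to the identity on $\li$, since such a map carries each direction to itself.

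First I would record that under the hypothesis the three conics are genuine central (non-parabolic) conics, so that $\psi_1,\psi_2,\psi_3$ are honestly defined involutions on $\li$. Because $Q$ and $O$ are finite, $\mathcal{I}$ and $\tilde{\C}_O$ are ellipses or hyperbolas; and since $H=K^{-1}(O)$ and $N$ (the midpoint of $HO$) are then finite as well, $\Nh$ is a central conic too. Each such conic meets $\li$ either in two points or not at all, so its polarity restricts to a true involution on $\li$ rather than the degenerate map collapsing $\li$ to a single direction.

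Next I would invoke the two maps already at hand. By Theorem \ref{thm: CtoI} the map $\textsf{M}=T_PK^{-1}T_{P'}$ is a homothety or translation with $\textsf{M}(\tilde{\C}_O)=\mathcal{I}$; applying the principle with $T=\textsf{M}$, $\cC_1=\tilde{\C}_O$, $\cC_2=\mathcal{I}$ gives $\psi_2=\psi_1$. By Theorem \ref{thm:NH} the complement map $K$ (a homothety centered at the centroid) satisfies $\Nh=K(\tilde{\C}_O)$; applying the principle with $T=K$, $\cC_1=\tilde{\C}_O$, $\cC_2=\Nh$ gives $\psi_2=\psi_3$. Combining the two equalities yields $\psi_1=\psi_2=\psi_3$, as claimed.

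I do not anticipate a serious obstacle: the only points requiring care are that $\textsf{M}$ and $K$ genuinely fix $\li$ pointwise (immediate, as both are homotheties or translations) and that conjugacy with respect to a conic is preserved under collineations (a standard property of polarities). The real role of the finiteness hypothesis on $O$ and $Q$ is exactly to guarantee that none of the three conics is tangent to $\li$, which is what makes each polarity induce an honest involution; without it, the ``involutions'' could degenerate and the identity $\psi_1=\psi_2=\psi_3$ would lose its meaning.
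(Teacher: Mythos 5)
Your proposal is correct and follows essentially the same route as the paper: both arguments rest on the fact that a collineation $T$ conjugates the polarity of a conic to that of its image (so the conjugacy involution on $T(q)$ is $T\psi T^{-1}$), and then apply this with $T=\textsf{M}$ (Theorem \ref{thm: CtoI}) and $T=K$ (Theorem \ref{thm:NH}), using that homotheties and translations fix $l_\infty$ pointwise. Your added remark that finiteness of $O$ and $Q$ ensures $l_\infty$ is not self-conjugate, so the involutions are genuinely defined, is a correct gloss on the same hypothesis the paper uses implicitly.
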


\begin{proof}
If $T$ is any projective collineation, and $\pi$ is the polarity determining a conic $\C$, then the polarity determining $T(\C)$ is $T\pi T^{-1}$.  If $q$ is any non-self-conjugate line for $\pi$, then $T(q)$ is not self-conjugate, since its pole $T(Q)$ does not lie on $T(q)$.  If $\psi$ is the mapping of conjugate points on $q$, then the polar of a point $A$ on $q$ is $\pi(A)=a=Q\psi(A)$.  Hence, the polar of $T(A)$ on $T(q)$ is
\[T\pi T^{-1}(T(A))=T\pi(A)=T(Q\psi(A))=T(Q)T(\psi(A))=T(Q) T\psi T^{-1}(T(A)).\]
This shows that the mapping of conjugate points on $T(q)$ is $T \psi T^{-1}$.  Now apply this to $\tilde{\C}_O$ and $\Nh=K(\tilde{\C}_O)$, with $T=K$ and $q=l_\infty$.  The complement mapping fixes the points on $l_\infty$, so $\psi_3=K\psi_2 K^{-1}=\psi_2$.  Theorem \ref{thm: CtoI} and a similar argument shows that $\psi_1=\psi_2$.
\end{proof}

\begin{cor} The conclusion of Proposition \ref{thm:STparBC} holds if $A=H$ and $O=D_0$, where $S_1$ is defined to be the intersection of the conic $\tilde{\C}_O$ with the line through $A$ parallel to $QD$.
\label{cor:S1}
\end{cor}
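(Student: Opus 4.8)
The plan is to reduce the whole statement to the involution of conjugate directions on $l_\infty$ and to invoke Proposition \ref{thm:psi}. First I would record the two structural facts that tame the degenerate configuration. Since $O = D_0$ is an ordinary point, $P$ cannot lie on $\iota(l_\infty)$ (otherwise $O = Q$ would be infinite, by the Corollary to Theorem \ref{thm:HO}); hence $Q$ is ordinary and Proposition \ref{thm:psi} applies, giving $\psi_1 = \psi_2$. Second, because $O = D_0$ is the center of $\tilde{\C}_O$, the cevian $AO = AD_0$ is a diameter, so its second intersection $T_1$ with $\tilde{\C}_O$ is the reflection of $A$ in $O$; that is, $O$ is the midpoint of $AT_1$ (this is the point $A' = K^{-1}(A)$ appearing in the preceding proposition). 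Throughout I write $\infty_\ell$ for the point at infinity of a line $\ell$.

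Next I would prove the central-conic lemma that $\psi_2$ carries $\infty_{AS_1}$ to $\infty_{S_1T_1}$. Let $M_1$ be the midpoint of the chord $AS_1$ of $\tilde{\C}_O$. By definition the diameter of $\tilde{\C}_O$ conjugate to the direction of $AS_1$ is the locus of midpoints of chords parallel to $AS_1$, so it is the line $OM_1$, and its point at infinity is therefore $\psi_2(\infty_{AS_1})$. But $O$ is the midpoint of $AT_1$ and $M_1$ the midpoint of $AS_1$, so $OM_1$ is a midline of triangle $AS_1T_1$ and is parallel to $S_1T_1$. Hence $\psi_2(\infty_{AS_1}) = \infty_{OM_1} = \infty_{S_1T_1}$.

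It remains to identify the two relevant directions and chain the equalities. By construction $AS_1 \pa QD$, so $\infty_{AS_1} = \infty_{QD}$. On the inconic $\mathcal{I}$, the side $BC$ is the tangent at $D$ and $Q$ is the center, so the diameter $QD$ is conjugate to the tangent direction at $D$; that is, $\psi_1(\infty_{QD}) = \infty_{BC}$. Combining this with the lemma and with $\psi_1 = \psi_2$ gives
\[\infty_{S_1T_1} = \psi_2(\infty_{AS_1}) = \psi_2(\infty_{QD}) = \psi_1(\infty_{QD}) = \infty_{BC},\]
so $S_1T_1 \pa BC$, which is exactly the conclusion of Proposition \ref{thm:STparBC}.

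I expect the only genuine subtlety to be the bookkeeping of the degenerate case rather than the conic theory: one must confirm that $T_1$ really is the antipode of $A$ and that the line $t_A$ through $A$ parallel to $QD$ is a secant meeting $\tilde{\C}_O$ in an ordinary point $S_1 \neq A$ (so that $M_1 \neq O$ and the midline argument is legitimate). Once $\psi_1 = \psi_2$ is available, the conjugacy computations themselves are routine, and the proof is essentially the observation that joining a point of a central conic and its antipode to a common third point produces conjugate directions.
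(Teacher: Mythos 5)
Your proof is correct and follows essentially the same route as the paper's: both rest on $\psi_1=\psi_2$ from Proposition \ref{thm:psi}, together with the facts that $AT_1$ is a diameter of $\tilde{\C}_O$ (so the directions of $AS_1$ and $S_1T_1$ are conjugate) and that $BC$ is the tangent to $\mathcal{I}$ at the endpoint $D$ of the diameter $QD$ (so the directions of $QD$ and $BC$ are conjugate). The only difference is that you supply the midpoint-of-chords/midline justification for the first conjugacy, which the paper asserts without proof.
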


\begin{proof}
The line $AO=AT_1$ is a diameter of $\tilde{\C}_O$, so the direction of $AS_1$, which equals the direction of $QD$, is conjugate to the direction of $S_1T_1$.  But the direction of $QD$ is conjugate to the direction of $BC$, since $BC$ is tangent to $\mathcal{I}$ at $D$, and since $\psi_1=\psi_2$.
Therefore, $S_1T_1$ and $BC$ lie on the same point at infinity.
\end{proof}

Proposition \ref{thm:STparBC} and Corollary \ref{cor:S1} will find application in Part IV.  To determine the fixed point of the mapping $\textsf{M}$ in Theorem \ref{thm: CtoI}, we need a lemma.

\begin{lem}
If $P$ does not lie on $\iota(l_\infty)$, the point $\tilde{H}=T_P^{-1}(H)$ is the midpoint of the segment joining $P'$ and $K^{-1}(H)$, and the reflection of the point $Q$ through $O$.
\end{lem}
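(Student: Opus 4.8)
The plan is to treat both assertions as purely affine facts, since midpoints and point-reflections are preserved by every affine map, and the complement map $K$ is the homothety centered at the centroid $G$ with ratio $-\tfrac12$. I would assemble the proof from four inputs already available: from Theorem \ref{thm:HO}, the formulas $O = T_{P'}^{-1}K(Q)$ and $H = K^{-1}(O)$; from (\ref{eqn:4.2}) together with I, Theorem 3.7, the identities $\tilde{H} = T_{P'}^{-1}(Q)$ and $T_{P'}(Q) = P'$; and from the definition $Q = K(P')$, the fact that $P' = K^{-1}(Q)$ is the anticomplement of $Q$. The hypothesis $P \notin \iota(\li)$ enters only to guarantee that $Q, P', O$, and $H$ are ordinary, so that the midpoint and reflection statements make sense.

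I would prove the reflection statement first. The single geometric observation driving the whole argument is that the complement of any point is the midpoint of that point and its anticomplement; applied to $Q$, and using $P' = K^{-1}(Q)$, this says that $K(Q)$ is the midpoint of $Q$ and $P'$. Applying the affine map $T_{P'}^{-1}$, which preserves midpoints, the midpoint of $T_{P'}^{-1}(Q)$ and $T_{P'}^{-1}(P')$ is $T_{P'}^{-1}K(Q)$; by the identities above these three points are $\tilde{H}$, $Q$, and $O$, respectively. Hence $O$ is the midpoint of $\tilde{H}$ and $Q$, which is exactly the statement that $\tilde{H}$ is the reflection of $Q$ through $O$.

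For the midpoint statement I would reduce to the reflection statement. Since $K^{-1}$ is affine and $P' = K^{-1}(Q)$, the midpoint of $P'$ and $K^{-1}(H)$ equals the image under $K^{-1}$ of the midpoint of $Q$ and $H$, so it suffices to check that $K(\tilde{H})$ is the midpoint of $Q$ and $H$. Using $\tilde{H} = 2O - Q$ from the reflection statement and $H = K^{-1}(O)$, this becomes a one-line verification once $G$ is taken as the origin, where $K$ acts as multiplication by $-\tfrac12$: then $P' = -2Q$, $H = -2O$, $K^{-1}(H) = 4O$, and indeed $\tfrac12\bigl(P' + K^{-1}(H)\bigr) = 2O - Q = \tilde{H}$.

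I expect no real obstacle: the content lies entirely in the bookkeeping. The one step requiring care is correctly tracking the images of $Q$, $P'$, and $K(Q)$ under $T_{P'}^{-1}$ and under the complement map, and invoking the single geometric fact that the complement of $Q$ is the midpoint of $Q$ and its anticomplement $P'$. This is what converts the otherwise opaque affine map $T_{P'}^{-1}$ — about which we know little beyond its action on $ABC$ and on the pair $Q, P'$ — into the concrete reflection relation; once that is in hand, the midpoint statement is immediate.
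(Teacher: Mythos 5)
Your proposal is correct and its essential step is the same as the paper's: both proofs apply $T_{P'}^{-1}$ to the relation ``$K(Q)$ is the midpoint of $P'Q$'' to conclude, via $T_{P'}^{-1}(P')=Q$, $T_{P'}^{-1}(Q)=\tilde H$ from (\ref{eqn:4.2}), and $O=T_{P'}^{-1}K(Q)$, that $O$ is the midpoint of $Q\tilde H$. The only difference is cosmetic: for the midpoint claim the paper shows synthetically that $H_1HQK^{-1}(H)$ is a parallelogram whose diagonals bisect each other at $O$, where you instead verify the same identity by a short vector computation with the centroid as origin.
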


\begin{proof}
Let $H_1$ be the midpoint of $P'K^{-1}(H)$.  The Euclidean quadrilateral $H_1HQK^{-1}(H)$ is a parallelogram, because $K^{-1}(QH)=P'K^{-1}(H)$ and the segment $H_1K^{-1}(H)$ is therefore congruent and parallel to $QH$.  The intersection of the diagonals is the point $O$, the midpoint of $HK^{-1}(H)$, so that $O$ is also the midpoint of $H_1Q$.  On the other hand, $K(Q)$ is the midpoint of segment $P'Q$, so Theorem \ref{thm:K(Q)} gives that $O=T_{P'}^{-1}K(Q)$ is the midpoint of $T_{P'}^{-1}(P'Q)=Q \tilde{H}$, by (3).  This implies that $H_1=\tilde{H}$.
\end{proof}

\begin{thm}
\label{thm: FixM}
If $P$ is ordinary and does not lie on a median of $ABC$ or on $\iota(l_\infty)$, the fixed point (center) of the map $\textsf{M}=T_P K^{-1} T_{P'}$ is $S=OQ \cdot GV=OQ \cdot O'Q'$.
\end{thm}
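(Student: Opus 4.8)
The plan is to identify the unique fixed point of $\textsf{M}$ as the common point of two $\textsf{M}$-invariant lines. By Theorem \ref{thm: CtoI}, $\textsf{M}$ is a homothety or translation, and the proof of that theorem gives $\textsf{M}(O)=Q$ with $O\ne Q$ (the case $O=Q$ occurs only on $\iota(\li)$, which is excluded). Since the present statement asserts a finite center $S$, we are in the case where $\textsf{M}$ is a genuine homothety, whose center therefore lies on every line that $\textsf{M}$ maps to itself. The first such line is $OQ$: from $\textsf{M}(O)=Q$, the fact that $\textsf{M}$ preserves directions, and $Q\in OQ$, the image $\textsf{M}(OQ)$ is the line through $Q$ parallel to $OQ$, namely $OQ$ itself. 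Hence $S\in OQ$.

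The crux is to place $S$ on a second line, for which I would use the symmetry $P\leftrightarrow P'$ and the affine reflection $\eta$. Running the argument of the previous paragraph for $P'$ shows that the center of the companion map $\textsf{M}'=T_{P'}K^{-1}T_P$ lies on $O'Q'$. Since $\eta$ interchanges $O\leftrightarrow O'$ and $Q\leftrightarrow Q'$, it carries $OQ$ to $O'Q'$, so it would suffice to prove that $\eta$ fixes $S$: then $S\in OQ$ gives $S=\eta(S)\in\eta(OQ)=O'Q'$, whence $S=OQ\cd O'Q'$. Equivalently, one must show that $\textsf{M}$ and $\textsf{M}'$ have the \emph{same} center — a fact \emph{not} implied by the $P\leftrightarrow P'$ symmetry alone, which only matches the center of $\textsf{M}$ with a point of $OQ$ and that of $\textsf{M}'$ with a point of $O'Q'$. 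I expect this coincidence of centers (equivalently, the $\eta$-invariance of $S$) to be the main obstacle. A more computational alternative verifies directly that $O'Q'$ is $\textsf{M}$-invariant: from $T_{P'}(Q')=Q'$ and $K^{-1}(Q')=P$ one gets $\textsf{M}(Q')=T_P(P)$, and from $O'=T_P^{-1}K(Q')$ together with the preceding Lemma (which gives $T_P^{-1}(Q')=2O'-Q'$, hence $T_P(O')=\tfrac12(P+Q')$) one computes $\textsf{M}(O')$; one then checks that both images lie on $O'Q'$, using the action of $T_P,T_{P'}$ recorded in I, Theorem 3.7 and the anticevian descriptions of Theorem \ref{thm:K(Q)}.

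Finally, granting $S=OQ\cd O'Q'$, the remaining equality $S=OQ\cd GV$ follows cleanly. Indeed $S$ is the intersection of $OQ$ with its image $O'Q'$ under the involution $\eta$, so $\eta(S)\in\eta(OQ)\cap\eta(O'Q')=O'Q'\cap OQ=\{S\}$, whence $\eta(S)=S$ and $S$ lies on the axis of $\eta$. Because $\eta$ commutes with the complement map $K$ and $G$ is the unique fixed point of $K$, we have $\eta(G)=G$, so this axis passes through $G$; identifying it with the line $GV$ then yields $S\in GV$ and hence $S=OQ\cd GV$. Thus the entire proof reduces to the single geometric input that the center of $\textsf{M}$ is fixed by $\eta$, realized concretely as its lying on the $\eta$-axis $GV$.
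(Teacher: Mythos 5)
Your skeleton is the same as the paper's: place the center on the $\textsf{M}$-invariant line $OQ$ via $\textsf{M}(O)=Q$, place it on the second line $O'Q'$, and then get $S\in GV$ from $\eta$-invariance (the paper also checks, as you implicitly must, that $OQ$ and $O'Q'$ are distinct lines: otherwise $O,O',Q,Q'$ would be collinear, hence $H,H',P,P'$ would be four distinct collinear points on the conic $\Cp$, which is absurd). The problem is that the step you yourself flag as ``the main obstacle'' --- the $\textsf{M}$-invariance of $O'Q'$ --- is the entire content of the theorem, and your proposal does not actually prove it. The paper establishes $\textsf{M}(O')=Q'$ by proving the symmetric identity $\textsf{M}'(O)=Q$ and then interchanging $P$ and $P'$: writing $\textsf{M}'(O)=T_{P'}K^{-1}\la^{-1}(K(Q))$ and using that $K(Q)$ is the midpoint of $P'Q$, the relations $\la^{-1}(P')=Q$ and $\la^{-1}(Q)=H$ from Theorem \ref{thm:lambda} show that $K^{-1}\la^{-1}(K(Q))$ is the midpoint of $P'K^{-1}(H)$, which by the Lemma immediately preceding the theorem is $\tilde H=T_{P'}^{-1}(Q)$; hence $\textsf{M}'(O)=T_{P'}(\tilde H)=Q$. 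Your ``computational alternative'' points at some of the same ingredients but stops at ``one then checks that both images lie on $O'Q'$''; worse, one of the two facts you would need, namely $\textsf{M}(Q')=T_P(P)\in O'Q'$, appears in the paper as Proposition \ref{prop:PhiP}(c) and is there deduced \emph{from} Theorem \ref{thm: FixM}, so as written that route is circular. Until $\textsf{M}(O')=Q'$ (or some equivalent) is actually derived, the proof is incomplete.

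A secondary error: you dismiss the translation case on the grounds that the statement ``asserts a finite center,'' but it does not. When $\textsf{M}$ is a translation the lines $OQ$, $O'Q'$, $GV$ are parallel and invariant, $S=OQ\cd GV$ is their common point at infinity, and that point is still the center of $\textsf{M}$; the paper handles this case explicitly with the same two-invariant-lines argument.
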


\noindent {\bf Remark.} The point $S$ is a generalization of the insimilicenter, since it is the fixed point of the map taking the circumconic to the inconic.  See \cite{ki2}.  \smallskip

\begin{proof}
Assume first that $\textsf{M}$ is a homothety.  The fixed point $S$ of $\textsf{M}$ lies on $OQ$, since the proof of Theorem \ref{thm: CtoI} gives that $\textsf{M}(O)=Q$.  Note that $O \neq Q$, since $T_{P'}(Q)=P' \neq K(Q)$, by I, Theorem 3.7.  We claim that $\textsf{M}(O')=Q'$ also.  We shall prove the symmetric statement $\textsf{M}'(O)=Q$, where $\textsf{M}'=T_{P'}K^{-1}T_P$.  We have that
\[\textsf{M}'(O)=T_{P'} K^{-1} T_P(T_{P'}^{-1}K(Q))=T_{P'}K^{-1} \lambda^{-1}(K(Q)).\]
Now $K(Q)$ is the midpoint of $P'Q$, so $K^{-1} \lambda^{-1}(K(Q))$ is the midpoint of $K^{-1} \lambda^{-1}(P'Q)=K^{-1}(QH) = P'K^{-1}(H)$ (Theorem \ref{thm:lambda}), and therefore coincides with the point $\tilde{H}$, by the lemma.  Therefore, by (3), we have
\[\textsf{M}'(O)=T_{P'}(\tilde{H})=Q.\]
Switching $P$ and $P'$ gives that $\textsf{M}(O')=Q'$, as claimed.  Therefore, $S=OQ \cdot O'Q'$.  Note that the lines $OQ$ and $O'Q'$ are distinct. If they were not distinct, then $O, O', Q, Q'$ would be collinear, and applying $K^{-1}$ would give that $H, H', P', P$ are collinear, which is impossible since these points all lie on the cevian conic $\Cp$.  (Certainly, $H \neq P$, since otherwise $O=T_{P'}^{-1}K(Q)=K(P)=Q'$, forcing $K(Q)=T_{P'}(Q')=Q'=K(P)$ and $P=Q$.  Similarly, $H \neq P'$, so these are four distinct points.)  This shows that $\eta(S)=S$, so $S$ lies on $GV$ and $S=OQ \cdot GV$. \medskip

If $\textsf{M}$ is a translation, then it has no ordinary fixed points, and the same arguments as before give that $OQ \pa O'Q' \pa GV$ and these lines are fixed by $\textsf{M}$.  But then $\textsf{M}$ is a translation along $GV$, so its center is again $S=OQ \cdot GV=OQ \cdot O'Q'$.
\end{proof}

\begin{prop}
\label{thm: Zfixed}
If $P$ does not lie on a median of $ABC$, then $T_PK^{-1}(Z)=Z$.
\end{prop}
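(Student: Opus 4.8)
The plan is to show that $T_P K^{-1}(Z)$ lies on both of the lines $GV$ and $T_P(GV)$, so that it must coincide with their intersection $Z = GV \cdot T_P(GV)$. That $T_P K^{-1}(Z)$ lies on $T_P(GV)$ is immediate: since $K^{-1}$ is the homothety centered at the centroid $G$, which lies on $GV$, and $Z \in GV$, the point $K^{-1}(Z)$ again lies on $GV$, whence $T_P K^{-1}(Z) \in T_P(GV)$. The real work is to prove $T_P K^{-1}(Z) \in GV$.

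For this I would first establish that the three lines $GV$, $T_P(GV)$, and $T_{P'}(GV)$ are concurrent at $Z$; in particular that $Z \in T_{P'}(GV)$. The affine reflection $\eta$ of Part II has axis $GV$ and satisfies $\eta T_P = T_{P'}\eta$ (this is the relation from II, Theorem 2.4 together with the fact that $\eta$ commutes with the complement map, as used in the proof of Theorem \ref{thm:K(Q)}(c)). Since $\eta$ fixes $GV$ pointwise, we get $T_P(GV) = \eta T_{P'}\eta(GV) = \eta(T_{P'}(GV))$, so $\eta$ interchanges the lines $T_P(GV)$ and $T_{P'}(GV)$. Their point of intersection is therefore fixed by $\eta$ and so lies on the axis $GV$; being a point of $GV \cap T_P(GV)$, it equals $Z$. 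Hence $Z \in T_{P'}(GV)$.

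Granting this, I would set $Y = T_{P'}^{-1}(Z)$, which lies on $GV$ because $Z \in T_{P'}(GV)$. Then $T_P K^{-1}(Z) = T_P K^{-1} T_{P'}(Y) = \textsf{M}(Y)$, where $\textsf{M} = T_P K^{-1} T_{P'}$ is the map of Theorem \ref{thm: CtoI}. By Theorem \ref{thm: FixM} the center $S = OQ \cdot GV$ of the homothety $\textsf{M}$ lies on $GV$ (and in the translation case $\textsf{M}$ is a translation along $GV$), so $\textsf{M}$ maps the line $GV$ to itself. As $Y \in GV$, this yields $T_P K^{-1}(Z) = \textsf{M}(Y) \in GV$. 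Combining this with the first paragraph gives $T_P K^{-1}(Z) \in GV \cap T_P(GV) = \{Z\}$, i.e. $T_P K^{-1}(Z) = Z$.

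I expect the main obstacle to be the concurrency step, namely verifying cleanly that $Z \in T_{P'}(GV)$ and that the lines involved are genuinely distinct, so that each intersection is a single point; the hypothesis that $P$ does not lie on a median of $ABC$ should guarantee $T_P(GV) \neq GV$ and rule out these degeneracies. The case in which $Z$ is infinite (so $GV \pa T_P(GV)$) must be treated separately, but there $K^{-1}$ fixes $Z$ and $T_P$ sends the common infinite point of $GV$ to that of the parallel line $T_P(GV)$, so $T_P K^{-1}(Z) = T_P(Z) = Z$ directly. Finally, once $T_P K^{-1}(Z) = Z$ is established, applying $\eta$ and using $\eta T_P = T_{P'}\eta$ with $\eta(Z) = Z$ immediately produces the companion identity $T_{P'} K^{-1}(Z) = Z$, which is precisely what is needed to make $Z$ the fixed point of $\Phi_P = T_P K^{-1} T_{P'} K^{-1}$ in the Generalized Feuerbach Theorem.
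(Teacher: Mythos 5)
Your argument is essentially the paper's own proof in different clothing: both rest on $Z = GV \cdot T_P(GV) = GV \cdot T_{P'}(GV)$ (II, Theorem 4.1 together with the symmetry of $Z$ in $P$ and $P'$), on the fact that $K^{-1}$ fixes the line $GV$, and on Theorem \ref{thm: FixM} to conclude that $\textsf{M}=T_PK^{-1}T_{P'}$ maps $GV$ to itself. The paper simply applies $T_PK^{-1}$ to the intersection $GV \cdot T_{P'}(GV)$ in one line, whereas you verify membership of $T_PK^{-1}(Z)$ in the two lines $T_P(GV)$ and $GV$ separately; your detour through $\eta$ to establish $Z \in T_{P'}(GV)$ is correct but unnecessary, since the symmetric form of II, Theorem 4.1 gives it directly.

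The one genuine omission is that your argument covers only the case in which $P$ and $P'$ are both ordinary and $P$ is not on $\iota(l_\infty)$: Theorem \ref{thm: FixM} (and II, Theorem 4.1) carry exactly those hypotheses, while the proposition assumes only that $P$ avoids the medians. The paper treats the remaining cases separately: if $P$ lies on $\iota(l_\infty)$, then $T_P$ is a translation along $GG_1$ and the claim follows from II, Theorem 4.3; if $P$ is infinite, then $T_PK^{-1}=T_{P'}^{-1}K^{-2}$ by I, Theorem 3.14, and this map fixes $Z$ by II, Theorem 4.3. Your separate discussion of the case ``$Z$ infinite'' is not the issue --- the intersection of two distinct lines is a single (possibly infinite) point projectively, so the main computation already covers it --- but the degenerate positions of $P$ itself do need the extra arguments above.
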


\begin{proof}
We first use II, Theorem 4.1, when $P$ and $P'$ are ordinary.  The point $Z$ is defined symmetrically with respect to $P$ and $P'$, since it is the center of the conic $\Cp=\C_{P'}$.  Therefore II, Theorem 4.1 yields $Z=GV \cdot T_{P'}(GV)$, so the last theorem implies that
\[T_PK^{-1}(Z)=T_PK^{-1}(GV) \cdot T_PK^{-1}T_{P'}(GV)=T_P(GV) \cdot GV = Z,\]
since the point $S$ lies on $GV$.  If $P$ lies on $\iota(l_\infty)$, then $T_PK^{-1}(Z)=Z$ follows immediately from II, Theorem 4.3 and the proof of II, Theorem 2.7 (in the case that $P'$ is infinite), since $T_P$ is a translation along $GG_1$ taking $G$ to $G_1=T_P(G)$.  If $P$ is infinite, then $P'$ lies on $\iota(l_\infty)$, in which case $T_PK^{-1}=T_{P'}^{-1}K^{-2}$ by I, Theorem 3.14.  This mapping fixes $Z$ by II, Theorem 4.3.
\end{proof}

\begin{prop}
\label{thm: ZonNh}
If $P$ does not lie on a median of $ABC$, then $Z$ lies on the $9$-point conic $\Nh$ of the quadrangle $ABCH$, and $K^{-1}(Z)$ lies on $\tilde{\C}_O$.
\end{prop}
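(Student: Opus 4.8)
The plan is to exploit the standard characterization of a nine-point conic as the locus of centers (poles of $l_\infty$) of the conics passing through the four vertices of its quadrangle, and to use $\Cp$ itself as the witnessing conic. First I would observe that, by Theorem \ref{thm:HonCp}, the generalized orthocenter $H$ lies on $\Cp=ABCPQ$; hence $\Cp$ is a conic through all four vertices $A,B,C,H$ of the quadrangle $ABCH$, and its center is $Z$. Since $\Nh$ is exactly the locus of centers of conics on $ABCH$, this forces $Z\in\Nh$. Then, because $\Nh=K(\tilde{\C}_O)$ by Theorem \ref{thm:NH}, applying $K^{-1}$ gives $K^{-1}(Z)\in\tilde{\C}_O$; and this relation makes the two assertions of the proposition equivalent, so a single verification suffices.

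This short route invokes Theorem \ref{thm:HonCp}, which excludes $P\in\iota(l_\infty)$, whereas the hypothesis here only forbids $P$ from lying on a median. To cover that remaining case I would instead route the same incidence through $\Npp$, using only Theorem \ref{thm:K(Q)}(b) and Proposition \ref{thm: Zfixed}, both valid for every $P$ off the medians. The key steps are: (i) since $ABCP'$ is inscribed in $\Cp=\C_{P'}$, its center $Z$ lies on the nine-point conic $\Npp$ of $ABCP'$, as noted just before Theorem \ref{thm:K(Q)}; (ii) because $Z$ is symmetric in $P$ and $P'$, interchanging the two points in the computation of Proposition \ref{thm: Zfixed} yields $T_{P'}K^{-1}(Z)=Z$ just as that proof gave $T_PK^{-1}(Z)=Z$; (iii) combining (i) and (ii), $T_{P'}\bigl(K^{-1}(Z)\bigr)=Z\in\Npp$, which by Theorem \ref{thm:K(Q)}(b) says precisely $K^{-1}(Z)\in T_{P'}^{-1}(\Npp)=\tilde{\C}_O$; and (iv) applying $K$ and using $\Nh=K(\tilde{\C}_O)$ then gives $Z\in\Nh$.

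The hard part will not be any new idea but the bookkeeping in the degenerate configurations. I would need to check that each link in the chain $Z\in\Npp\Rightarrow T_{P'}K^{-1}(Z)\in\Npp\Rightarrow K^{-1}(Z)\in\tilde{\C}_O$ survives when $P'$, and hence $Q,O,H$ and possibly $Z$, is infinite; this should hold because $K$ and $T_{P'}$ act projectively and preserve all the incidences involved, and because Proposition \ref{thm: Zfixed} was established separately for exactly the cases $P\in\iota(l_\infty)$ and $P$ infinite. I would also confirm that $\Nh=K(\tilde{\C}_O)$ is available throughout: it comes from Theorem \ref{thm:NH} when $H$ is not a vertex of $ABC$, and from the defining convention $\Nh=K(\tilde{\C}_O)$ when it is.
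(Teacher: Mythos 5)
Your main argument (steps (i)--(iv) of the second paragraph) is exactly the paper's proof: $Z\in\Npp$ because $ABCP'$ is inscribed in $\Cp=\C_{P'}$, then $K^{-1}(Z)=T_{P'}^{-1}(Z)$ by Proposition \ref{thm: Zfixed} applied with $P'$ in place of $P$, so $K^{-1}(Z)\in T_{P'}^{-1}(\Npp)=\tilde{\C}_O$ by Theorem \ref{thm:K(Q)}(b), and finally $Z\in K(\tilde{\C}_O)=\Nh$. The locus-of-centers shortcut in your first paragraph is a reasonable alternative when $P\notin\iota(l_\infty)$, but you correctly identify its limitation and settle on the same route the paper uses.
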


\begin{proof}
As we remarked in the paragraph just before Theorem \ref{thm:K(Q)}, the point $Z$ lies on $\Npp$.  Theorem \ref{thm:K(Q)} implies that $T_{P'}^{-1}(Z)$ lies on $T_{P'}^{-1}(\Npp)=\tilde{\C}_O$.  By Proposition \ref{thm: Zfixed}, with $P'$ in place of $P$, $T_{P'}^{-1}(Z)=K^{-1}(Z)$.  Since $K^{-1}(Z)$ lies on $\tilde{\C}_O$, the point $Z$ lies on $K(\tilde{\C}_O)=\Nh$.
\end{proof}

\begin{prop}
\label{prop:PhiP}
\begin{enumerate}[a)]
\item The map $\Phi_P=\textsf{M} \ \circ K^{-1}$ satisfies
\[\Phi_P(K(S))=S, \ \Phi_P(N)=Q, \ \textrm{and} \ \ \Phi_P(K(Q'))=T_P(P).\]
The center of the homothety or translation $\Phi_P$ is the common intersection of the lines $GV, NQ,$ and $K(Q')T_P(P)$.
\item Also, $\Phi_P = \Phi_{P'}$, and the maps $T_PK^{-1}$ and $T_{P'}K^{-1}$ commute with each other.
\item $T_{P'}(P')$ lies on the line $OQ$, and $T_P(P)$ lies on $O'Q'$.
\end{enumerate}
\end{prop}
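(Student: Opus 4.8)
The plan is to reduce all three parts to the behavior of the homothety-or-translation $\textsf{M}=T_PK^{-1}T_{P'}$ of Theorem \ref{thm: CtoI}, for which the earlier sections already give $\textsf{M}(O)=Q$ (proof of Theorem \ref{thm: CtoI}), $\textsf{M}(O')=Q'$, and that its center is $S=OQ\cd O'Q'$ lying on $GV$ (Theorem \ref{thm: FixM}). The only extra ingredients I would invoke are the fixed-point relations $T_P(Q)=Q$ (used in the proof of Theorem \ref{thm:HO}) and, by interchanging $P$ and $P'$, $T_{P'}(Q')=Q'$, together with the identities $N=K(O)$ (proof of Theorem \ref{thm:NH}), $Q'=K(P)$, and $P'=K^{-1}(Q)$.

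For part a) I would write $\Phi_P=\textsf{M}\circ K^{-1}$, so that $\Phi_P$ is a homothety or translation (a composite of such maps), and evaluate each image by applying $K^{-1}$ first and then $\textsf{M}$. Since $K^{-1}K=\mathrm{id}$ this gives $\Phi_P(K(S))=\textsf{M}(S)=S$ because $S$ is the fixed point of $\textsf{M}$; $\Phi_P(N)=\textsf{M}(K^{-1}(N))=\textsf{M}(O)=Q$ because $N=K(O)$; and $\Phi_P(K(Q'))=\textsf{M}(Q')=T_PK^{-1}(Q')=T_P(P)$, where the penultimate step uses $T_{P'}(Q')=Q'$ and the last uses $K^{-1}(Q')=P$. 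Because a homothety or translation sends each point, its image, and its center into one line, the center of $\Phi_P$ lies on each of $S\,K(S)$, $NQ$, and $K(Q')T_P(P)$; and since $S$ and $G$ both lie on $GV$ and $K$ fixes every line through its center $G$, we have $K(S)\in GV$, so the first of these lines is $GV$. Hence the center is the common point of $GV$, $NQ$, and $K(Q')T_P(P)$.

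For part b) I would observe that $\Phi_{P'}=T_{P'}K^{-1}T_PK^{-1}=\textsf{M}'K^{-1}$ with $\textsf{M}'=T_{P'}K^{-1}T_P$, so that $\Phi_P=\Phi_{P'}$ is equivalent to $\textsf{M}=\textsf{M}'$, which is in turn exactly the assertion that $T_PK^{-1}$ and $T_{P'}K^{-1}$ commute. To prove $\textsf{M}=\textsf{M}'$ I would compare linear parts and one value. Since $\textsf{M}$ is a homothety or translation its linear part is a scalar $c\cdot\mathrm{id}$; as the linear part of $K^{-1}$ is $-2\cdot\mathrm{id}$, this forces $\vec{T_P}\vec{T_{P'}}=(-c/2)\,\mathrm{id}$ to be scalar, and then $\vec{T_{P'}}\vec{T_P}$ is the \emph{same} scalar (if $\vec{T_P}\vec{T_{P'}}=d\cdot\mathrm{id}$ then $\vec{T_{P'}}=d\,\vec{T_P}^{-1}$, so $\vec{T_{P'}}\vec{T_P}=d\cdot\mathrm{id}$), so $\textsf{M}$ and $\textsf{M}'$ have equal linear parts. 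They also agree at $O$, since $\textsf{M}(O)=Q=\textsf{M}'(O)$ by Theorems \ref{thm: CtoI} and \ref{thm: FixM}, and two affine maps with the same linear part that agree at a point are identical. I expect this to be the main obstacle: $\textsf{M}$ and $\textsf{M}'$ cannot in general be identified by pure point-chasing (one would need agreement at two points, but $O=O'$ can occur when $O$ lies on the axis $GV$ of $\eta$), so the crux is precisely the observation that a scalar product $\vec{T_P}\vec{T_{P'}}$ forces the reversed product to be the same scalar.

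For part c) I would use $\textsf{M}=\textsf{M}'$ from b). A direct computation gives $\textsf{M}(Q)=\textsf{M}'(Q)=T_{P'}K^{-1}T_P(Q)=T_{P'}K^{-1}(Q)=T_{P'}(P')$, using $T_P(Q)=Q$ and $K^{-1}(Q)=P'$, while $\textsf{M}(Q')=T_P(P)$ as in part a). Now $\textsf{M}(O)=Q$ forces $\textsf{M}$ to send line $OQ$ to the line through $Q$ parallel to it, namely $OQ$ itself, so $OQ$ is $\textsf{M}$-invariant; since $Q\in OQ$ we conclude $T_{P'}(P')=\textsf{M}(Q)\in OQ$. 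Applying the same argument to the $\textsf{M}$-invariant line $O'Q'$ (using $\textsf{M}(O')=Q'$) with the point $Q'\in O'Q'$ yields $T_P(P)=\textsf{M}(Q')\in O'Q'$. Once $\textsf{M}=\textsf{M}'$ is in hand, parts a) and c) are thus short consequences of the already-established action of $\textsf{M}$ on $O,O',Q,Q',$ and $S$.
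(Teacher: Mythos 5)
Your proposal is correct, and parts a) and c) run essentially parallel to the paper's own proof: the same evaluations $\Phi_P(K(S))=\textsf{M}(S)=S$, $\Phi_P(N)=\textsf{M}(O)=Q$, $\Phi_P(K(Q'))=\textsf{M}(Q')=T_P(P)$ using $T_{P'}(Q')=Q'$ and $K^{-1}(Q')=P$, and for c) the observation that $\textsf{M}$ (resp.\ $\textsf{M}'$), being a homothety or translation sending $O$ to $Q$, fixes the line $OQ$ and hence carries $Q$ into it; the paper phrases this as $\Phi_P(K(S)N)=SQ=OQ$, which is the complemented version of your computation. Where you genuinely diverge is part b). The paper proves $\Phi_{P'}=\eta\,\Phi_P\,\eta$ by pushing $\eta$ through the composition (using $\eta T_P=T_{P'}\eta$ and that $\eta$ commutes with $K^{-1}$), then identifies $\Phi_P$ with $\Phi_{P'}$ as perspective collineations sharing the axis $l_\infty$, the center $F=\eta(F)$ on $GV$, and the corresponding pair $\{K(S),S\}$, citing the uniqueness theorem in [co2]. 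You instead compare linear parts: since $\textsf{M}$ is a homothety or translation, $\vec{T_P}\vec{T_{P'}}$ is scalar, hence $\vec{T_{P'}}\vec{T_P}$ is the \emph{same} scalar, so $\textsf{M}$ and $\textsf{M}'$ have equal linear parts; combined with $\textsf{M}(O)=Q=\textsf{M}'(O)$ (the latter proved inside Theorem \ref{thm: FixM}), equality follows. This is a valid and arguably cleaner argument --- it avoids the projective machinery of homologies and elations and needs no appeal to the center lying on $GV$ --- though it trades the paper's purely synthetic style for a linear-algebraic one (linear parts of affine maps, scalar matrices commuting through products), and it loses the explicit conjugation identity $\Phi_{P'}=\eta\Phi_P\eta$, which is itself a useful by-product of the paper's route. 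Your two minor implicit assumptions --- that $S\neq K(S)$ when naming the line $SK(S)$, and that $O\neq Q$, $O'\neq Q'$ when identifying the invariant lines in c) --- are harmless (the latter is checked in the proof of Theorem \ref{thm: FixM}), and the paper glosses over the former as well.
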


\begin{proof}
a) The map $\Phi_P$ is a homothety or translation by the same argument as in Theorem \ref{thm: CtoI}.  We have $\Phi_P(K(S))=\textsf{M}(S)=S$, while
\begin{align*}
\Phi_P(N)&=T_P \circ K^{-1} \circ T_{P'} \circ K^{-1}(K(O)) = T_P K^{-1} T_{P'}(O)\\
&=T_PK^{-1}(K(Q))=T_P(Q)=Q,
\end{align*}
and
\begin{align*}
\Phi_P(K(Q'))&=T_P \circ K^{-1} \circ T_{P'} \circ K^{-1}(K(Q')) = T_P K^{-1} T_{P'}(Q')\\
&=T_PK^{-1}(Q')=T_P(P).
\end{align*}
It follows that $\Phi_P$ fixes the three lines $GV, NQ$, and $K(Q')T_P(P)$. \smallskip

b) Note that
\begin{align*}
\eta \Phi_P &= \eta \circ T_P \circ K^{-1} \circ T_{P'} \circ K^{-1}\\
&=T_{P'} \circ \eta \circ K^{-1} \circ T_{P'} \circ K^{-1} = T_{P'}  \circ K^{-1} \circ T_{P} \circ K^{-1} \circ \eta\\
&=\Phi_{P'} \eta,
\end{align*}
since $\eta$ and $K^{-1}$ commute.  On the other hand, the center of $\Phi_P$ lies on the line $GV$, which is the line of fixed points for the affine reflection $\eta$.  It follows that $\eta \Phi_P \eta=\Phi_{P'}$ has $l_\infty$ as its axis and $\eta(F)=F$ as its center, if $F$ is the center of $\Phi_P$ (a homology or an elation).  But both maps $\Phi_P$ and $\Phi_{P'}$ share the pair of corresponding points $\{ K(S), S \}$, also lying on $GV$.  Therefore, the two maps must be the same.  (See [co2, pp. 53-54].) \smallskip

c) From a) and b) we have $\Phi_P(K(Q))=\Phi_{P'}(K(Q))=T_{P'}(P')$.  On the other hand, $S$ on $OQ$ implies that $K(S)$ lies on $NK(Q)$, so $K(Q)$ lies on the line $K(S)N$.   Mapping by $\Phi_P$ and using a) shows that $T_{P'}(P')$ lies on $\Phi_P(K(S)N) = SQ=OQ$.
\end{proof}

\noindent {\bf The Generalized Feuerbach Theorem.} \smallskip
{\it If $P$ does not lie on a median of triangle $ABC$, the map
\[\Phi_P=\textsf{M} \circ K^{-1}=T_P \circ K^{-1} \circ T_{P'} \circ K^{-1}\]
takes the $9$-point conic $\Nh$ to the inconic $\mathcal{I}$ and fixes the point $Z$, the center of $\Cp$.  Thus, $Z$ lies on $\mathcal{I}$, and the conics $\Nh$ and $\mathcal{I}$ are tangent to each other at $Z$.  The same map $\Phi_P$ also takes the $9$-point conic $\mathcal{N}_{H'}$ to the inconic $\mathcal{I}'$ which is tangent to the sides of $ABC$ at $D_3, E_3, F_3$.  The point $Z$ is the center of the map $\Phi_P$ (a homology or elation).}  \medskip

\begin{proof}
The mapping $\Phi_P$ takes $\Nh$ to $\mathcal{I}$ by Theorems \ref{thm:NH} and \ref{thm: CtoI}.  Applying Proposition \ref{thm: Zfixed} to the points $P'$ and $P$, we see that $\Phi_P$ fixes $Z$, so $Z$ lies on $\mathcal{I}$ by Proposition \ref{thm: ZonNh}.  First assume $Z$ is an ordinary point.  As a homothety with center $Z$, $\Phi_P$ fixes any line through $Z$, and therefore fixes the tangent $t$ to $\Nh$ at $Z$.  Since tangents map to tangents, $t$ is also the tangent to $\mathcal{I}$ at $Z$, which proves the theorem in this case.  If $Z \in l_\infty$ and $\Nh$ is a parabola, the same argument applies, since the tangent to $\Nh$ at $Z$ is just $l_\infty=\Phi_P(l_\infty)$.  Assume now that $\Nh$ is a hyperbola.  Then $Z$ must be a point on one of the asymptotes $t$ for $\Nh$, which is also the tangent at $Z$.  But $Z$ is on the line $GV$, and by Proposition \ref{prop:PhiP} the center of $\Phi_P$ lies on $GV$.  It follows that if $\Phi_P$ is a translation, it is a translation along $GV$, and therefore fixes the parallel line $t$.  This will prove the assertion if we show that $\Phi_P$ cannot be a homothety when $Z$ is infinite, i.e., it has no ordinary fixed point.  Let $X$ be a fixed point of $\Phi_P$ on the line $GV$.  Writing $\Phi_P=\textsf{M}_1\textsf{M}_2$, with $\textsf{M}_1=T_P \circ K^{-1}$ and $\textsf{M}_2=T_{P'} \circ K^{-1}$, we have by part b) of Proposition \ref{prop:PhiP} that
\[\Phi_P(\textsf{M}_1(X))=\textsf{M}_1(\textsf{M}_2\textsf{M}_1(X))=\textsf{M}_1(X),\]
so $\textsf{M}_1(X)$ is a fixed point of $\Phi_P$ on the line $\textsf{M}_1(GV)=T_P(GV)$.  Assuming $X$ is ordinary, this shows that $\textsf{M}_1(X)=X$, since a nontrivial homothety has a unique ordinary fixed point.  Hence, $X \in GV \cdot T_P(GV)$.  But $Z$ is infinite and $Z=GV \cdot T_P(GV)$, so this is impossible.  Thus, $\Phi_P$ has no ordinary fixed point in this case and its center is $Z$.
\end{proof}

\begin{cor}
\label{thm:genFeuer}
\begin{enumerate}[a)]
\item If $N=K(O)$ is the midpoint of segment $OH$, the center $Z$ of $\Cp$ lies on the line $QN$, and $Z=GV \cdot QN$.
\item The point $K^{-1}(Z)$ lies on the line $OP'$, so that $K^{-1}(Z)=GV \cdot OP'$ lies on $\tilde{\C}_O$.  This point is the center of the anticevian conic $T_P^{-1}(\Cp)$. (See II, Theorem 3.3.)
\item If $Z$ is infinite and the conics $\Nh$ and $\mathcal{I}$ are hyperbolas, the line $QN$ is a common asymptote of $\Nh$ and $\mathcal{I}$.
\end{enumerate}
\end{cor}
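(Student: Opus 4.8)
The plan is to read off all three parts from the Generalized Feuerbach Theorem together with Proposition \ref{prop:PhiP}, exploiting the $P \leftrightarrow P'$ symmetry (which is available because $\Phi_P = \Phi_{P'}$ and the point $Z$, as the center of $\Cp = \C_{P'}$, is symmetric in $P$ and $P'$). The essential structural input is that the Generalized Feuerbach Theorem identifies $Z$ as the center of the homology (or elation) $\Phi_P$, while Proposition \ref{prop:PhiP}(a) locates that center as the common point of three explicit lines; almost everything else is bookkeeping with identities already proved.

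For part a) I would simply combine these two facts. Since $Z$ is the center of $\Phi_P$, and Proposition \ref{prop:PhiP}(a) shows this center lies on $GV$, on $NQ$, and on $K(Q')T_P(P)$, we immediately obtain $Z \in QN$ and $Z \in GV$, whence $Z = GV \cdot QN$. That $N = K(O)$ is the midpoint of $OH$ is already recorded in Theorem \ref{thm:NH} (where $O = K(H)$), so no extra work is needed there.

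For part b) the first observation is that the center of the affine image $T_P^{-1}(\Cp)$ is $T_P^{-1}(Z)$, and by Proposition \ref{thm: Zfixed} (that is, $T_P K^{-1}(Z) = Z$) this equals $K^{-1}(Z)$; this identifies $K^{-1}(Z)$ as the center of the anticevian conic. To place $K^{-1}(Z)$ on $OP'$, I would apply Proposition \ref{prop:PhiP}(a) to $P'$, using $\Phi_P = \Phi_{P'}$: the center $Z$ of $\Phi_{P'}$ then lies on the line $K(Q)T_{P'}(P')$. Since $O = T_{P'}^{-1}K(Q)$ gives $T_{P'}(O) = K(Q)$, this line is exactly $T_{P'}(OP')$, so $Z \in T_{P'}(OP')$; applying $T_{P'}^{-1}$ and using $T_{P'}^{-1}(Z) = K^{-1}(Z)$ (Proposition \ref{thm: Zfixed} for $P'$) yields $K^{-1}(Z) \in OP'$. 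Because $Z \in GV$ and $K^{-1}$ is a homothety centered at $G \in GV$, we also have $K^{-1}(Z) \in GV$, whence $K^{-1}(Z) = GV \cdot OP'$; that this point lies on $\tilde{\C}_O$ is Proposition \ref{thm: ZonNh}.

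For part c) I would invoke the hyperbola case of the Generalized Feuerbach Theorem, in which $Z$ is an infinite point of both $\Nh$ and $\mathcal{I}$ and the common tangent to the two conics at $Z$ is a common asymptote. Since a tangent at an infinite point of a hyperbola is an asymptote passing through its center, this common tangent line passes through the center $N$ of $\Nh$ and through the center $Q$ of $\mathcal{I}$; being a single line through both $N$ and $Q$, it must be $QN$, which by part a) indeed passes through $Z$. The only place I expect to need genuine care is part b): correctly matching the $P \leftrightarrow P'$ swap in Proposition \ref{prop:PhiP}(a) and recognizing the line $K(Q)T_{P'}(P')$ as $T_{P'}(OP')$ via $T_{P'}(O) = K(Q)$. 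Once that identification is in hand, the remaining steps are routine applications of the fixed-point and incidence facts established earlier.
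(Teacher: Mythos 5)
Your proposal is correct, and parts a) and c) follow the paper's argument exactly: combine the Generalized Feuerbach Theorem (which makes $Z$ the center of $\Phi_P$) with Proposition \ref{prop:PhiP}(a) for a), and use the fact that an asymptote of a hyperbola is the tangent at an infinite point and hence passes through the center for c). The only divergence is in part b), where the paper gets $K^{-1}(Z) \in OP'$ in one line by observing that $OP' = K^{-1}(NQ)$ (since $K^{-1}(N)=O$ and $K^{-1}(Q)=P'$) and then applying $K^{-1}$ to the incidence $Z \in NQ$ from part a); you instead re-apply Proposition \ref{prop:PhiP}(a) with $P'$ in place of $P$, use $\Phi_P=\Phi_{P'}$ to place $Z$ on $K(Q)T_{P'}(P') = T_{P'}(OP')$, and pull back by $T_{P'}^{-1}$ via Proposition \ref{thm: Zfixed}. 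Your detour is valid (and the identifications $T_{P'}(O)=K(Q)$ and $T_{P'}^{-1}(Z)=K^{-1}(Z)$ are handled correctly), but the anticomplement shortcut is worth noticing, since the line $OP'$ in the statement is literally the anticomplement of the line $QN$ from part a); the remaining ingredients of b) --- $K^{-1}(Z)\in GV$, $K^{-1}(Z)\in\tilde{\C}_O$, and the identification of $K^{-1}(Z)=T_P^{-1}(Z)$ as the center of the anticevian conic --- match the paper's use of Propositions \ref{thm: ZonNh} and \ref{thm: Zfixed}.
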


\begin{proof}
For part a), Proposition \ref{prop:PhiP} shows that the center $Z$ of $\Phi_P$ lies on the line $NQ$.  For part b), just note that $K^{-1}(NQ)=OP'$ and $K^{-1}(\Nh)=\tilde{\C}_O$.  The last assertion follows from Proposition \ref{thm: Zfixed}.  For part c), the asymptote of $\Nh$ through $Z$ must lie on the center of $\Nh$, which is $N$, and the asymptote of $\mathcal{I}$ through $Z$ must lie on the center of $\mathcal{I}$, which is $Q$.  Therefore, the common asymptote is $QN$.
\end{proof}

\begin{figure}
\[\includegraphics[width=5in]{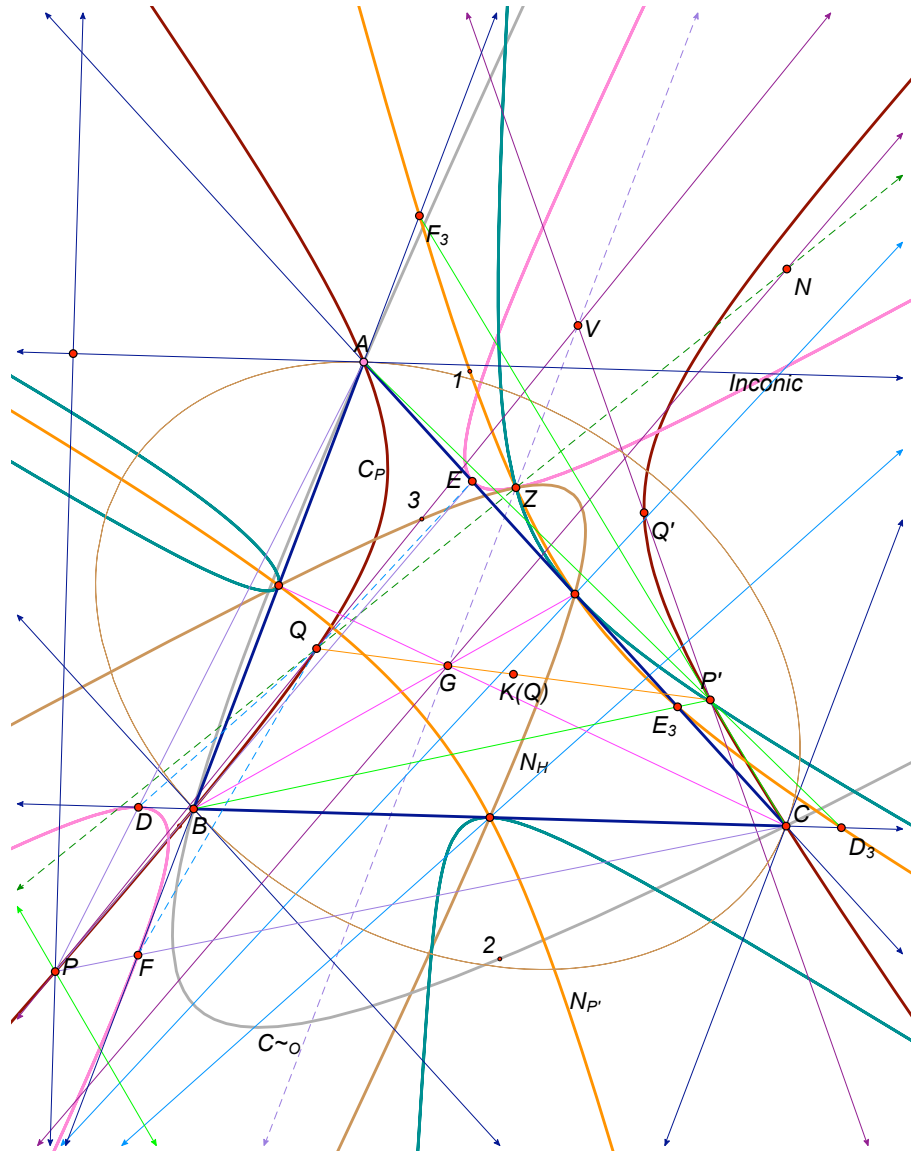}\]
\caption{$\Nh$ (light brown) and $\mathcal{I}$ (pink) tangent at $Z$. (The locus of $Z$ is pictured in teal as $P$ varies on a line.  The ellipse is the Steiner circumellipse.)}
\label{fig:3.2}
\end{figure}

The Generalized Feuerbach Theorem applies to all four points of Theorem \ref{thm:Pabc}, and therefore generalizes the full statement of Feuerbach's theorem in the case that $P$ is the Gergonne point.  Thus, $\Nh$ is tangent to four distinct conics, each of which is tangent to the sides of $ABC$, namely, the inconics corresponding to each of the points $P, P_a, P_b, P_c$.  Figure \ref{fig:3.2} shows the configuration in case $P$ is outside the Steiner circumellipse, in which case $\Nh, \tilde{\C}_O$, and $\mathcal{I}$ are hyperbolas.  The point marked $1$ is a general point on the conic $\Npp$, and the points marked $2$ and $3$ are the images of $1$ on $T_{P'}^{-1}(\Npp)=\tilde{\C}_O$ and on $K(\tilde{\C}_O)=\Nh$, respectively.  As $P$ varies on a line perpendicular to $BC$ in this picture, the locus of the point $Z$ is pictured in teal.  This locus consists of three branches which are each tangent to a side of $ABC$ at its midpoint.  Figure 4 pictures a situation in which $Z$ is infinite.  The point $P$ in this figure was found using the ratios $BD/BC=\frac{15}{16}$ and $BF/AF=\frac{6}{5}$.

\begin{thm}
\label{thm:tildeZ}
The point $\tilde{Z}=\textsf{R}_OK^{-1}(Z)$ is the fourth point of intersection of the conics $\Cp$ and $\tilde{\C}_O$, the other three points being the vertices $A,B,C$.
\end{thm}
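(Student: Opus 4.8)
The plan is to exhibit $\tilde{Z}$ as a point lying on \emph{both} conics. Since $\Cp$ and $\tilde{\C}_O$ are distinct conics passing through the three non-collinear points $A,B,C$, they meet in exactly one further point, which must then be $\tilde{Z}$. Each of the two incidence statements will follow from a single principle, namely that reflection in the center of a central conic carries that conic to itself, combined with the location results already established for $Z$ and $K^{-1}(Z)$.

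First I would show $\tilde{Z}$ lies on $\tilde{\C}_O$. By Proposition \ref{thm: ZonNh} the point $K^{-1}(Z)$ lies on $\tilde{\C}_O$. Since $O$ is the center of $\tilde{\C}_O$, the point reflection $\textsf{R}_O$ maps $\tilde{\C}_O$ onto itself, so $\tilde{Z}=\textsf{R}_OK^{-1}(Z)$ lies on $\tilde{\C}_O$ as well.

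The substance of the proof is the second incidence, $\tilde{Z}\in\Cp$, and it rests on the clean reformulation
\[\tilde{Z}=\textsf{R}_Z(H),\]
where $\textsf{R}_Z$ denotes reflection in the center $Z$ of $\Cp$ and $H$ is the generalized orthocenter. Granting this, the conclusion is immediate: $H$ lies on $\Cp$ by Theorem \ref{thm:HonCp}, and reflection in the center $Z$ of $\Cp$ preserves $\Cp$, so $\textsf{R}_Z(H)=\tilde{Z}$ lies on $\Cp$. To establish the reformulation synthetically I would use $H=K^{-1}(O)$ from Theorem \ref{thm:HO} together with the fact that $K^{-1}$ is the homothety (anti-complement map) centered at the centroid $G$ with ratio $-2$. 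Hence $K^{-1}$ carries the segment $OZ$ to the parallel segment $K^{-1}(O)\,K^{-1}(Z)=H\,K^{-1}(Z)$ of twice the length. By the definition of $\tilde{Z}$, the point $O$ is the midpoint of $\tilde{Z}\,K^{-1}(Z)$; thus $OZ$ is a midline of triangle $\tilde{Z}\,K^{-1}(Z)\,H$, parallel to the side $H\,K^{-1}(Z)$ and half its length, starting at the midpoint $O$ of the side $\tilde{Z}\,K^{-1}(Z)$. Its other endpoint $Z$ is therefore the midpoint of $\tilde{Z}H$, which is exactly $\tilde{Z}=\textsf{R}_Z(H)$. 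The orientation in the midline step (that $Z$, and not its reflection in $O$, is selected) is confirmed by the affine identity in which both $\textsf{R}_OK^{-1}(Z)$ and $\textsf{R}_Z(H)$ equal $2O+2Z-3G$, using $K^{-1}(Z)=3G-2Z$ and $H=K^{-1}(O)=3G-2O$.

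Finally I would record the bookkeeping needed for the phrase ``fourth point of intersection'' to be meaningful: the two conics must be distinct, which amounts to $Z\neq O$ (a central conic through $A,B,C$ with a prescribed center is unique, so $\Cp=\tilde{\C}_O$ would force $Z=O$, and conversely), and this holds in the generic situation governed by the standing hypotheses. Under distinctness, $A,B,C,\tilde{Z}$ exhaust the common points of $\Cp$ and $\tilde{\C}_O$, so $\tilde{Z}$ is the fourth intersection. The main obstacle is locating the correct reformulation $\tilde{Z}=\textsf{R}_Z(H)$ and fixing the orientation in the midline argument; once this identity is in hand both incidences are formal. I would also flag the degenerate possibilities — when $O$, $Z$, or $H$ is infinite, or when $\tilde{Z}$ coincides with a vertex so that $\Cp$ and $\tilde{\C}_O$ are tangent there — all of which are handled uniformly by the same affine identity $\tilde{Z}=2O+2Z-3G$.
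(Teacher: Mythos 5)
Your proof is correct, but it takes a genuinely different route from the paper's. The paper maps the whole configuration by $T_{P'}$ into the nine-point conic $\Npp$: using the conjugation $T_{P'}\circ\textsf{R}_O\circ T_{P'}^{-1}=\textsf{R}_{K(Q)}$ and Proposition \ref{thm: Zfixed} it identifies $T_{P'}(\tilde{Z})$ as the antipode of $Z$ on $\Npp$, then shows $\tilde{Z}$, $P'$, $T_{P'}(\tilde{Z})$ are collinear (via Corollary \ref{thm:genFeuer}(b), which puts both $\tilde Z$ and $T_{P'}(\tilde Z)$ on the line $OP'$), and finally invokes the collinearity criterion of II, Corollary 2.2(b) for membership in $\C_{P'}=\Cp$. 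You instead exploit the symmetry $\textsf{R}_X K^{-1}(Y)=\textsf{R}_Y K^{-1}(X)$ of the ratio $-2$ homothety $K^{-1}$ about $G$ (both equal $2X+2Y-3G$), so that $\tilde{Z}=\textsf{R}_OK^{-1}(Z)=\textsf{R}_ZK^{-1}(O)=\textsf{R}_Z(H)$, and then quote $H\in\Cp$ (Theorem \ref{thm:HonCp}) together with the central symmetry of $\Cp$ about $Z$; your identity checks out and the midline argument, properly oriented, makes it synthetic. Your version is shorter and isolates a clean structural fact (the symmetric roles of $O,Z$ under $\textsf{R}_\bullet K^{-1}$), at the cost of needing $Z$ and $O$ ordinary so that the point reflections exist; the paper's longer route yields as by-products the facts that $\tilde Z$ lies on the line $OP'$ and that $T_{P'}(\tilde Z)$ is the antipode of $Z$ on $\Npp$, which fit into the web of incidences developed in Section 3. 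One caveat on your closing remark: when $Z$ is infinite the map $\textsf{R}_Z$ is not defined and $\tilde Z=Z$ lies on $l_\infty$, so the case is not really ``handled uniformly by the same affine identity''---it needs a separate tangency argument (as does the paper's proof, which also tacitly assumes $Z$ ordinary); this is a loose end rather than an error in the main case.
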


\begin{proof}
Theorem \ref{thm:NH} and Proposition \ref{thm: ZonNh} show that $\tilde{Z}=\textsf{R}_OK^{-1}(Z)=\textsf{H}^{-1}(Z)$ lies on $\tilde{\C}_O$.  Since $T_{P'}$ maps $\tilde{\C}_O$ to $\Npp$, we know that the half-turns through the points $O$ and $K(Q)=T_{P'}(O)$ are conjugate by $T_{P'}$, namely:
\[T_{P'} \circ \textsf{R}_O \circ T_{P'}^{-1} = \textsf{R}_{K(Q)}.\]
Therefore, $T_{P'}(\tilde{Z})=T_{P'}\textsf{R}_OK^{-1}(Z)=T_{P'} \textsf{R}_O T_{P'}^{-1}(Z)=\textsf{R}_{K(Q)}(Z)$, the second equality following from Proposition \ref{thm: Zfixed}.  In other words, $Z$ and $T_{P'}(\tilde{Z})$ are opposite points on the conic $\Npp$.  Furthermore, $Z$ lies on $QN$, so $T_{P'}(\tilde{Z})$ lies on the parallel line $l=\textsf{R}_{K(Q)}(QN)$, and since $K(Q)$ is the midpoint of $QP'$, $l$ is the line through $P'$ parallel to $QN$, i.e. $l=OP'=K^{-1}(QN)$.  Hence $T_{P'}(\tilde{Z})$ lies on $OP'$, while Corollary \ref{thm:genFeuer}b) implies that $\tilde{Z}=\textsf{R}_OK^{-1}(Z)$ also lies on $OP'$.  Therefore, $\tilde{Z}, P'$, and $T_{P'}(\tilde{Z})$ are collinear.  Now II, Corollary 2.2b) implies that $\tilde{Z}$ lies on $\C_{P'}=\Cp$.  This shows that $\tilde{Z} \in \Cp \cap \tilde{\C}_O$.
\end{proof}

\begin{figure}
\[\includegraphics[width=5in]{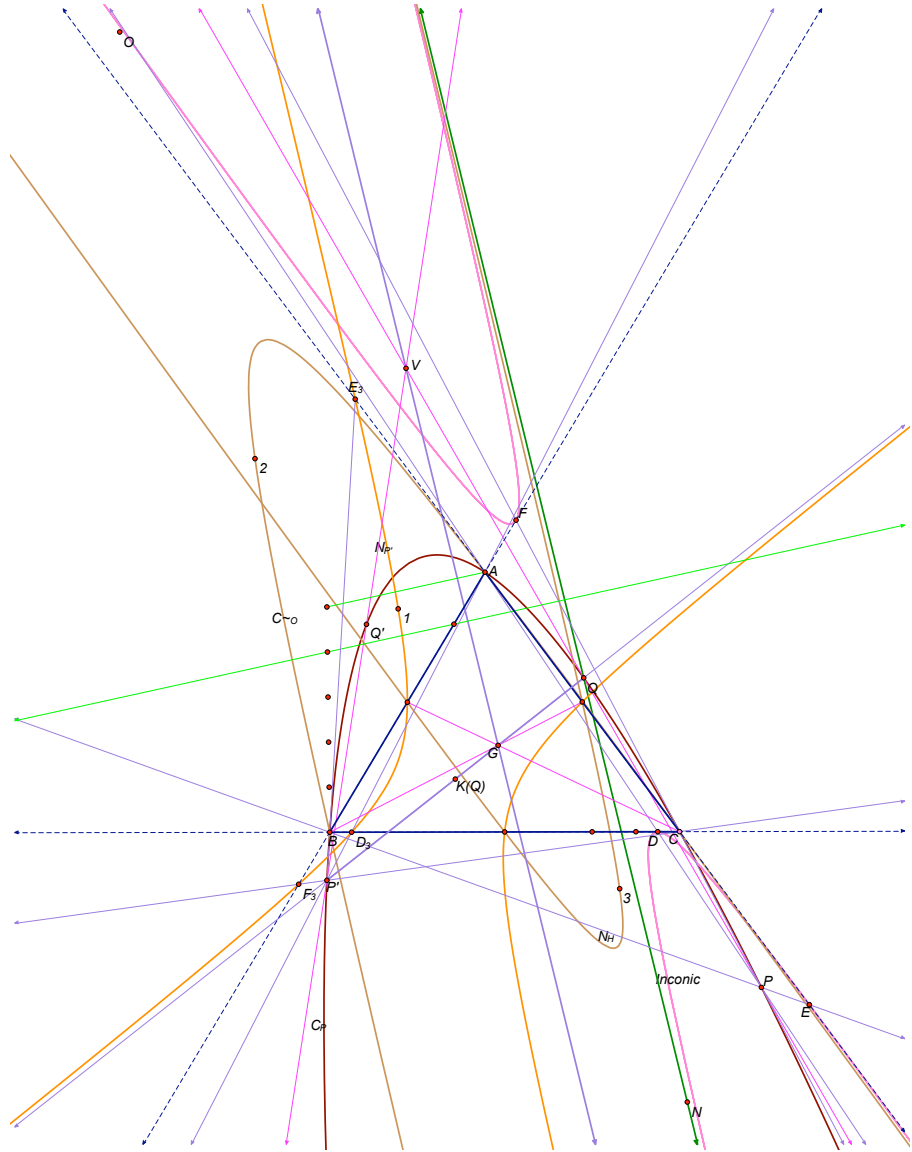}\]
\caption{$Z$ infinite, $\Nh$ (light brown) and $\mathcal{I}$ (pink) tangent to each other with the common asymptote $QN$.  $\Cp$ is the parabola in dark brown.}
\label{fig:3.3}
\end{figure}

\end{section}

\begin{section}{The special case $H=A, O=D_0$.}

We now consider the set of all points $P$ such that $H=A$ and $O=D_0$.  We start with a lemma.

\begin{lem}
Provided the generalized orthocenter $H$ of $P$ is defined, the following are equivalent:
\begin{enumerate}[(a)]
\item $H = A$.
\item $QE = AF$ and $QF = AE$.
\item $F_3$ is collinear with $Q$, $E_0$, and $K(E_3)$.
\item $E_3$ is collinear with $Q$, $F_0$, and $K(F_3)$.
\label{lem:EquivH=A}
\end{enumerate}
\end{lem}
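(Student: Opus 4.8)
The plan is to treat (a)$\iff$(b) as essentially definitional, then to strip (c) and (d) down to single incidences that can be matched against (b). \emph{For} (a)$\iff$(b): by definition $H$ is the common point of the line through $A$ parallel to $QD$, the line $\ell_B$ through $B$ with $\ell_B\pa QE$, and the line $\ell_C$ through $C$ with $\ell_C\pa QF$. Since $E=BP\cd AC$ lies on $AC$ and $F=CP\cd AB$ lies on $AB$, the joins $AE$ and $AF$ are just the lines $AC$ and $AB$. Hence $H=A$ holds exactly when $A\in\ell_B$ and $A\in\ell_C$, i.e. when $AB\pa QE$ and $AC\pa QF$, which is precisely the assertion of (b) (so in (b) the symbol ``$=$'' is to be read as $\pa$). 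This settles (a)$\iff$(b) with no computation.

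\emph{Reducing} (c) \emph{and} (d): I would exploit that $Q=K(P')$, $E_0=K(B)$, $F_0=K(C)$, together with $E_3\in BP'$ and $F_3\in CP'$, since these are traces of $P'$. Applying the dilatation $K$ to the line $BP'$, which carries $B,P',E_3$, produces the line through $K(B)=E_0$, $K(P')=Q$, and $K(E_3)$; thus $Q,E_0,K(E_3)$ are \emph{automatically} collinear, lying on $K(BP')$, and likewise $Q,F_0,K(F_3)$ lie on $K(CP')$. Therefore (c) is equivalent to the single further incidence $F_3\in K(BP')$, i.e. to $F_3$ lying on the line $QE_0$, and (d) to $E_3$ lying on $QF_0$. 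Each of (c), (d) is thereby cut down to one collinearity.

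\emph{The crux, and the expected obstacle}: it remains to match the incidence $F_3\in QE_0$ with (b), and symmetrically for (d). Here I would first reformulate (b): the quadrangle $AFQE$ has $AF\pa QE$ and $AE\pa FQ$ exactly when (b) holds, so (b) is equivalent to $AFQE$ being a parallelogram, i.e. to the affine relation $Q=E+F-A$. Combining this with $F_3=A+B-F$ (the reflection of $F$ in the midpoint $F_0$ of $AB$, valid since $F,F_3$ are isotomic) and $E_0=\tfrac12(A+C)$, the collinearity of $F_3,Q,E_0$ becomes a relation among the division ratios of the traces $E$ on $AC$ and $F$ on $AB$. I expect this final matching to be the main difficulty, since the direction condition (b) and the incidence (c) do not coincide by a short synthetic step. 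The cleanest route I see is to pass to barycentric coordinates $P=(x:y:z)$, so that $Q=(x(y+z):y(z+x):z(x+y))$ and $E,F,E_3,F_3$ are the evident traces, and to verify that each of (b), (c), (d) collapses to the single relation $x^2=xy+yz+zx$ --- equivalently $\tfrac{CE}{CA}\cd\tfrac{BF}{BA}=\tfrac12$. Once this is checked, all four statements are simultaneously equivalent; as a sanity check, this relation defines a conic through $B$ and $C$, consistent with the locus $\{P:H=A\}$ being a conic, as used later in Section 4.
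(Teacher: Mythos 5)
Your proposal is correct in substance but takes a genuinely different route from the paper's. The first step agrees: both you and the authors identify $H=A$ with $AFQE$ being a parallelogram (though note that condition (b) is meant as a congruence of segments, $QE\cong AF$ and $QF\cong AE$, not as parallelism as you suggest; the two readings are reconciled precisely through the parallelogram, which is how the paper treats it). Your reduction of (c) to the single incidence $F_3\in QE_0$ (and (d) to $E_3\in QF_0$), by observing that $Q=K(P')$, $E_0=K(B)$ and $K(E_3)$ automatically lie on the complement of the line through $B, P', E_3$, is a clean observation that the paper uses only implicitly. From there the two arguments diverge: the paper stays synthetic, proving (b)$\Rightarrow$(c),(d) by a congruent-triangle argument at $X=BE\cdot QF_3$ (using that $K(E_3)$ is the midpoint of $BE$), proving (c)$\Leftrightarrow$(d) by the signed-ratio identity $x=(y+1)/(y-1)\Leftrightarrow y=(x+1)/(x-1)$, and closing the cycle with (c),(d)$\Rightarrow$(b) via the parallelogram $F_3P'E_3Q$; you instead propose to show that each of (b), (c), (d) is equivalent to the single barycentric relation $x^2=xy+yz+zx$ for $P=(x:y:z)$. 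Your plan does go through: with $Q=(x(y+z):y(x+z):z(x+y))$, $E=(x:0:z)$, $F=(x:y:0)$, $F_3=(y:x:0)$, $E_0=(1:0:1)$, the parallelogram relation $Q=E+F-A$ and the determinant for the collinearity of $F_3,Q,E_0$ each reduce to $(x+y)(x+z)=2(xy+yz+zx)$, i.e.\ $x^2=xy+yz+zx$ (the determinant factors as $y(x^2-xy-yz-zx)$ and $y\neq 0$ since $P$ is off the sidelines), with (d) following by the symmetry $y\leftrightarrow z$. The one criticism is that you leave this decisive computation as something ``to be checked'' rather than carrying it out, so as written the proposal is a correct plan rather than a finished proof; but the computation is routine and its outcome matches the equation of $\overline{\C}_A$ stated at the end of Section 4. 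The paper's route buys a coordinate-free argument consistent with its synthetic program; yours buys the fact that all four conditions are seen at once to be equivalent to one explicit equation, which also immediately yields the locus description of Theorem 4.2.
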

\begin{proof}
We use the fact that $K(E_3)$ is the midpoint of segment $BE$ and $K(F_3)$ is the midpoint of segment $CF$ from I, Corollary 2.2.  Statement (a) holds iff $QE \pa AB$ and $QF \pa AC$, i.e. iff $AFQE$ is a parallelogram, which is equivalent to (b). Suppose (b) holds. Let $X = BE \cdot QF_3$. Then triangles $BXF_3$ and $EXQ$ are congruent since $QE \pa BF_3 = AB$ and $QE = AF = BF_3$. Therefore, $BX = EX$, i.e. $X$ is the midpoint $K(E_3)$ of $BE$, so $Q, F_3$, and $X = K(E_3)$ are collinear. Similarly, $Q, E_3$, and $K(F_3)$ are collinear. This shows (b) $\Rightarrow$ (c), (d).

Next, we show (c) and (d) are equivalent. Suppose (c) holds. Since $P', E_3, B$ are collinear, $Q, K(E_3), E_0$ are collinear and the line $F_3E_0$ is the complement of the line $BE_3$, hence the two lines are parallel and
\begin{equation}
\frac{AF_3}{F_3B} = \frac{AE_0}{E_0E_3}.
\label{eqn:Ratios1}
\end{equation}
Conversely, if the equality holds, then the lines are parallel and $F_3$ lies on the line through $K(E_3)$ parallel to $P'E_3$, i.e. the line $K(P'E_3) = QK(E_3)$, so (c) holds. Similarly, (d) holds if and only if
\begin{equation}
\frac{AE_3}{E_3C} = \frac{AF_0}{F_0F_3}.
\label{eqn:Ratios2}
\end{equation}
A little algebra shows that (\ref{eqn:Ratios1}) holds if and only if (\ref{eqn:Ratios2}) holds.  Using signed distances, and setting $AE_0/E_0E_3 = x$, we have $AE_3/E_3C = (x + 1)/(x - 1)$.  Similarly, if $AF_0/F_0F_3 = y$, then $AF_3/F_3B = (y + 1)/(y - 1)$.  Now  (\ref{eqn:Ratios1}) is equivalent to $x = (y+1)/(y-1)$, which is equivalent to $y = (x+1)/(x-1)$, hence also to (\ref{eqn:Ratios2}).  Thus, (c) is equivalent to (d).  Note that this part of the lemma does not use that $H$ is defined.  \smallskip

Now if (c) or (d) holds, then they both hold.  We will show (b) holds in this case.  By the reasoning in the previous paragraph, we have $F_3Q \pa E_3P'$ and $E_3Q \pa F_3P'$, so $F_3P'E_3Q$ is a parallelogram. Therefore, $F_3Q = P'E_3 = 2\cdot QK(E_3)$, so $F_3K(E_3) = K(E_3)Q$. This implies the triangles $F_3K(E_3)B$ and $QK(E_3)E$ are congruent (SAS), so $AF = BF_3 = QE$. Similarly, $AE = CE_3 = QF$, so (b) holds.
\end{proof}

\begin{thm}
\label{thm:locus}
The locus $\sL_A$ of points $P$ such that $H = A$ is a subset of the conic $\overline{\C}_A$ through $B, C, E_0$, and $F_0$, whose tangent at $B$ is $K^{-1}(AC)$ and whose tangent at $C$ is $K^{-1}(AB)$. Namely, $\mathscr{L}_A = \overline{\C}_A \sm \{B, C, E_0, F_0\}$.
\end{thm}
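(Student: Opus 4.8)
The plan is to reduce the condition $H=A$ to a single polynomial condition on $P$ and then recognize that condition as the equation of the conic $\overline{\C}_A$. By the definition of the generalized orthocenter, $H=A$ holds (whenever $H$ is defined) exactly when $AB \pa QE$ and $AC \pa QF$, since the remaining defining relation $HA \pa QD$ becomes vacuous once $H=A$. This is precisely the observation opening the proof of the preceding lemma, so I take it as my starting point and do not need the full equivalence (a)--(d). One may also note, as motivation matching the section title, that $O=K(H)$ and $K(A)=D_0$, so $H=A$ is equivalent to $O=D_0$; but I will work directly with the two parallelisms.

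First I would introduce barycentric coordinates relative to $ABC$, writing $P=(u:v:w)$, so that $E=(u:0:w)$, $F=(u:v:0)$, and the isotomcomplement is $Q=K(\iota(P))=(u(v+w):v(w+u):w(u+v))$. The relation $AB \pa QE$ says that $E$, $Q$, and the infinite point $(1:-1:0)$ of $AB$ are collinear, and expanding the resulting $3\times 3$ determinant factors it as $w\,(u^2-uv-uw-vw)$. Likewise $AC \pa QF$ says $F$, $Q$, and $(1:0:-1)$ are collinear, and its determinant factors as $-v\,(u^2-uv-uw-vw)$. Since $P$ lies off the sides of $ABC$ we have $u,v,w \neq 0$, so both conditions are equivalent to the single quadratic equation
\[
\overline{\C}_A:\quad u^2-uv-uw-vw=0.
\]
The key (and pleasant) point is that the two a priori independent parallelism conditions collapse onto the \emph{same} conic; this is what makes $\sL_A$ a piece of one conic rather than a finite point set, and I expect this factorization to be the analytic heart of the argument.

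Next I would confirm that $\overline{\C}_A$ is the conic named in the statement. Substituting shows it passes through $B=(0:1:0)$, $C=(0:0:1)$, $E_0=(1:0:1)$, and $F_0=(1:1:0)$. Computing the polar (tangent) of the quadratic form at $B$ gives the line $u+w=0$, and at $C$ the line $u+v=0$; since the anticomplement $K^{-1}$ sends the line $AC$ to $u+w=0$ and the line $AB$ to $u+v=0$, these are exactly $K^{-1}(AC)$ at $B$ and $K^{-1}(AB)$ at $C$, as claimed. Four points together with the two prescribed tangents more than determine a conic, so this data pins down $\overline{\C}_A$ uniquely.

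Finally I would identify exactly which points must be deleted and verify $H$ is defined on the rest. Solving $\overline{\C}_A=0$ simultaneously with $u=0$, $v=0$, or $w=0$ shows that the only points of $\overline{\C}_A$ lying on a side of $ABC$ are precisely $B,C,E_0,F_0$; every other point has $u,v,w \neq 0$ and so lies off all three sides. To ensure $H$ is actually defined there, I would also check that $\overline{\C}_A$ meets the anticomplementary triangle only at excluded points: its side through $A$ is $v+w=0$, and substituting gives $u^2+v^2=0$ with no real solution, while its sides through $B$ and $C$ are $u+w=0$ and $u+v=0$, the tangents found above, meeting $\overline{\C}_A$ only at $B$ and $C$. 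Hence for every $P \in \overline{\C}_A \sm \{B,C,E_0,F_0\}$ the point $H$ is defined, and $u^2-uv-uw-vw=0$ with $v,w \neq 0$ forces both $AB \pa QE$ and $AC \pa QF$, i.e. $H=A$; conversely, any $P$ with $H=A$ has $u,v,w\neq 0$ and satisfies $w\,(u^2-uv-uw-vw)=0$, so it lies on $\overline{\C}_A$ and, being off the sides, avoids the four points. This yields $\sL_A=\overline{\C}_A \sm \{B,C,E_0,F_0\}$. I expect the only real obstacle to be the bookkeeping in this last step—correctly excising the degenerate points and confirming definedness of $H$—rather than the computation, which the determinant factorization settles cleanly.
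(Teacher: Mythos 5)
Your proof is correct, but it takes a genuinely different route from the paper's. The paper stays synthetic: using the equivalences of the preceding lemma it exhibits $P=BE\cdot CF$ as the intersection of corresponding lines under a chain of projectivities (through $E$, $E_3$, $K(E_3)$, $F_3$, $F$), so that Steiner's theorem makes the locus a line or a conic; it then identifies the four degenerate positions of $E$ giving $B,C,E_0,F_0$, reads off the tangencies at $B$ and $C$ from the fact that $K^{-1}(AC)$ and $K^{-1}(AB)$ each meet the locus in only one point, and rules out the sides of the anticomplementary triangle by a rather delicate betweenness argument. Your barycentric computation compresses all of this: the factorizations $\pm w\,(u^2-uv-uw-vw)$ and $\pm v\,(u^2-uv-uw-vw)$ show at once that the two parallelism conditions cut out one and the same conic (this is exactly the content of the lemma's equivalence of (c) and (d), which the paper proves with signed ratios), and the anticomplementary side $v+w=0$ is excluded because substitution yields a sum of two squares. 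Your equation $u^2=uv+uw+vw$ matches the equation $xy+xz+yz=x^2$ that the paper records without proof in the remark after the theorem, so the computation is independently confirmed. Two minor points worth making explicit: the quadratic form has nonzero determinant, so $\overline{\C}_A$ is nondegenerate and the polars at $B$ and $C$ genuinely are tangent lines meeting the conic only once; and $\overline{\C}_A$ meets the Steiner circumellipse $uv+uw+vw=0$ (where $H$ would be infinite) only where $u^2=0$, i.e.\ at the excluded points $B$ and $C$, so $H$ is ordinary and equal to $A$ at every remaining point. Neither is a gap --- just the bookkeeping you anticipated.
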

\begin{proof}
Given $E$ on $AC$ we define $F_3$ as $F_3 = E_0K(E_3) \cdot AB$, and $F$ to be the reflection of $F_3$ in $F_0$.  Then we have the following chain of projectivities:
\[BE \ \barwedge \ E \ \barwedge \ E_3 \ \stackrel{G}{\doublebarwedge} \ K(E_3) \ \stackrel{E_0}{\doublebarwedge} \ F_3 \ \barwedge \ F \ \barwedge \ CF.\]
Then $P = BE \cdot CF$ varies on a line or a conic. We want to show: (a) for a point $P$ thus defined, $H = A$; and (b) if $H = A$ for some $P$, then $P$ arises in this way, i.e. $F_3$ is on $E_0K(E_3)$. Both of these facts follow from the above lemma. \smallskip

Now we list four cases in which $H$ is undefined, namely when $P = B, C, E_0, F_0$.  Let $A_\infty, B_\infty, C_\infty$ represent the points at infinity on the respective lines $BC, AC, AB$.  \smallskip

\begin{enumerate}[1.]
\item For $E = B_\infty = E_3 = K(E_3)$, we have $E_0K(E_3) = AC$ so $F_3 = A, F = B$, and $P = BE \cdot CF = B$.
\item For $E = C$, we have $E_3 = A, K(E_3) = D_0, E_0K(E_3) = D_0E_0 \pa AB, F = F_3 = C_\infty$, so $P = BE\cdot CF = C$.
\item For $E = E_0$, we have $E_3 = E_0$ and $K(E_0)$ is the midpoint of $BE_0$ by I, Corollary 2.2, so $F_3 = B, F = A$, and $P = BE\cdot CF = E_0$.
\item For $E = A$, we have $E_3 = C, K(E_3) = F_0, F_3 = F = F_0$, and $P = BE\cdot CF = F_0$.
\end{enumerate}

\begin{figure}
\[\includegraphics[width=5.5in]{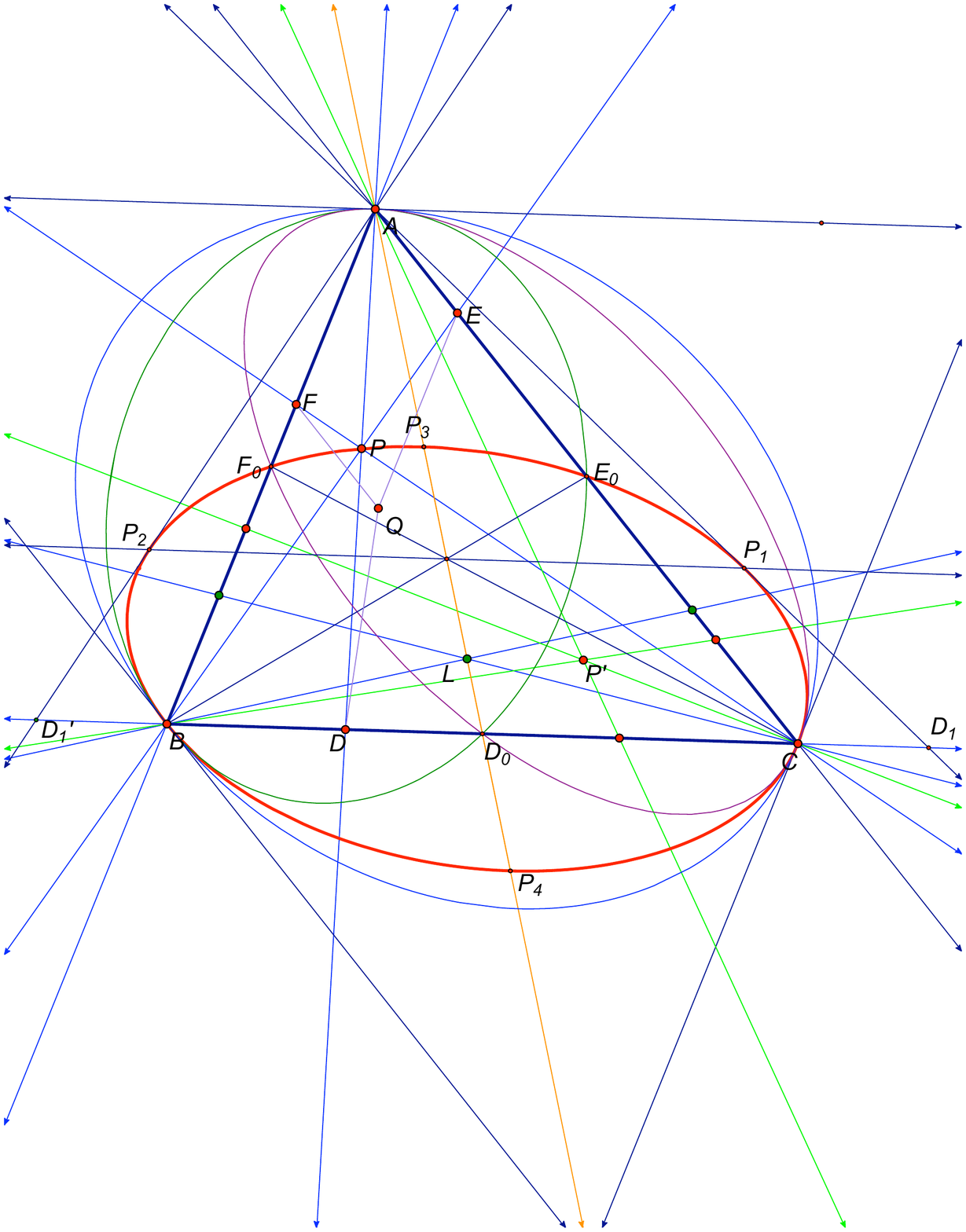}\]
\caption{The conics $\overline{\C}_A$ (red), $\overline{\C}_B$ (purple), $\overline{\C}_C$ (green), and $\iota(l_\infty)$ (blue).}
\label{fig:locus}
\end{figure}

Since the four points $B, C, E_0, F_0$ are not collinear, this shows that the locus of points $P=BE \cdot CF$ is a conic $\overline{\C}_A$ through $B, C, E_0, F_0$.  Moreover, the locus $\mathscr{L}_A$ of points $P$ such that $H = A$ is a subset of $\overline{\C}_A  \sm \{B, C, E_0, F_0\}$. \smallskip

We claim that if $E$ is any point on line $AC$ other than $A, C, E_0$, or $B_\infty$, then $P$ is a point for which $H$ is well-defined. First, $E_3$ is an ordinary point because $E \ne B_\infty$. Second, because $E \ne B_\infty$, the line $E_0K(E_3)$ is not a sideline of $ABC$. The line $E_0K(E_3)$ intersects $AB$ in $A$ if and only if $K(E_3)$ lies on $AC$, which is true only if $E_3 = B_\infty$. The line $E_0K(E_3)$ intersects $AB$ in $B$ iff $K(E_3)$ is on $BE_0$, which holds iff $E_3$ is on $K^{-1}(B)B = BE_0$, and this is the case exactly when $E = E_3 = E_0$.  Furthermore, the line $E_0K(E_3)$ is parallel to $AB$ iff $K(E_3)=D_0$ and $E_3=A$, or $K(E_3) = E_3 = E = C_\infty$, which is not on $AC$. Thus, the line $E_0K(E_3)$ intersects $AB$ in an ordinary point which is not a vertex, so $F_3$ and $F$ are not vertices and $P=BE\cdot CF$ is a point not on the sides of $ABC$. \smallskip

It remains to show that $P$ does not lie on the sides of the anticomplementary triangle of $ABC$. If $P$ is on $K^{-1}(AB)$ then $F=F_3 = C_\infty$, which only happens in the excluded case $E=C$ (see Case 2 above). If $P$ is on $K^{-1}(AC)$ then $E= B_\infty$, which is also excluded. If $P$ is on $K^{-1}(BC)$ then $P'$ is also on $K^{-1}(BC)$ so $Q=K(P')$ is on $BC$.  \smallskip

Suppose $Q$ is on the same side of $D_0$ as $C$. Then $P'$ is on the opposite side of line $AD_0$ from $C$, so it is clear that $CP'$ intersects $AB$ in the point $F_3$ between $A$ and $B$.  If $Q$ is between $D_0$ and $C$, then $F_3$ is between $A$ and $F_0$ (since $F_0, C$ and $K^{-1}(C)$ are collinear), and it is clear that $F_3E_0$ can only intersect $BC$ in a point outside of the segment $D_0C$, on the opposite side of $C$ from $Q$.  But this is a contradiction, since by construction $F_3, E_0$, and $K(E_3)$ are collinear, and $Q=K(P')$ lies on $K(BE_3)=E_0K(E_3)$.  On the other hand, if the betweenness relation $D_0 * C * Q$ holds, then $F_3$ is between $B$ and $F_0$, and it is clear that $F_3E_0$ can only intersect $BC$ on the opposite side of $B$ from $C$.  This contradiction also holds when $P'=Q$ is a point on the line at infinity, since then $F_3=B$, and $B, E_0$ and $Q=A_\infty$ (the point at infinity on $BC$) are not collinear.   A symmetric argument applies if $Q$ is on the same side of $D_0$ as $B$, using the fact that parts (c) and (d) of the lemma are equivalent. Thus, no point $P$ in $\overline{\C}_A  \sm \{B, C, E_0, F_0\}$ lies on a side of $ABC$ or its anticomplementary triangle, and the point $H$ is well-defined; further, $H=A$ for all of these points. \smallskip

Finally, by the above argument, there is only one point $P$ on $\overline{\C}_A$ that is on the line $K^{-1}(AB)$, namely $C$, and there is only one point $P$ on $\overline{\C}_A$ that is on the line $K^{-1}(AC)$, namely $B$, so these two lines are tangents to $\overline{\C}_A$.
\end{proof}

This theorem shows that the locus of points $P$, for which the generalized orthocenter $H$ is a vertex of $ABC$, is the union of the conics $\overline{\C}_A \cup \overline{\C}_B \cup \overline{\C}_C$ minus the vertices and midpoints of the sides. \medskip

In the next proposition and its corollary, we consider the special case in which $H=A$ and $D_3$ is the midpoint of $AP'$.  We will show that, in this case, the map $\textsf{M}$ is a translation.  (See Figure \ref{fig:4.2}.)  We first show that this situation occurs.  \medskip

\begin{lem}
\label{lem:equilateral}
If the equilateral triangle $ABC$ has sides of length $2$, then there is a point $P$ with $AP \cdot BC=D$ and $d(D_0,D)=\sqrt{2}$, such that $D_3$ is the midpoint of the segment $AP'$ and $H=A$.
\end{lem}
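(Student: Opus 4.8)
The plan is to prove this existence statement by an explicit coordinate construction: I would reverse-engineer the point $P$ from the two metric requirements and then verify $H = A$ using the characterization at the start of this section. First I would place the equilateral triangle with $B = (-1,0)$, $C = (1,0)$, and $A = (0,\sqrt{3})$, so that the side length is $2$ and $D_0 = (0,0)$. Because $d(D_0, D) = \sqrt{2}$ is required and $D$ lies on line $BC$, I set $D = (\sqrt{2}, 0)$; the isotomic foot $D_3 = AP' \cd BC$ is then the reflection of $D$ in $D_0$, so $D_3 = (-\sqrt{2}, 0)$.

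The requirement that $D_3$ be the midpoint of $AP'$ now determines $P'$ outright, via $P' = 2D_3 - A = (-2\sqrt{2}, -\sqrt{3})$. Passing to barycentric coordinates gives $P' = (-1 : 1 + \sqrt{2} : 1 - \sqrt{2})$, and applying isotomic conjugation (an involution) yields $P = \iota(P') = (1 : 1 - \sqrt{2} : 1 + \sqrt{2})$. I would then assemble the data needed to test $H = A$: the complement $Q = K(P') = (2 : -\sqrt{2} : \sqrt{2})$, which is $(\sqrt{2}, \sqrt{3})$ in Cartesian coordinates, together with the traces $E = BP \cd AC$ and $F = CP \cd AB$ read off from the barycentric form of $P$. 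A direct check that $AP \cd BC = D = (\sqrt{2}, 0)$ confirms the distance condition and the internal consistency of the construction.

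By the Lemma at the start of this section, $H = A$ is equivalent to $QE \pa AB$ and $QF \pa AC$ (i.e.\ to $AFQE$ being a parallelogram). The heart of the proof is the short vector computation verifying these two parallelisms; I would present it by comparing the direction of $QE$ with $AB = (-1, -\sqrt{3})$ and the direction of $QF$ with $AC = (1, -\sqrt{3})$. The point I would emphasize is that repeating the computation with $D = (d, 0)$ for a free parameter $d$ collapses the condition $QE \pa AB$ to exactly $d^2 = 2$; thus the prescribed value $d = \sqrt{2}$ is precisely what forces $H = A$, and no other choice of $d$ would work.

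The main subtlety is not any individual calculation but the fact that the three requirements --- $H = A$, the midpoint condition on $D_3$, and $d(D_0, D) = \sqrt{2}$ --- are a priori over-determined, imposing three constraints on the two degrees of freedom of $P$; the real content of the lemma is their simultaneous consistency, which the free-parameter computation resolves. A secondary point is admissibility: I must confirm that $P = (1 : 1 - \sqrt{2} : 1 + \sqrt{2})$ is an ordinary point lying on none of the sides of $ABC$ or of its anticomplementary triangle, so that $H$ is genuinely defined. Since all three barycentric coordinates of $P$ are nonzero and $P \notin \{B, C, E_0, F_0\}$, this follows at once from Theorem \ref{thm:locus}, which already certifies admissibility for every point of $\overline{\C}_A \sm \{B, C, E_0, F_0\}$.
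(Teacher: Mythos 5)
Your proposal is correct, and the computations check out (I verified that with $D=(d,0)$ one gets $P'=(-1:1+d:1-d)$, $Q=(d,\sqrt3)$, $E-Q=\bigl(\tfrac{1-d^2}{d},\tfrac{-\sqrt3}{d}\bigr)$, so $QE\parallel AB$ forces $d^2=2$ exactly as you claim). But your route is genuinely different from the paper's. The paper also constructs $P'$ first (as the point on $K^{-2}(BC)$ with $P'\tilde A=2\sqrt2$, which is your $(-2\sqrt2,-\sqrt3)$), but then argues metrically and synthetically: two similar-triangle ratios give $BF_3=\sqrt2$, whence $AQ\cong AF$ and the $60^\circ$ angle of the equilateral triangle make $AQF$ equilateral, giving $QF\parallel AC$ (and symmetrically $QE\parallel AB$), hence $H=A$. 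Your coordinate/barycentric computation reaches the same parallelism criterion more mechanically, and it buys something the paper's argument does not state here: the condition $d^2=2$ is \emph{necessary} as well as sufficient, which is exactly the fact the authors need later in Remark 2 at the end of Section 4 (the extremal ratio $DD_0/D_0C=\sqrt2$). The paper's argument is shorter on the page and stays in the synthetic spirit of the series.

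One small repair: your admissibility step is mis-cited. Theorem \ref{thm:locus} certifies that $H$ is defined for points of $\overline{\C}_A\setminus\{B,C,E_0,F_0\}$, but you have not shown $P\in\overline{\C}_A$ (that would require either the conic's equation or condition (c) of the lemma preceding Theorem \ref{thm:locus}), so invoking it here is circular-adjacent. Fortunately you do not need it: from $P=(1:1-\sqrt2:1+\sqrt2)$ none of $x$, $y$, $z$, $y+z$, $x+z$, $x+y$ vanishes, so $P$ lies on no side of $ABC$ or of its anticomplementary triangle and is ordinary; Theorem \ref{thm:HO} then guarantees $H$ exists, and your two parallelisms identify it as $A$.
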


\begin{figure}
\[\includegraphics[width=5in]{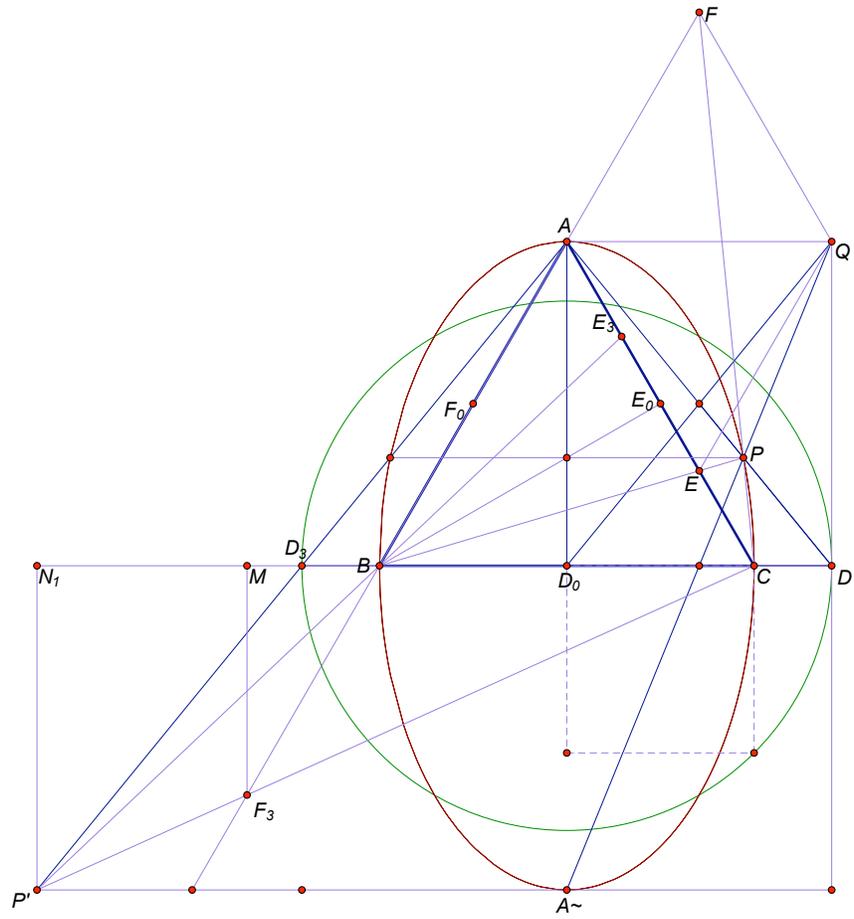}\]
\caption{Proof of Lemma 4.3.}
\label{fig:4.1}
\end{figure}

\begin{proof}
(See Figure \ref{fig:4.1}.) We will construct $P'$ such that $D_3$ is the midpoint of $AP'$ and $H=A$, and then show that $P$ satisfies the hypothesis of the lemma.  The midpoint $D_0$ of $BC$ satisfies $D_0B = D_0C = 1$ and $AD_0 = \sqrt3$. Let the triangle be positioned as in Figure 3.  Let $\tilde A$ be the reflection of $A$ in $D_0$, and let $D$ be a point on $BC$ to the right of $C$ such that $D_0D = \sqrt{2}$. In order to insure that the reflection $D_3$ of $D$ in $D_0$ is the midpoint of $AP'$, take $P'$ on $l=K^{-2}(BC)$ with $P'\tilde A = 2\sqrt2$ and $P'$ to the left of $\tilde A$. Then $Q = K(P')$ is on $K^{-1}(BC)$, to the right of $A$, and $AQ = \sqrt{2}$. Let $E_3$ and $F_3$ be the traces of $P'$ on $AC$ and $BC$, respectively.  \smallskip

We claim $BF_3 = \sqrt{2}$. Let $M$ be the intersection of $BC$ and the line through $F_3$ parallel to $AD_0$. Then triangles $BMF_3$ and $BD_0A$ are similar, so $F_3M = \sqrt{3} \cdot MB$. Let $N_1$ be the intersection of $BC$ and the line through $P'$ parallel to $AD_0$. Triangles $P'N_1C$ and $F_3MC$ are similar, so
\[\frac{F_3M}{MC} = \frac{P'N_1}{N_1C} = \frac{AD_0}{P'\tilde A + 1} = \frac{\sqrt{3}}{2\sqrt{2} + 1}.\]
Therefore,
\[\frac{\sqrt{3}}{2\sqrt{2} + 1} = \frac{F_3M}{MC} = \frac{\sqrt{3} \cdot MB}{MB + 2}\]
which yields that $MB = 1/\sqrt{2}$. Then $BF_3=\sqrt{2}$ is clear from similar triangles. \smallskip

Now, let $F$ be the reflection of $F_3$ in $F_0$ (the midpoint of $AB$). Then $AQF$ is an equilateral triangle because $m(\angle FAQ)=60^\circ$ and $AQ \cong BF_3 \cong AF$, so $\angle AQF \cong \angle AFQ$. Therefore, $QF \pa AC$. It follows that the line through $F_0$ parallel to $QF$ is parallel to $AC$, hence is a midline of triangle $ABC$ and goes through $D_0$. (A similar proof shows that $QE \pa AB$ so the line through $E_0$ parallel to $QE$ goes through $D_0$.) This implies $O = D_0$.  Clearly, $P=AD \cdot CF$ is a point outside the triangle $ABC$, not lying on an extended side of $ABC$ or its anticomplementary triangle, which satisfies the conditions of the lemma.
\end{proof}

The next proposition deals with the general case, and shows that the point $P$ we constructed in the lemma  lies on a line through the centroid $G$ parallel to $BC$.

\begin{prop}
\label{prop:HA}
Assume that $H=A, O=D_0$, and $D_3$ is the midpoint of $AP'$.  Then the conic $\tilde{\C}_O = \iota(l)$, where $l=K^{-1}(AQ)=K^{-2}(BC)$ is the line through the reflection $\tilde{A}$ of $A$ in $O$ parallel to the side $BC$.  The points $O, O', P, P'$ are collinear, with $d(O,P')=3d(O,P)$, and the map $\textsf{M}$ taking $\tilde{\C}_O$ to the inconic $\mathcal{I}$ is a translation.  In this situation, the point $P$ is one of the two points in the intersection $l_G \cap \tilde{\C}_O$, where $l_G$ is the line through the centroid $G$ which is parallel to $BC$.
\end{prop}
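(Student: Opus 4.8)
The plan is to organize the proof around two facts: that $AQ$ is parallel to $BC$, and that the hypothesis $H=A$ forces a single coordinate relation expressing $O$ as an explicit affine combination of $P$ and $P'$. I would first prove $AQ\pa BC$ and identify $l$. Since $D_3$ is the midpoint of $AP'$ and $K(A)=D_0$, $K(P')=Q$, the complement map sends $D_3$ to the midpoint of $D_0Q$; but $D_3\in BC$, so $K(D_3)$ lies on the midline $E_0F_0$. As $D_0$ is on $BC$ and $E_0F_0$ lies halfway between $A$ and $BC$, the midpoint of $D_0Q$ being on $E_0F_0$ forces $Q$ onto the line through $A$ parallel to $BC$; hence $AQ\pa BC$. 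Then $K(AQ)$ is the line through $K(A)=D_0$ parallel to $BC$, namely $BC$ itself, so $AQ=K^{-1}(BC)$ and $l=K^{-1}(AQ)=K^{-2}(BC)$ is the line through $\tilde A=K^{-1}(A)$ (the reflection of $A$ in $O=D_0$) parallel to $BC$. To see $\tilde{\C}_O=\iota(l)$, note that $\iota(l)$ is a circumconic of $ABC$ (as $l$ avoids the vertices), that $\tilde A$ is fixed by $\iota$ (being a vertex of the anticomplementary triangle) and lies on $l$, so $\tilde A\in\iota(l)$, and that $D_0$ bisects both the chord $BC$ and the chord $A\tilde A$. Two distinct chords bisected at $D_0$ make $D_0$ the center, so $\iota(l)$ is the unique circumconic with center $O=D_0$, i.e. $\iota(l)=\tilde{\C}_O$.

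For the collinearity of $O, O', P, P'$ and the ratio $d(O,P')=3\,d(O,P)$ I would pass to barycentric coordinates. Writing $D_3=(0:m:n)$ gives $P'=(-(m+n):2m:2n)$ as the reflection of $A$ in $D_3$, together with $Q=K(P')$ and $P=\iota(P')=(2mn:-n(m+n):-m(m+n))$. The criterion $H=A$, in the form $QE\pa AB$ and $QF\pa AC$, is equivalent to the single relation $m^2+6mn+n^2=0$. A direct computation shows the collinearity determinant of $O=D_0, P, P'$ is a multiple of $(n-m)(m^2+6mn+n^2)$, which now vanishes, so $O, P, P'$ are collinear; after normalizing, $O=\tfrac34 P+\tfrac14 P'$, which gives $d(O,P')=3\,d(O,P)$ with $P$ and $P'$ on opposite sides of $O$. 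Moreover $O'=\eta(O)$ lies on line $PP'$ as well, since $OO'\pa PP'$ by Theorem \ref{thm:K(Q)}(c) and $O$ is already on $PP'$.

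The translation statement then drops out cleanly. By Theorem \ref{thm: FixM} the center of $\textsf{M}$ is $S=OQ\cd GV$, so it suffices to prove $OQ\pa GV$. The midpoint $M$ of $PP'$ lies on the axis $GV$ of $\eta$, and $G\in GV$, so $GV$ has direction $\vec{GM}=\tfrac12(\vec{GP}+\vec{GP'})$; combining $\vec{GQ}=-\tfrac12\vec{GP'}$ (from $Q=K(P')$) with $O=\tfrac34 P+\tfrac14 P'$ yields $\vec{OQ}=-\tfrac32\vec{GM}$, so $OQ\pa GV$, the point $S$ is at infinity, and $\textsf{M}$ is a translation.

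For the final assertion, $P\in\tilde{\C}_O$ because the line $AQ\pa BC$ gives $Q$ the $A$-coordinate $1$, while $P'=2D_3-A$ with $D_3\in BC$ gives $P'$ the $A$-coordinate $-1$; hence $K(Q)$, the midpoint of $P'Q$, has $A$-coordinate $0$ and lies on $BC$, so $P'\in K^{-2}(BC)=l$ and $P=\iota(P')\in\iota(l)=\tilde{\C}_O$. And $P\in l_G$ because the relation $m^2+6mn+n^2=0$ is exactly the statement that $P$ has normalized $A$-coordinate $\tfrac13$, the defining condition of $l_G$; its two roots $m:n=-3\pm2\sqrt2$ produce the two points of $l_G\cap\tilde{\C}_O$. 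The main obstacle is the middle step: translating the hypothesis $H=A$ into the relation $m^2+6mn+n^2=0$ and checking that this same relation is what forces the collinearity with the exact $3{:}1$ ratio as well as membership in $l_G$ — everything else is then either the short vector identity for $\textsf{M}$ or the $A$-coordinate bookkeeping above.
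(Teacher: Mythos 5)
Your proof is correct, but it takes a genuinely different route from the paper's. For the identification $\tilde{\C}_O=\iota(l)$ the paper shows the two circumconics share the fourth point $\tilde A$ \emph{and} the tangent line $l$ there (the tangency to $\tilde{\C}_O$ coming from Corollary \ref{cor:tangent}, i.e.\ from the map $\textsf{M}$); your observation that $D_0$ bisects the two distinct chords $BC$ and $A\tilde A$ of $\iota(l)$, hence is its center, is shorter and avoids the tangency machinery entirely. For the heart of the proposition the paper stays synthetic: it deduces $P\in\tilde{\C}_O$, identifies $P$ with the point $\tilde Z=\textsf{R}_OK^{-1}(Z)$ of Theorem \ref{thm:tildeZ} to get $P\in OP'$, and then extracts the $3{:}1$ ratio and the parallelogram $OQQ'O'$ from congruences along $PP'$ via $\eta$ and the homothety $\textsf{H}$; you instead parametrize by $D_3=(0:m:n)$, reduce the hypothesis $H=A$ to $m^2+6mn+n^2=0$ (I checked: the collinearity determinant of $D_0,P,P'$ is indeed $(m-n)(m^2+6mn+n^2)$, the normalization gives $O=\tfrac34P+\tfrac14P'$, and the normalized $A$-coordinate of $P$ is $\tfrac13$), and get collinearity, the ratio, $P\in l_G$, and $\vec{OQ}=-\tfrac32\vec{GM}$ all from that one relation. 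Your version is more self-contained and easier to verify, and it makes visible \emph{why} exactly two such $P$ exist ($m:n=-3\pm2\sqrt2$); the paper's version stays within its synthetic program and, more importantly, produces the intermediate objects ($P=\tilde Z$, $Z$ the midpoint of $AP$, the point $J=K^{-1}(Z)$, the parallelogram $OQQ'O'$, $PG\pa AQ$) that are reused immediately in Corollary \ref{cor:HArel}. Two small points to tidy: in the translation step you should note that $OQ\ne GV$ (if they coincided then $O'Q'=\eta(OQ)=OQ$, contradicting the distinctness of $OQ$ and $O'Q'$ established in the proof of Theorem \ref{thm: FixM}), so that parallelism really does force $S$ to be infinite; and in the last paragraph the detour through $K(Q)$ is unnecessary, since $P'$ having normalized $A$-coordinate $-1$ already places it on $l=K^{-2}(BC)$.
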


\begin{proof}
(See Figure \ref{fig:4.2}.)  Since the midpoint $R_1'$ of segment $AP'$ is $D_3$, lying on $BC$, $P'$ lies on the line $l$ which is the reflection of $K^{-1}(BC)$ (lying on $A$) in the line $BC$.  It is easy to see that this line is $l=K^{-2}(BC)$, and hence $Q=K(P')$ lies on $K^{-1}(BC)$.  From I, Corollary 2.6 we know that the points $D_0, R_1'=D_3$, and $K(Q)$ are collinear.  Since $K(Q)$ is the center of the conic $\Npp$, lying on $D_0$ and $D_3$, $K(Q)$ is the midpoint of segment $D_0D_3$ on $BC$.  Applying the map $T_{P'}^{-1}$ gives that $O=T_{P'}^{-1}(K(Q))$ is the midpoint of $T_{P'}^{-1}(D_3D_0)=AT_{P'}^{-1}(D_0)$.  It follows that $T_{P'}^{-1}(D_0)=\tilde{A}$ is the reflection of $A$ in $O$, so that $\tilde{A} \in \tilde{\C}_O$.  Moreover, $K(A)=O$, so $\tilde{A}=K^{-1}(A)$ lies on $l=K^{-1}(AQ) \pa BC$.   \smallskip

Next we show that $\tilde{\C}_O = \iota(l)$, where the image $\iota(l)$ of $l$ under the isotomic map is a circumconic of $ABC$ (see Lemma 3.4 in Part IV).  It is easy to see that $\iota(\tilde{A})= \tilde{A}$, since $\tilde{A} \in AG$ and $AB\tilde{A}C$ is a parallelogram.  Therefore, both conics $\tilde{\C}_O$ and $\iota(l)$ lie on the $4$ points $A,B,C, \tilde{A}$.  To show they are the same conic, we show they are both tangent to the line $l$ at the point $\tilde{A}$.  From Corollary \ref{cor:tangent} the tangent to $\tilde{\C}_O$ at $\tilde{A}=T_{P'}^{-1}(D_0)$ is parallel to $BC$, and must therefore be the line $l$.  To show that $l$ is tangent to $\iota(l)$, let $L$ be a point on $l \cap \iota(l)$.  Then $\iota(L) \in l \cap \iota(l)$.  If $\iota(L) \neq L$, this would give three distinct points, $L, \iota(L)$, and $\tilde{A}$, lying on the intersection $l \cap \iota(l)$, which is impossible.   Hence, $\iota(L)=L$, giving that $L$ lies on $AG$ and therefore $L=\tilde{A}$.  Hence, $\tilde{A}$ is the only point on $l \cap \iota(l)$, and $l$ is the tangent line.  This shows that $\tilde{\C}_O$ and $\iota(l)$ share $4$ points and the tangent line at $\tilde{A}$, proving that they are indeed the same conic.  \smallskip

From this we conclude that $P=\iota(P')$ lies on $\tilde{\C}_O$.  Hence, $P$ is the fourth point of intersection of the conics $\tilde{\C}_O$ and $\Cp=ABCPQ$.  From Theorem \ref{thm:tildeZ} we deduce that $P= \tilde{Z}=R_OK^{-1}(Z)$, which we showed in the proof of that theorem to be a point on the line $OP'$.  Hence, $P, O, P'$ are collinear, and applying the affine reflection $\eta$ gives that $O'$ lies on the line $PP'$, as well.  Now, $Z$ is the midpoint of $HP=AP$, since $\textsf{H}=K \circ \textsf{R}_O$ is a homothety with center $H=A$ and similarity factor $1/2$.  Since $Z$ lies on $GV$, where $V=PQ \cdot P'Q'$, it is clear that $P$ and $Q$ are on the opposite side of the line $GV$ from $P', Q'$, and $A$.  The relation $K(\tilde{A})=A$ means that $\tilde{A}$ and also $O$ are on the opposite side of $GV$ from $A$ and $O'$.  Also, $J=K^{-1}(Z)=\textsf{R}_O(\tilde{Z})=\textsf{R}_O(P)$ lies on the line $GV$ and on the conic $\tilde{\C}_O$.  This implies that $O$ lies between $J$ and $P$, and applying $\eta$ shows that $O'$ lies between $J$ and $P'$.  Hence, $OO'$ is a subsegment of $PP'$, whose midpoint is exactly $J=K^{-1}(Z)$, since this is the point on $GV$ collinear with $O$ and $O'$.  Now the map $\eta$ preserves distances along lines parallel to $PP'$ (see Part II), so $JO' \cong JO \cong OP \cong O'P'$, implying that $OO'$ is half the length of $PP'$.  Furthermore, segment $QQ'=K(PP')$ is parallel to $PP'$ and half as long.  Hence, $OO' \cong QQ'$, which implies that $OQQ'O'$ is a parallelogram.  Consequently, $OQ \pa O'Q'$, and Theorem \ref{thm: FixM} shows that $\textsf{M}$ is a translation.  Thus, the circumconic $\tilde{\C}_O$ and the inconic $\mathcal{I}$ are congruent in this situation. \smallskip

This argument implies the distance relation $d(O,P') =3d(O,P)$.  \smallskip

The relation $O'Q' \pa OQ$ implies, finally, that $T_P(O'Q') \pa T_P(OQ)$, or $K(Q')P \pa A_0Q = AQ$, since $O'=T_P^{-1}K(Q')$ from Theorem \ref{thm:HO} and $A_0$ is collinear with $A$ and the fixed point $Q$ of $T_P$ by I, Theorem 2.4.  Hence $PG=PQ'=PK(Q')$ is parallel to $AQ$ and $BC$. \smallskip
\end{proof}

\begin{figure}
\[\includegraphics[width=5.5in]{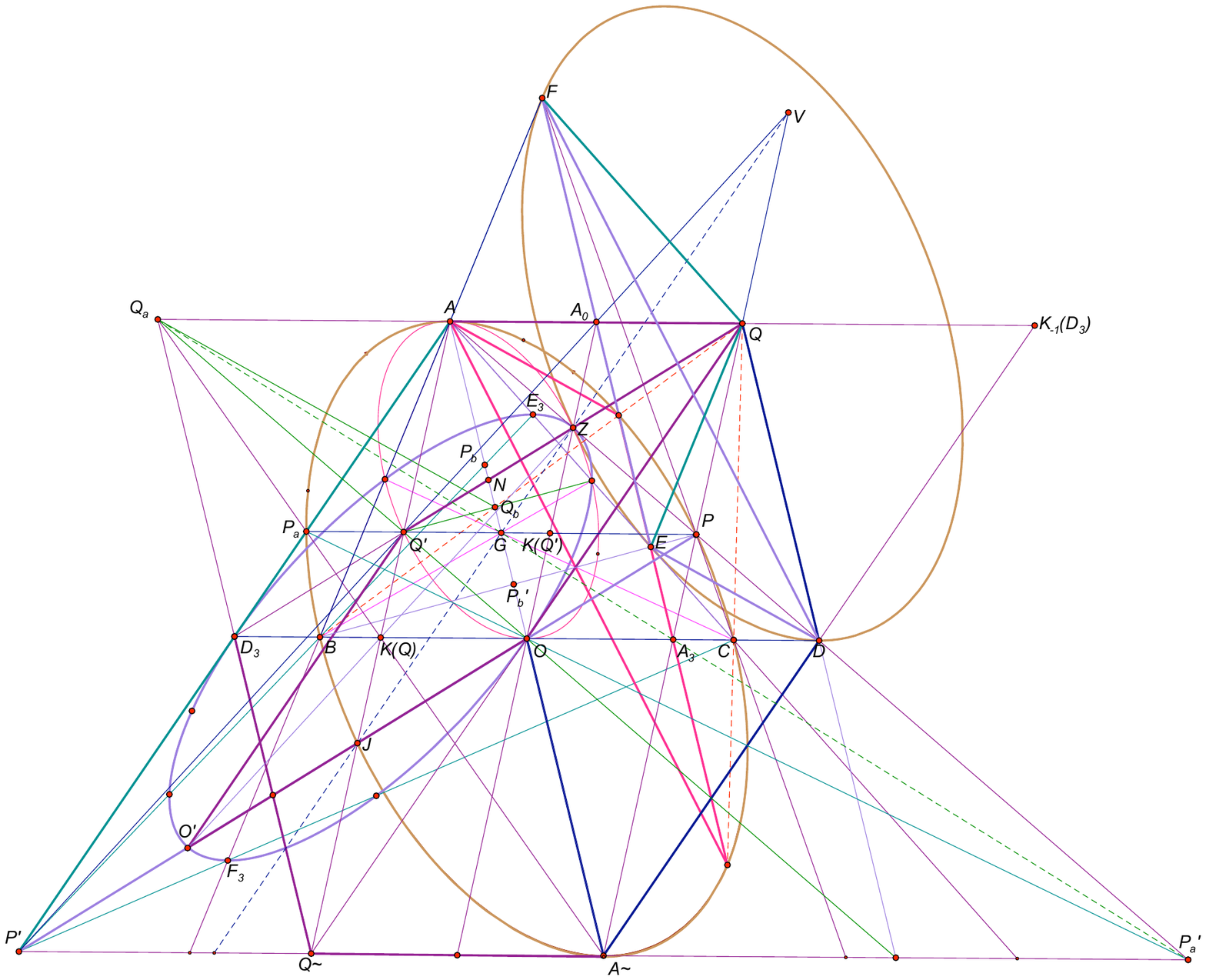}\]
\caption{The case $H=A, O=D_0$.}
\label{fig:4.2}
\end{figure}

There are many interesting relationships in the diagram of Figure \ref{fig:4.2}.  We point out several of these relationships in the following corollary.

\begin{cor}
\label{cor:HArel} Assume the hypotheses of Proposition \ref{prop:HA}.
\begin{enumerate}[a)]
\item If $Q_a$ is the vertex of the anticevian triangle of $Q$ (with respect to $ABC$) opposite the point $A$, then the corresponding point $P_a$ is the second point of intersection of the line $PG$ with $\tilde{\C}_O$.
\item The point $A_3=T_P(D_3)$ is the midpoint of segment $OD$ and $P$ is the centroid of triangle $ODQ$.
\item The ratio $\frac{OD}{OC}=\sqrt{2}$.
\end{enumerate}
\end{cor}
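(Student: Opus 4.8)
My plan is to translate the two hypotheses of Proposition~\ref{prop:HA} into algebra, read off (c) and (b) almost mechanically, and then spend the real effort on the parallelism hidden in (a). Write $P=(\alpha:\beta:\gamma)$ in normalized barycentrics. The hypothesis that $D_3$ is the midpoint of $AP'$, together with I, Corollary 2.6 (exactly as in the proof of Proposition~\ref{prop:HA}), forces $K(Q)$ to be the midpoint of $D_0D_3$; since $D_3$ is the isotomic reflection of $D$ in $O=D_0$, this is equivalent to the parallelogram relation $\vec{AQ}=\vec{OD}$, i.e. $Q=A+D-O$. Comparing with the isotomcomplement formula $Q=(\alpha(\beta+\gamma):\beta(\gamma+\alpha):\gamma(\alpha+\beta))$ gives the single relation $2\beta\gamma+\alpha(\beta+\gamma)=0$. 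The hypothesis $H=A$ is, by the opening lemma of this section, equivalent to $QE\parallel AB$ and $QF\parallel AC$; translating either parallelism yields $\alpha^2=\alpha(\beta+\gamma)+\beta\gamma$. Solving the two relations forces $\alpha=\tfrac13$ (so $P$ lies on the line $l_G$ through $G$ parallel to $BC$, recovering the last line of Proposition~\ref{prop:HA}), $\beta+\gamma=\tfrac23$, and $\beta\gamma=-\tfrac19$. Part (c) is then immediate: $O=D_0$ and $C$ lie on $BC$, so $OD/OC$ is a ratio of collinear segments equal to $|\gamma-\beta|/(\beta+\gamma)$, and $(\gamma-\beta)^2=(\beta+\gamma)^2-4\beta\gamma=\tfrac89$ gives $OD/OC=\sqrt2$.

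For part (b) I would first record $P=\tfrac13(A+2D)$: Proposition~\ref{prop:HA} gives $\vec{OP'}=-3\vec{OP}$, while $D_3=2O-D$ being the midpoint of $AP'$ gives $P'=2D_3-A=4O-2D-A$; equating this with $P'=4O-3P$ forces $3P=A+2D$. Hence the centroid of $O,D,Q$ is $\tfrac13(O+D+Q)=\tfrac13(A+2D)=P$, using $Q=A+D-O$. For the midpoint statement I would write $D_3=A+O-Q$ (from $D_3=2O-D$ and $Q=A+D-O$) and apply $T_P$, which sends $A\mapsto D$, fixes $Q$ (I, Theorem 2.4), and sends $O=D_0$ to the midpoint of $EF$; because $AFQE$ is a parallelogram (again the opening lemma), that midpoint coincides with the midpoint of $AQ$. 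Substituting and using $A-Q=O-D$ gives $A_3=T_P(D_3)=\tfrac12(O+D)$.

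Part (a) is the crux. That $P_a\in\tilde{\C}_O$ is short: $A=QQ_a\cdot Q_bQ_c$ places $Q_a$ on the line $QA$, which is $K^{-1}(BC)$ (the parallel to $BC$ through $A$, since $Q\in K^{-1}(BC)$), so $\iota(P_a)=K^{-1}(Q_a)\in K^{-2}(BC)=l$ and therefore $P_a\in\iota(l)=\tilde{\C}_O$ by Proposition~\ref{prop:HA}. The genuine obstacle is $P_a\in l_G$, equivalently $PP_a\parallel BC$: this is not a bare ratio but requires matching the isotomic parametrization of $\tilde{\C}_O$ against the affine structure of $l$. I would argue that $Q_a$ and $Q$ are harmonic conjugates with respect to $A$ and the trace of $AQ$ on $BC$, which is the point at infinity $\infty_{BC}$ because $AQ\parallel BC$; hence $A$ is the midpoint of $QQ_a$, and applying $K^{-1}$ shows $\tilde A=K^{-1}(A)$ is the midpoint of $P'P'_a$ on $l$, where $P'_a=K^{-1}(Q_a)$. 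Finally $\iota|_l:l\to\tilde{\C}_O$ is a projectivity carrying the reflection of $l$ in $\tilde A$ (fixed points $\tilde A$ and $\infty_{BC}=\iota^{-1}(A)$) to the involution of $\tilde{\C}_O$ whose chords are parallel to $BC$ (the diameter involution fixing $\tilde A$ and its antipode $A$, since the tangent to $\tilde{\C}_O$ at $\tilde A$ is $l\parallel BC$ by Corollary~\ref{cor:tangent}); thus $P_a=\iota(P'_a)$ is the $BC$-parallel partner of $P=\iota(P')$, so $PP_a\parallel BC$ and $P_a\in l_G$. Being a second intersection of $PG$ with $\tilde{\C}_O$ distinct from $P$, it is the asserted point.

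The step I expect to be the real difficulty is precisely this parallelism in (a), because it forces one to track the interaction of $\iota$, $K$, and the pole--polar structure of $\tilde{\C}_O$ rather than mere ratios. As a safeguard I would keep in reserve the observation that every assertion in the corollary is affine (ratios of collinear or parallel segments, incidences, midpoints, and centroids) and that the hypotheses pin $P$ down up to affine equivalence and the $B\leftrightarrow C$ reflection; consequently each part may instead be verified once in the explicit equilateral model of Lemma~\ref{lem:equilateral}, where $O=D_0$, $D_0D=\sqrt2$, $D_0C=1$, and both $P$ and $P_a$ are computed directly on $l_G$.
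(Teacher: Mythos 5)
Your proposal is correct, but it reaches the corollary by a genuinely different route from the paper's. You convert the two hypotheses into the relations $Q=A+D-O$ and $\alpha^2=\alpha(\beta+\gamma)+\beta\gamma$ (the latter is exactly the equation $xy+xz+yz=x^2$ of $\overline{\C}_A$ that the authors record after the corollary), solve to get $\alpha=\tfrac13$, $\beta+\gamma=\tfrac23$, $\beta\gamma=-\tfrac19$, and then parts (b) and (c) fall out of short affine-combination computations. The paper instead proves (b) by a chain of synthetic steps (showing $P$ lies on $Q\tilde A$, transporting the parallelogram $AD_3OQ$ by $T_P$, and using the dilatation $\textsf{H}=K\textsf{R}_O$ to identify $A_0$ as the midpoint of $AQ$), and proves (c) by mapping to the equilateral configuration of Lemma 4.3 --- the fallback you hold in reserve is literally the paper's method for (c). Your observation that $A_0$, the midpoint of $EF$, is also the midpoint of $AQ$ because $AFQE$ is a parallelogram is a clean shortcut the paper arrives at more laboriously. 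For part (a) the divergence is greatest: the paper notes that $P'_a$ is the reflection of $P'$ in $\tilde A$ and concludes that the entire $P_a$-picture is the image of the $P$-picture under the affine reflection about $AG$ in the direction of $BC$, which fixes $\tilde{\C}_O$ and moves $P$ parallel to $BC$; you instead transport the reflection of $l$ in $\tilde A$ through the parametrization $\iota|_l\colon l\to\tilde{\C}_O$ and identify the resulting involution of $\tilde{\C}_O$ (fixed points $A$ and $\tilde A$, tangent $l$ at $\tilde A$) as the one whose chords are parallel to $BC$. The two arguments produce the same involution of $\tilde{\C}_O$, but yours leans on the fact that the isotomic map restricted to a line is a cross-ratio-preserving parametrization of the image circumconic --- a standard property of quadratic transformations, but one the paper never establishes (it cites Part IV only for $\iota(l)$ being a circumconic) --- whereas the affine-reflection argument needs nothing beyond what is already on the page. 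Both proofs are sound; yours makes (b) and (c) essentially mechanical at the price of importing that one projective ingredient into (a).
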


\begin{proof}
The anticevian triangle of $Q$ with respect to $ABC$ is the triangle $T_{P'}^{-1}(ABC)=Q_aQ_bQ_c$.  (See I, Cor. 3.11 and Section 2 above.)  Since $D_3$ is the midpoint of $AP'$, this gives that $T_{P'}^{-1}(D_3)=A$ is the midpoint of $T_{P'}^{-1}(AP')=Q_aQ$.  Therefore, $Q_a$ lies on the line $AQ=K^{-1}(BC)$, so $P_a'=K^{-1}(Q_a)$ lies on the line $l$ and is the reflection of $P'$ in the point $\tilde{A}$.  Thus, the picture for the point $P_a$ is obtained from the picture for $P$ by performing an affine reflection about the line $AG=A\tilde{A}$ in the direction of the line $BC$.  This shows that $P_a$ also lies on the line $PG \pa BC$.  The conic $\tilde{\C}_O$ only depends on $O$, so this reflection takes $\tilde{\C}_O$ to itself.  This proves a). \smallskip

To prove b) we first show that $P$ lies on the line $Q\tilde{A}$.  Note that the segment $K(P'\tilde{A})=AQ$ is half the length of $P'\tilde{A}$, so $P'\tilde{A} \cong Q_aQ$.  Hence, $Q_aQ\tilde{A}P'$ is a parallelogram, so $Q\tilde{A} \cong Q_aP'$.  Suppose that $Q\tilde{A}$ intersects line $PP'$ in a point $X$.  From the fact that $K(Q)$ is the midpoint of $D_3D_0$ we know that $Q$ is the midpoint of $K^{-1}(D_3)A$.  Also, $D_3Q'$ lies on the point $\lambda(A)=\lambda(H)=Q$, by II, Theorem 3.4(b) and Theorem \ref{thm:lambda} of this paper.  It follows that $K^{-1}(D_3), P, P'$ are collinear and $K^{-1}(D_3)QX \sim P'\tilde{A}X$, with similarity ratio $1/2$, since $K^{-1}(D_3)Q$ has half the length of $P'\tilde{A}$.  Hence $d(X, K^{-1}(D_3)) = \frac{1}{2} d(X, P')$.  On the other hand, $d(O,P) = \frac{1}{3} d(O,P')$, whence it follows, since $O$ is halfway between $P'$ and $K^{-1}(D_3)$ on line $BC$, that $d(P, K^{-1}(D_3)) =\frac{1}{2} d(P, P')$.  Therefore, $X=P$ and $P$ lies on $Q\tilde{A}$.  \smallskip

Now, $\textsf{P}=AD_3OQ$ is a parallelogram, since $K(AP')=OQ$, so opposite sides in $AD_3OQ$ are parallel.  Hence, $T_P(\textsf{P})=DA_3A_0Q$ is a parallelogram, whose side $A_3A_0$ lies on the line $EF$.  Applying the dilatation $\textsf{H}=K\textsf{R}_O$ (with center $H=A$) to the collinear points $Q, P, \tilde{A}$ shows that $\textsf{H}(Q), Z$, and $O$ are collinear.  On the other hand, $O=D_0, Z$, and $A_0$ are collinear by I, Corollary 2.6 (since $Z=R$ is the midpoint of $AP$), and $A_0$ lies on $AQ$ by I, Theorem 2.4.  This implies that $A_0=\textsf{H}(Q)=AQ \cdot OZ$ is the midpoint of segment $AQ$, and therefore $A_3$ is the midpoint of segment $OD$.  Since $P$ lies on the line $PG$, $2/3$ of the way from the vertex $Q$ of $ODQ$ to the opposite side $OD$, and lies on the median $QA_3$, it must be the centroid of $ODQ$.  This proves b). \smallskip

To prove c), we apply an affine map taking $ABC$ to an equilateral triangle.  It is clear that such a map preserves all the relationships in Figure 4.  Thus we may assume $ABC$ is an equilateral triangle whose sidelengths are $2$. By Lemma \ref{lem:equilateral} there is a point $P$ for which $AP \cdot BC=D$ with $D_0D=\sqrt{2}, O=D_0$, and $D_3$ the midpoint of $AP'$.  Now Proposition \ref{prop:HA} implies the result, since the equilateral diagram has to map back to one of the two possible diagrams (Figure 4) for the original triangle.
\end{proof}

By Proposition \ref{prop:HA} and Theorem \ref{thm:Pabc} we know that the conic $\overline{\C}_A$ lies on the points $P_1, P_2, P_3, P_4$, where $P_1$ and $P_2=(P_1)_a$ are the points in the intersection $\tilde{\C}_O \cap l_G$ described in Proposition \ref{prop:HA} and Corollary \ref{cor:HArel}, and $P_3=(P_1)_b, P_4=(P_1)_c$.  (See Figure \ref{fig:locus}.)  It can be shown that the equation of the conic $\overline{\C}_A$ in terms of the barycentric coordinates of the point $P=(x,y,z)$ is $xy+xz+yz=x^2$.  Furthermore, the center of $\bar \C_A$ lies on the median $AG$, $6/7$-ths of the way from $A$ to $D_0$.  \medskip

\noindent {\bf Remarks.}
\noindent 1. The polar of $A$ with respect to the conic $\bar \C_A$ is the line $l_G$ through $G$ parallel to $BC$.  This because the quadrangle $BCE_0F_0$ is inscribed in $\bar \C_A$, so its diagonal triangle, whose vertices are $A, G$, and $BC \cdot \l_\infty$, is self-polar. Thus, the polar of $A$ is the line $l_G$. \smallskip

\noindent 2. The two points $P$ in the intersection $\bar \C_A \cap l_G$ have tangents which go through $A$.  This follows from the first remark, since these points lie on the polar $a=l_G$ of $A$ with respect to $\bar \C_A$.  As a result, the points $D$ on $BC$, for which there is a point $P$ on $AD$ satisfying $H = A$, have the property that the ratio of unsigned lengths $DD_0/D_0C \le \sqrt 2$.  This follows from the fact that $\bar \C_A$ is an ellipse: since it is an ellipse for the equilateral triangle, it must be an ellipse for any triangle.  Then the maximal ratio $DD_0/D_0C$ occurs at the tangents to $\bar \C_A$ from $A$; and we showed above that for these two points $P$, $D = AP \cdot BC$ satisfies $DD_0/D_0C = \sqrt 2$.

\end{section}

\noindent Dept. of Mathematics, Carney Hall\\
Boston College\\
140 Commonwealth Ave., Chestnut Hill, Massachusetts, 02467-3806\\
{\it e-mail}: igor.minevich@bc.edu
\bigskip

\noindent Dept. of Mathematical Sciences\\
Indiana University - Purdue University at Indianapolis (IUPUI)\\
402 N. Blackford St., Indianapolis, Indiana, 46202\\
{\it e-mail}: pmorton@math.iupui.edu

\end{document}